\newtheorem{thm}{Theorem}
\newtheorem{prop}{Proposition}
\newtheorem{lemma}{Lemma}
\newtheorem{cor}{Corollary}
\newtheorem*{remark}{Remark}
\DeclareMathOperator*{\argmin}{argmin}
\DeclareMathOperator*{\argmax}{argmax}
\begin{document}
 
\title{Isotonic regression in general dimensions}

\author{Qiyang Han\footnote{University of Washington.  Email: royhan@uw.edu}, Tengyao Wang\footnote{University of Cambridge.  Email: t.wang@statslab.cam.ac.uk}, \\ Sabyasachi Chatterjee\footnote{University of Chicago and University of Illinois at Urbana-Champaign.  Email: sc1706@illinois.edu} and Richard J. Samworth\footnote{University of Cambridge.  Email: r.samworth@statslab.cam.ac.uk}}

\maketitle

\begin{abstract}
We study the least squares regression function estimator over the class of real-valued functions on $[0,1]^d$ that are increasing in each coordinate.  For uniformly bounded signals and with a fixed, cubic lattice design, we establish that the estimator achieves the minimax rate of order $n^{-\min\{2/(d+2),1/d\}}$ in the empirical $L_2$ loss, up to poly-logarithmic factors.  Further, we prove a sharp oracle inequality, which reveals in particular that when the true regression function is piecewise constant on $k$ hyperrectangles, the least squares estimator enjoys a faster, adaptive rate of convergence of $(k/n)^{\min(1,2/d)}$, again up to poly-logarithmic factors.  Previous results are confined to the case $d \leq 2$.  Finally, we establish corresponding bounds (which are new even in the case $d=2$) in the more challenging random design setting.  There are two surprising features of these results: first, they demonstrate that it is possible for a global empirical risk minimisation procedure to be rate optimal up to poly-logarithmic factors even when the corresponding entropy integral for the function class diverges rapidly; second, they indicate that the adaptation rate for shape-constrained estimators can be strictly worse than the parametric rate.
\end{abstract}

\section{Introduction}
\label{Sec:Intro}

Isotonic regression is perhaps the simplest form of shape-constrained estimation problem, and has wide applications in a number of fields.  For instance, in medicine, the expression of a leukaemia antigen has been modelled as a monotone function of white blood cell count and DNA index \citep{SchellSingh1997}, while in education, isotonic regression has been used to investigate the dependence of college grade point average on high school ranking and standardised test results \citep{DykstraRobertson1982}.  It is often generally accepted that genetic effects on phenotypes such as height, fitness or disease are monotone \citep{Manietal2007,RLA2009,LRS2012}, but additive structures have been found to be inadequate in several instances \citep{Shaoetal2008,Goldstein2009,Eichleretal2010}.  Alternative simplifying interaction structures have also been considered, including those based on products \citep{ElenaLenski1997}, logarithms \citep{SanjuanElena2006} and minima \citep{Tongetal2001}, but the form of genetic interaction between factors is not always clear and may vary between phenotypes \citep{LRS2012}.  

A simple class of isotonic functions, which includes all of the above structures as special cases, is the class of block increasing functions
\[
\mathcal{F}_d := \bigl\{\text{$f: [0,1]^d \to \mathbb{R}$, $f(x_1,\ldots,x_d) \leq f(x'_1,\ldots,x'_d)$  when $x_j\leq x'_j$ for $j = 1,\ldots,d$}\bigr\}.
\]
In this paper, we suppose that we observe data $(X_1,Y_1),\ldots,(X_n,Y_n)$, with $n \geq 2$, satisfying
\begin{equation}
\label{Eq:Model}
  Y_i = f_0(X_i) + \epsilon_i,\quad i=1,\ldots,n,
\end{equation}
where $f_0:[0,1]^d \rightarrow \mathbb{R}$ is Borel measurable, $\epsilon_1,\ldots,\epsilon_n$ are independent $N(0,1)$ noise, and the covariates $X_1,\ldots,X_n$, which take values in the set $[0,1]^d$, can either be fixed or random.  Our goal is to study the performance of the least squares isotonic regression estimator $\hat{f}_n \in \argmin_{f\in\mathcal{F}_d} \sum_{i=1}^n \{Y_i - f(X_i)\}^2$ in terms of its empirical risk
\begin{equation}\label{Eq:Loss}
  R(\hat{f}_n, f_0) := \mathbb{E} \biggl[\frac{1}{n}\sum_{i=1}^n \{\hat{f}_n(X_i) - f_0(X_i)\}^2\biggr].
\end{equation}
Note that this loss function only considers the errors made at the design points $X_1,\ldots,X_n$, and these design points naturally induce a directed acyclic graph $G_X = (V(G_X),E(G_X))$ with $V(G_X) = \{1,\ldots,n\}$ and $E(G_X) = \{(i,i') : (X_{i})_{j}\leq (X_{i'})_{j}\; \forall\, j=1,\ldots,d\}$.  It is therefore natural to restate the problem in terms of isotonic vector estimation on directed acyclic graphs. Recall that given a directed acyclic graph $G = (V(G),E(G))$, we may define a partially ordered set $(V(G),\leq)$, where $u\leq v$ if and only if there exists a directed path from $u$ to $v$. We define the class of isotonic vectors on $G$ by
\[
 \mathcal{M}(G) := \{\theta \in \mathbb{R}^{V(G)} : \theta_u\leq \theta_v \text{ for all $u\leq v$}\}. 
\]
Hence, for a signal vector $\theta_0 = ((\theta_0)_i)_{i=1}^n := (f_0(X_i))_{i=1}^n \in \mathcal{M}(G_X)$, the least squares estimator $\hat\theta_n = ((\hat\theta_n)_i)_{i=1}^n := (\hat{f}_n(X_i))_{i=1}^n$ can be seen as the projection of $(Y_i)_{i=1}^n$ onto the polyhedral convex cone $\mathcal{M}(G_X)$. Such a geometric interpretation means that least squares estimators for isotonic regression, in general dimensions or on generic directed acyclic graphs, can be efficiently computed using convex optimisation algorithms (see, e.g., \citet{Dykstra1983, KRS2015, Stout2015}).

In the special case where $d = 1$, model \eqref{Eq:Model} reduces to the univariate isotonic regression problem that has a long history \citep[e.g.][]{Brunk1955, vanEeden1958, BBBB1972, vandeGeer1990, vandeGeer1993, Donoho1991, BirgeMassart1993, MeyerWoodroofe2000, Durot2007, Durot2008, YangBarber2017}.  See \citet{GroeneboomJongbloed2014} for a general introduction.  Since the risk only depends on the ordering of the design points in the univariate case, fixed and random designs are equivalent for $d = 1$ under the empirical risk function (\ref{Eq:Loss}).  It is customary to write $R(\hat\theta_n,\theta_0)$ in place of $R(\hat f_n, f_0)$ for model~\eqref{Eq:Model} with fixed design points.  When $(\theta_0)_1\leq\cdots\leq(\theta_0)_n$ (i.e.\ $X_1\leq \cdots\leq X_n$), \citet{Zhang2002} proved that there exists a universal constant $C > 0$ such that
\begin{equation}
\label{Eq:WorstCase1d}
  R(\hat\theta_n, \theta_0) \leq C\biggl\{\biggl(\frac{(\theta_0)_n-(\theta_0)_1}{n}\biggr)^{2/3} + \frac{\log n}{n}\biggr\},
\end{equation}
which shows in particular that the risk of the least squares estimator is no worse than $O(n^{-2/3})$ for signals $\theta_0$ of bounded uniform norm.  In recent years, there has been considerable interest and progress in studying the automatic rate-adaptation phenomenon of shape-constrained estimators.  This line of study was pioneered by \citet{Zhang2002} in the context of univariate isotonic regression, followed by \citet{ChatterjeeGuntuboyinaSen2015} and most recently \citet{Bellec2017}, who proved that 
\begin{equation}
\label{Eq:Oracle1d}
  R(\hat\theta_n, \theta_0) \leq \inf_{\theta\in\mathcal{M}(G_X)}\biggl\{\frac{\|\theta-\theta_0\|_2^2}{n} + \frac{k(\theta)}{n}\log \biggl(\frac{en}{k(\theta)}\biggr)\biggr\},
\end{equation}
where $k(\theta)$ is the number of constant pieces in the isotonic vector $\theta$.  The inequality~\eqref{Eq:Oracle1d} is often called a \emph{sharp oracle inequality}, with the sharpness referring to the fact that the approximation error term $n^{-1}\|\theta_0-\theta\|_2^2$ has leading constant 1.  The bound~\eqref{Eq:Oracle1d} shows nearly parametric adaptation of the least squares estimator in univariate isotonic regression when the underlying signal has a bounded number of constant pieces.  Other examples of adaptation in univariate shape-constrained problems include the maximum likelihood estimator of a log-concave density \citep{KGS2017}, and the least squares estimator in unimodal regression \citep{ChatterjeeLafferty2017}.

Much less is known about the rate of convergence of the least squares estimator in the model~\eqref{Eq:Model}, or indeed the adaptation phenomenon in shape-restricted problems more generally, in multivariate settings. The only work of which we are aware in the isotonic regression case is \citet{ChatterjeeGuntuboyinaSen2017}, which deals with the fixed, lattice design case when $d = 2$.  For a general dimension $d$, and for $n_1,\ldots,n_d \in \mathbb{N}$, we define this lattice by $\mathbb{L}_{d,n_1,\ldots,n_d} := \prod_{j=1}^d \{1,\ldots,n_j\}$; when $n_1 = \ldots = n_d = n^{1/d}$ for some $n \in \mathbb{N}$, we also write $\mathbb{L}_{d,n} := \mathbb{L}_{d,n_1,\ldots,n_d}$ as shorthand.  When $\{X_1,\ldots,X_n\} = \mathbb{L}_{2,n_1,n_2}$, \citet{ChatterjeeGuntuboyinaSen2017} showed that there exists a universal constant $C > 0$ such that 
\begin{equation}
  \label{Eq:WorstCase2d}
  R(\hat\theta_n,\theta_0)\leq C\biggl\{\frac{((\theta_0)_{n_1,n_2}-(\theta_0)_{1,1})\log^4 n}{n^{1/2}} + \frac{\log^8 n}{n}\biggr\};
\end{equation}
with a corresponding minimax lower bound of order $n^{-1/2}$.  They also provided a sharp oracle inequality of the form
\begin{equation}
 \label{Eq:Oracle2d}
 R(\hat\theta_n, \theta_0) \leq \inf_{\theta\in\mathcal{M}(\mathbb{L}_{2,n_1,n_2})}\biggl(\frac{\|\theta-\theta_0\|_2^2}{n} + \frac{Ck(\theta)\log^8 n}{n}\biggr),
\end{equation}
where $k(\theta)$ is the minimal number of rectangular blocks into which $\mathbb{L}_{2,n_1,n_2}$ may be partitioned such that $\theta_0$ is constant on each rectangular block. 

A separate line of work has generalised the univariate isotonic regression problem to multivariate settings by assuming an additive structure (see e.g.\ \citet{Bacchetti1989, MortonJonesetal2000, MammenYu2007, ChenSamworth2016}). In the simplest setting, these works investigate the regression problem~\eqref{Eq:Model}, where the signal $f_0$ belongs to 
\[
 \mathcal{F}_d^{\mathrm{add}} := \biggl\{f\in\mathcal{F}_d : f(x_1,\ldots,x_d) = \sum_{j=1}^d f_j(x_j), f_j\in\mathcal{F}_1, \|f_j\|_\infty \leq 1\biggr\}.
\]
The additive structure greatly reduces the complexity of the class; indeed, it can be shown that the least squares estimator over $\mathcal{F}_d^{\mathrm{add}}$ attains the univariate risk $n^{-2/3}$, up to multiplicative constants depending on $d$ \citep[e.g.][Theorem~9.1]{vandeGeer2000}.

The main contribution of this paper is to provide risk bounds for the isotonic least squares estimator when $d\geq 3$, both from a worst-case perspective and an adaptation point of view.  Specifically, we show that in the fixed lattice design case, the least squares estimator satisfies 
\begin{equation}
 \label{Eq:WorstCase3d}
 \sup_{\theta_0 \in \mathcal{M}(\mathbb{L}_{d,n}), \|\theta_0\|_\infty \leq 1} R(\hat{\theta}_n,\theta_0) \leq Cn^{-1/d}\log^4 n,
\end{equation}
for some universal constant $C > 0$. This rate turns out to be the minimax risk up to poly-logarithmic factors in this problem. Furthermore, we establish a sharp oracle inequality: there exists a universal constant $C > 0$ such that for every $\theta_0 \in \mathbb{R}^{\mathbb{L}_{d,n}}$,
\begin{equation}
\label{Eq:Oracle3d}
 R(\hat{\theta}_n,\theta_0) \leq \inf_{\theta\in\mathcal{M}(\mathbb{L}_{d,n})}\biggl\{\frac{\|\theta-\theta_0\|_2^2}{n} + C\biggl(\frac{k(\theta)}{n}\biggr)^{2/d}\log^8 \biggl(\frac{en}{k(\theta)}\biggr)\biggr\},
\end{equation}
where $k(\theta)$ is the number of constant hyperrectangular pieces in $\theta$. This reveals an adaptation rate of nearly $(k/n)^{2/d}$ for signals that are close to an element of $\mathcal{M}(\mathbb{L}_{d,n})$ that has at most $k$ hyperrectangular blocks.  A corresponding lower bound is also provided, showing that the least squares estimator cannot adapt faster than the $n^{-2/d}$ rate implied by~\eqref{Eq:Oracle3d} even for constant signal vectors.  We further demonstrate that the worst-case bounds and oracle inequalities~\eqref{Eq:WorstCase3d} and~\eqref{Eq:Oracle3d}, with slightly different poly-logarithmic exponents, remain valid for random design points $X_1,\ldots,X_n$ sampled independently from a distribution on $[0,1]^d$ with a Lebesgue density bounded away from $0$ and $\infty$. The results in the case of random design are novel even for dimension $d = 2$. These results are surprising in particular with regard to the following two aspects:
\begin{enumerate}
\item The negative results of \citet{BirgeMassart1993} have spawned a heuristic belief that one should not use global empirical risk minimisation procedures\footnote{The term `global' refers here to procedures that involve minimisation over the entire function class, as opposed to only over a sieve; cf. \citet{vandeGeer2000}.} when the entropy integral for the corresponding function class diverges (e.g.\ \citet[p.~121--122]{vandeGeer2000}, \citet{RakhlinSridharanTsybakov2017}).  It is therefore of particular interest to see that in our isotonic regression function setting, the global least squares estimator is still rate optimal (up to poly-logarithmic factors).  See also the discussion after Corollary~\ref{Cor:Fixed}.


 \item Sharp adaptive behaviour for shape-constrained estimators has previously only been shown when the adaptive rate is nearly parametric (see, e.g., \citet{GuntuboyinaSen2015,ChatterjeeGuntuboyinaSen2015,Bellec2017,KGS2017}). On the other hand, our results here show that the least squares estimator in the $d$-dimensional isotonic regression problem necessarily adapts at a strictly nonparametric rate. Clearly, the minimax optimal rate for constant functions is parametric. Hence, the least squares estimator in this problem adapts at a strictly suboptimal rate while at the same time being nearly rate optimal from a worst-case perspective.
\end{enumerate}
In both the fixed lattice design and the more challenging random design cases, our analyses are based on a novel combination of techniques from empirical process theory, convex geometry and combinatorics.  We hope these methods can serve as a useful starting point towards understanding the behaviour of estimators in other multivariate shape-restricted models.

The rest of the paper is organised as follows. In Section~\ref{Sec:FixedDesign}, we state the main results for the fixed lattice design model. Section~\ref{Sec:RandomDesign} describes corresponding results in the random design case. Proofs of all main theoretical results are contained in Sections~\ref{Sec:ProofFixed} and~\ref{Sec:ProofRandom}, whereas proofs of ancillary results are deferred until Section~\ref{Sec:Appendix}.

\subsection{Notation}
For a real-valued measurable function $f$ defined on a probability space $(\mathcal{X},\mathcal{A},P)$ and for $p \in [1,\infty)$, we let $\|f\|_{L_p(P)} := \big(P |f|^p)^{1/p}$ denote the usual $L_p(P)$-norm, and write $\|f\|_\infty := \sup_{x \in \mathcal{X}} |f(x)|$.  For $r\geq 0$, we write $B_p(r, P):=\{f:\mathcal{X}\to \mathbb{R}, \|f\|_{L_p(P)}\leq r\}$ and $B_\infty(r) := \{f: \mathcal{X}\to\mathbb{R}, \|f\|_\infty\leq r\}$. We will abuse notation slightly and also write $B_p(r) := \{v\in\mathbb{R}^n: \|v\|_p \leq r\}$ for $p \in [1,\infty]$. The Euclidean inner product on $\mathbb{R}^d$ is denoted by $\langle \cdot , \cdot \rangle$.  For $x,y\in\mathbb{R}^{d}$, we write $x\preceq y$ if $x_j\leq y_j$ for all $j=1,\ldots,d$.

For $\varepsilon > 0$, the $\varepsilon$-\emph{covering number} of a (semi-)normed space $(\mathcal{F}, \|\cdot\|)$, denoted $N\bigl(\varepsilon, \mathcal{F}, \|\cdot\|\bigr)$, is the smallest number of closed $\varepsilon$-balls whose union covers $\mathcal{F}$. The $\varepsilon$-\emph{bracketing number}, denoted $N_{[\,]}(\varepsilon,\mathcal{F},\|\cdot\|)$, is the smallest number of $\varepsilon$-brackets, of the form $[l, u] := \{f\in\mathcal{F}: l \leq f\leq u\}$ such that $\|u-l\| \leq \varepsilon$, and whose union covers $\mathcal{F}$. The \emph{metric/bracketing entropy} is the logarithm of the covering/bracketing number.

Throughout the article $\epsilon_1,\ldots,\epsilon_n$ and $\{\epsilon_w: w \in \mathbb{L}_{d,n_1,\ldots,n_d}\}$ denote independent standard normal random variables and $\xi_1,\ldots,\xi_n$ denote independent Rademacher random variables, both independent of all other random variables. For two probability measures $P$ and $Q$ defined on the same measurable space $(\mathcal{X}, \mathcal{A})$, we write $d_{\mathrm{TV}}(P,Q) := \sup_{A\in \mathcal{A}} |P(A)- Q(A)|$ for their total variation distance, and $d_{\mathrm{KL}}^2(P,Q) := \int_\mathcal{X} \log\frac{\mathrm{d}P}{\mathrm{d}Q}\,\mathrm{d}P$ for their Kullback--Leibler divergence.

We use $c,C$ to denote generic universal positive constants and use $c_x, C_x$ to denote generic positive constants that depend only on $x$. Exact numeric values of these constants may change from line to line unless otherwise specified. Also, $a\lesssim_{x} b$ and $a\gtrsim_x b$ mean $a\leq C_x b$ and $a\geq c_x b$ respectively, and $a\asymp_x b$ means $a\lesssim_{x} b$ and $a\gtrsim_x b$ ($a\lesssim b$ means $a\leq Cb$ for some absolute constant $C$). We also define $\log_+(x) := \log(x\vee e)$.

\section{Fixed lattice design}
\label{Sec:FixedDesign}
In this section, we focus on the model~\eqref{Eq:Model} in the case where the set of design points forms a finite cubic lattice $\mathbb{L}_{d,n}$, defined in the introduction.  In particular, we will assume in this section that $n = n_1^d$ for some $n_1\in\mathbb{N}$. We use the same notation $\mathbb{L}_{d,n}$ both for the set of points and the directed acyclic graph on these points with edge structure arising from the natural partial ordering induced by $\preceq$. Thus, in the case $d=1$, the graph $\mathbb{L}_{1,n}$ is simply a directed path, and this is the classical univariate isotonic regression setting. The case $d = 2$ is studied in detail in \citet{ChatterjeeGuntuboyinaSen2017}. Our main interest lies in the cases $d\geq 3$.  

\subsection{Minimax rate-optimality of least squares estimator}
Our first result provides an upper bound on the risk of the least squares estimator $\hat\theta_n = \hat\theta_n(Y_1,\ldots,Y_n)$ of $\theta_0 \in \mathcal{M}(\mathbb{L}_{d,n})$.
\begin{thm}
 \label{Thm:FixedUpper}
 Let $d\geq 2$. There exists a universal constant $C > 0$ such that
 \[
  \sup_{\theta_0\in\mathcal{M}(\mathbb{L}_{d,n})\cap B_\infty(1)} R(\hat\theta_n, \theta_0) \leq C n^{-1/d}\log^4 n.
 \]
\end{thm}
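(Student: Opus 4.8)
The plan is to combine the standard analysis of least squares over a convex cone with an induction on the dimension $d$. Since $\hat\theta_n$ is the Euclidean projection of $Y=\theta_0+\epsilon$ onto the polyhedral cone $\mathcal{M}(\mathbb{L}_{d,n})$ and $\theta_0$ lies in it, the starting point is the basic inequality $\|\hat\theta_n-\theta_0\|_2^2\le\langle\epsilon,\hat\theta_n-\theta_0\rangle$. A standard peeling argument over the dyadic shells $\{\|\hat\theta_n-\theta_0\|_2\asymp 2^j\}$, together with Gaussian concentration of suprema, then shows that if the localised Gaussian complexity
\[
\Phi_d(t):=\sup_{\theta_0\in\mathcal{M}(\mathbb{L}_{d,n})\cap B_\infty(1)}\mathbb{E}\sup\bigl\{\langle\epsilon,\theta-\theta_0\rangle:\theta\in\mathcal{M}(\mathbb{L}_{d,n}),\ \|\theta-\theta_0\|_2\le t\bigr\}
\]
admits a bound of the form $\Phi_d(t)\lesssim n^{1-1/d}\log^4 n+t\cdot n^{(1-1/d)/2}\log^2 n$ for all $t>0$, then $\sup_{\theta_0\in\mathcal{M}(\mathbb{L}_{d,n})\cap B_\infty(1)}\mathbb{E}\|\hat\theta_n-\theta_0\|_2^2\lesssim n^{1-1/d}\log^4 n$; dividing by $n$ gives the theorem. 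Note that there is no need to truncate $\hat\theta_n$: the boundedness of the signal enters only through the bound on $\Phi_d$ (ultimately through the two-dimensional base case below), exactly as in \citet{ChatterjeeGuntuboyinaSen2017}.

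The obstacle is that the $L_2$ metric entropy of $\mathcal{F}_d\cap B_\infty(1)$ is of order $\varepsilon^{-2(d-1)}$, so Dudley's entropy integral diverges for every $d\ge2$ and diverges polynomially for $d\ge3$; feeding the truncated entropy integral into the above scheme yields only the suboptimal rate $n^{-1/(2(d-1))}$. I would therefore bound $\Phi_d$ by slicing, taking the Chatterjee--Guntuboyina--Sen bound~\eqref{Eq:WorstCase2d} as the base case $d=2$ (where the entropy integral is only logarithmically divergent, so a truncated Dudley argument applies at the cost of the poly-logarithmic factors). Write $n_1:=n^{1/d}$ and $m:=n^{(d-1)/d}$, and decompose $\mathbb{L}_{d,n}=\bigsqcup_{k=1}^{n_1}S_k$ into the $n_1$ slices perpendicular to the $d$th coordinate, each $S_k$ being a copy of $\mathbb{L}_{d-1,m}$. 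Since the restriction to $S_k$ of any $\theta\in\mathcal{M}(\mathbb{L}_{d,n})$ lies in $\mathcal{M}(S_k)$, discarding the between-slice monotonicity can only enlarge the feasible set, so with $g_k(r):=\sup\{\langle\epsilon^{(k)},v\rangle:v\in\mathcal{M}(S_k)-\theta_0^{(k)},\ \|v\|_2\le r\}$ one has
\[
\Phi_d(t)\le\sup_{\theta_0}\mathbb{E}\sup\Bigl\{\textstyle\sum_{k=1}^{n_1}g_k(r_k):\sum_{k=1}^{n_1}r_k^2\le t^2\Bigr\}.
\]

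Each $g_k$ is concave and nondecreasing with $g_k(0)=0$. Fix $r^\star$ with $\Phi_{d-1}(r^\star)\le(r^\star)^2$; by the inductive hypothesis one may take $(r^\star)^2\lesssim m^{1-1/(d-1)}\log^4 m$, and then $\mathbb{E} g_k(r^\star)\le(r^\star)^2$ for every admissible $\theta_0^{(k)}$. The tangent-line inequality gives $g_k(r)\le g_k(r^\star)+g_k'(r^\star)\,r$ for all $r\ge0$, while concavity gives $g_k'(r^\star)\le 2g_k(r^\star)/r^\star$, so that, using Gaussian concentration of $g_k(r^\star)$ (a function of $\epsilon^{(k)}$ that is Lipschitz with constant $r^\star$) and $r^\star\gtrsim1$, one obtains $\mathbb{E}[g_k'(r^\star)^2]\lesssim(r^\star)^2$. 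Substituting and applying Cauchy--Schwarz to the linear part shows that, for every feasible $(r_k)$, $\sum_k g_k(r_k)\le\sum_k g_k(r^\star)+t\bigl(\sum_k g_k'(r^\star)^2\bigr)^{1/2}$; taking expectations (the right-hand side is now deterministic in $(r_k)$) gives $\Phi_d(t)\lesssim n_1(r^\star)^2+t\sqrt{n_1}\,r^\star$. Since $n_1(r^\star)^2\lesssim n^{1/d}\cdot m^{1-1/(d-1)}\log^4 m\asymp n^{1-1/d}\log^4 n$ and $\sqrt{n_1}\,r^\star\asymp n^{(1-1/d)/2}\log^2 n$, this is exactly the bound on $\Phi_d$ required, which closes the induction and completes the proof.

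The step I expect to be the main difficulty — and where the paper's convex-geometric and combinatorial input really matters — is making the slicing reduction quantitatively sharp: discarding the between-slice monotonicity is a genuine loss of information, and one must verify that it costs nothing at the level of rates and essentially nothing at the level of constants. Two points need care. First, the two-dimensional base case itself, where the logarithmically divergent entropy integral must be handled (this is the content of \citet{ChatterjeeGuntuboyinaSen2017} and is itself delicate). Second, controlling the error incurred in passing from $\mathbb{E}\sup$ to $\sup\mathbb{E}$ over the sphere $\{\sum_k r_k^2\le t^2\}$ — the ``fluctuation'' term hidden in the tangent-line estimate above — and showing it is negligible uniformly over the at most $O(\log n)$ levels of the induction, so that no factor depending badly on $d$ accumulates. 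It is worth stressing that the induction must start at $d=2$, not $d=1$: slicing a two-dimensional lattice into one-dimensional paths is too lossy to recover the rate $n^{-1/2}$ from the one-dimensional rate $n^{-2/3}$, which is precisely why the cases $d\le2$ required genuinely different arguments and cannot themselves be obtained from this scheme.
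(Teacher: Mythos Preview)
Your approach is correct in spirit and would yield the rate $n^{-1/d}\log^4 n$, but it differs from the paper's proof in two significant ways, and the paper's route is considerably shorter.

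First, the paper does not induct: it slices $\mathbb{L}_{d,n}$ \emph{directly} into two-dimensional sheets in one step, then uses the subadditivity of the statistical dimension under direct sums \citep[Proposition~3.1]{Amelunxenetal2014} to obtain $\delta(\mathcal{M}(\mathbb{L}_{d,n}))\le n^{1-2/d}\,\delta(\mathcal{M}(\mathbb{L}_{2,n^{1/d},n^{1/d}}))\lesssim n^{1-2/d}\log^8 n$, invoking the two-dimensional bound of \citet{ChatterjeeGuntuboyinaSen2017} only once. Second, and more strikingly, the paper handles localisation without any concavity or tangent-line argument: it exploits the fact that $\mathcal{M}(\mathbb{L}_{d,n})$ is invariant under translation by constant vectors, so one may shift the centre from $\theta_0$ to $\bar\theta_0\mathbf{1}_n$ at a cost of $\|\theta_0-\bar\theta_0\mathbf{1}_n\|_2\le n^{1/2}$ in the radius, after which the supremum is over a cone intersected with a ball and scales linearly. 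This gives $\Phi_d(t)\le(t+n^{1/2})\,w_{\mathcal{M}(\mathbb{L}_{d,n})\cap B_2(1)}$ in one line, and the fixed point is read off immediately.

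What each approach buys: the paper's argument is essentially a two-line reduction once the statistical-dimension machinery is in place, and it delivers a genuinely universal constant (the one-step slice introduces no $d$-dependence). Your induction is more hands-on and does not lean on \citet{Amelunxenetal2014}, but it is longer and, as written, risks accumulating a constant of the form $C^{d}$ through the $d-2$ inductive steps (incidentally, the number of levels is $d-2$, not $O(\log n)$ as you wrote). This would still give the correct rate for each fixed $d$, but would not match the theorem's claim of a universal $C$; the paper's direct-to-2D slice avoids this issue entirely. Your identification of the two-dimensional base case as the genuinely delicate ingredient is exactly right, and both proofs rest on it.
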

Theorem~\ref{Thm:FixedUpper} reveals that, up to a poly-logarithmic factor, the empirical risk of the least squares estimator converges to zero at rate $n^{-1/d}$.  The upper bound in Theorem~\ref{Thm:FixedUpper} is matched, up to poly-logarithmic factors, by the following minimax lower bound. 
\begin{prop}
 \label{Prop:FixedLower}
 There exists a constant $c_d > 0$, depending only on $d$, such that for $d\geq 2$,
 \[
  \inf_{\tilde\theta_n} \sup_{\theta_0\in\mathcal{M}(\mathbb{L}_{d,n}) \cap B_\infty(1)} R(\tilde\theta_n, \theta_0) \geq c_d n^{-1/d},
 \]
 where the infimum is taken over all estimators $\tilde\theta_n = \tilde\theta_n(Y_1,\ldots,Y_n)$ of $\theta_0$.
\end{prop}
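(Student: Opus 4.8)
The plan is to prove Proposition~\ref{Prop:FixedLower} by a standard Assouad-type reduction, the only real content being the identification of the ``hard core'' of the problem: a large antichain of the poset $(\mathbb{L}_{d,n},\preceq)$, on which the isotonicity constraint imposes nothing. Concretely, writing $s(x) := \sum_{j=1}^d x_j$ and $s^\ast := \lfloor d(n_1+1)/2\rfloor$, I would take $A := \{x\in\mathbb{L}_{d,n}: s(x) = s^\ast\}$; since $x\preceq y$ and $x\neq y$ force $s(x) < s(y)$, the set $A$ is an antichain. Fix a small constant $\delta\in(0,1/2]$ and define a base vector $\theta^\circ$ by $\theta^\circ_x = 0$ if $s(x) < s^\ast$, $\theta^\circ_x = 1$ if $s(x) > s^\ast$, and $\theta^\circ_x = 1/2$ on $A$; then for $\eta\in\{-1,+1\}^A$ set $\theta^\eta_x := \theta^\circ_x$ off $A$ and $\theta^\eta_x := \tfrac12 + \delta\eta_x$ on $A$. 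Since the perturbed values lie in $[\tfrac12-\delta,\tfrac12+\delta]\subseteq[0,1]$ and are sandwiched between the surrounding values $0$ and $1$, a direct check shows $\theta^\eta\in\mathcal{M}(\mathbb{L}_{d,n})\cap B_\infty(1)$ for every $\eta$.

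The key structural observation is that, under~\eqref{Eq:Model} with fixed design $\mathbb{L}_{d,n}$, the coordinates $(Y_x)_{x\in\mathbb{L}_{d,n}}$ are independent and the law of $Y_x$ depends on $\eta$ only when $x\in A$ (where $Y_x\sim N(\tfrac12+\delta\eta_x,1)$); the remaining observations are therefore uninformative about $\eta$. Hence for any estimator $\tilde\theta_n$,
\[
  \sup_{\theta_0\in\mathcal{M}(\mathbb{L}_{d,n})\cap B_\infty(1)} R(\tilde\theta_n,\theta_0)\;\geq\;\frac{1}{2^{|A|}}\sum_{\eta}\mathbb{E}_\eta\Bigl[\tfrac1n\|\tilde\theta_n-\theta^\eta\|_2^2\Bigr]\;\geq\;\frac1n\sum_{x\in A}\frac{1}{2^{|A|}}\sum_{\eta}\mathbb{E}_\eta\bigl[(\tilde\theta_{n,x}-\tfrac12-\delta\eta_x)^2\bigr],
\]
and for each $x\in A$ I would average over the coordinates of $\eta$ other than $x$ and compare the two resulting mixtures by Le Cam's two-point bound. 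Because these mixtures differ only through the single coordinate's law $N(\tfrac12\pm\delta,1)$, for which $d_{\mathrm{TV}}\leq\delta$ by Pinsker's inequality, the inner average is at least $\tfrac12\delta^2(1-\delta)$, giving $\sup_{\theta_0}R(\tilde\theta_n,\theta_0)\gtrsim |A|\,\delta^2(1-\delta)/n$.

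It then remains to bound $|A|$ from below, which is the one genuinely combinatorial step (and the closest thing to an obstacle, though it is elementary): $|A|$ equals the number of ways to write $s^\ast$ as an ordered sum of $d$ elements of $\{1,\dots,n_1\}$, and restricting $x_1,\dots,x_{d-1}$ to a window of width $\asymp n_1/d$ around $n_1/2$ forces $x_d = s^\ast - \sum_{j<d}x_j$ into $\{1,\dots,n_1\}$ once $n_1$ is large, so that $|A|\gtrsim_d n_1^{d-1} = n^{(d-1)/d}$ (in fact $|A|\asymp_d n_1^{d-1}$ by a local central limit estimate, but only the lower bound is needed). Taking $\delta = 1/2$ yields $\sup_{\theta_0}R(\tilde\theta_n,\theta_0)\gtrsim_d n^{(d-1)/d}/n = n^{-1/d}$, as required; the finitely many small values of $n_1$ can be absorbed into $c_d$ using the trivial two-point comparison of the constant vectors $0$ and $n^{-1/2}\mathbf{1}$, which already has risk $\gtrsim 1/n$.
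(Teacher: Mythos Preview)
Your proposal is correct and follows essentially the same route as the paper: identify a large antichain in $(\mathbb{L}_{d,n},\preceq)$, build a hypercube of isotonic signals by perturbing the coordinates on that antichain between two levels, and apply an Assouad/Le~Cam two-point bound coordinatewise. The only cosmetic differences are that the paper (i) packages the Assouad step as a general result for arbitrary directed acyclic graphs (Proposition~\ref{Prop:AntichainLowerBound}) before specialising, and (ii) uses the ``corner'' level set $W=\{w:\sum_j w_j=n_1\}$ rather than your middle layer $A=\{x:\sum_j x_j=s^\ast\}$; the corner choice has the minor advantage that $|W|=\binom{n_1-1}{d-1}$ is given exactly by stars-and-bars, avoiding your windowing argument and the separate treatment of small $n_1$.
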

From Theorem~\ref{Thm:FixedUpper} and Proposition~\ref{Prop:FixedLower}, together with existing results mentioned in the introduction for the case $d=1$, we see that the worst-case risk $n^{-\min\{2/(d+2),1/d\}}$ (up to poly-logarithmic factors) of the least squares estimator exhibits different rates of convergence in dimension $d=1$ and dimensions $d\geq 3$, with $d=2$ being a transitional case.  From the proof of Proposition~\ref{Prop:FixedLower}, we see that it is the competition between the cardinality of the maximum chain (totally ordered subset) and the maximum antichain (subset of mutually incomparable design points) that explains the different rates.  Similar transitional behaviour was recently observed by \citet{KimSamworth2016} in the context of log-concave density estimation, though there it is the tension between estimating the density in the interior of its support and estimating the support itself that drives the transition.

The two results above can readily be translated into bounds for the rate of convergence for estimation of a block monotonic function with a fixed lattice design.  Recall that $\mathcal{F}_d$ is the class of block increasing functions.  Suppose that for some $f_0 \in \mathcal{F}_d$, and at each $x = (x_1,\ldots,x_d) \in n_1^{-1}\mathbb{L}_{d,n}$, where $n_1 = n^{1/d}$, we observe $Y(x) \sim N(f_0(x), 1)$ independently. Define $P_n := n^{-1}\sum_{x\in n_1^{-1}\mathbb{L}_{d,n}} \delta_x$ and let $\mathcal{A}$ denote the set of hypercubes of the form $A = \prod_{j=1}^d A_j$, where either $A_j = [0, \frac{1}{n_1}]$ or $A_j = (\frac{i_j-1}{n_1}, \frac{i_j}{n_1}]$ for some $i_j \in \{2,\ldots,n_1\}$.  Now let $\mathcal{H}$ denote the set of functions $f \in \mathcal{F}_d$ that are piecewise constant on each $A \in \mathcal{A}$, and set
\[
\hat f_n:= \argmin_{f \in \mathcal{H}} \frac{1}{n}\sum_{i=1}^n \{Y(x_i) - f(x_i)\}^2.
\]
The following is a fairly straightforward corollary of Theorem~\ref{Thm:FixedUpper} and Proposition~\ref{Prop:FixedLower}.
\begin{cor}
\label{Cor:Fixed}
There exist constants $c_d, C_d > 0$, depending only on $d$, such that for $Q = P_n$ or Lebesgue measure on $[0,1]^d$, we have
 \[
  c_d n^{-1/d} \leq \inf_{\tilde f_n} \sup_{f_0\in\mathcal{F}_d \cap B_\infty(1)} \mathbb{E} \|\tilde f_n - f_0\|^2_{L_2(Q)} \leq \sup_{f_0\in\mathcal{F}_d \cap B_\infty(1)} \mathbb{E} \|\hat f_n- f_0\|^2_{L_2(Q)} \leq C_d n^{-1/d}\log^4 n,
 \]
 where the infimum is taken over all measurable functions of $\{Y(x):x\in n_1^{-1}\mathbb{L}_{d,n}\}$.
\end{cor}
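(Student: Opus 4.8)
The plan is to reduce both statements, for $Q=P_n$ and for $Q$ equal to Lebesgue measure, to the lattice results of Theorem~\ref{Thm:FixedUpper} and Proposition~\ref{Prop:FixedLower}, via the natural identification of $\mathcal{H}$ with $\mathcal{M}(\mathbb{L}_{d,n})$. The first step is to record that restriction to the $n$ lattice points of $n_1^{-1}\mathbb{L}_{d,n}$ defines a bijection from $\mathcal{H}$ onto $\mathcal{M}(\mathbb{L}_{d,n})$: each cube $A\in\mathcal{A}$ contains exactly one such point, namely its top corner $x_A$, and a function that is constant on the cubes of $\mathcal{A}$ lies in $\mathcal{F}_d$ if and only if the associated vector of values lies in $\mathcal{M}(\mathbb{L}_{d,n})$, because $y\preceq y'$ with $y\in A$ and $y'\in A'$ forces $x_A\preceq x_{A'}$. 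Under this identification, $f_0\in\mathcal{F}_d\cap B_\infty(1)$ restricts to $\theta_0:=(f_0(x))_x\in\mathcal{M}(\mathbb{L}_{d,n})\cap B_\infty(1)$, and $\hat f_n$ restricts to the isotonic least squares estimator $\hat\theta_n$ of Theorem~\ref{Thm:FixedUpper}, since the criterion $\sum_i\{Y(x_i)-f(x_i)\}^2$ depends on $f\in\mathcal{H}$ only through its lattice values. For $Q=P_n$ this already settles the corollary, since $\|\tilde f_n-f_0\|_{L_2(P_n)}^2=n^{-1}\|\tilde\theta_n-\theta_0\|_2^2$ exactly for any estimator $\tilde f_n$ with lattice restriction $\tilde\theta_n$; taking $\tilde f_n=\hat f_n$ and applying Theorem~\ref{Thm:FixedUpper} gives the upper bound, while infimising over $\tilde\theta_n$ (which ranges over all estimators of $\theta_0$ as $\tilde f_n$ ranges over all estimators of $f_0$) and applying Proposition~\ref{Prop:FixedLower} gives the lower bound; the middle inequality in the display is automatic.

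For $Q$ equal to Lebesgue measure, I would get the upper bound by writing $\|\hat f_n-f_0\|_{L_2(\mathrm{Leb})}^2=\sum_{A\in\mathcal{A}}\int_A\{\hat\theta_{x_A}-f_0(x)\}^2\,dx$ and using $(a+b)^2\le 2a^2+2b^2$ with $a=\hat\theta_{x_A}-f_0(x_A)$ and $b=f_0(x_A)-f_0(x)$. The $a$-part sums to $2n^{-1}\|\hat\theta_n-\theta_0\|_2^2$ and so has expectation $2R(\hat\theta_n,\theta_0)\le 2Cn^{-1/d}\log^4 n$ by Theorem~\ref{Thm:FixedUpper}. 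The $b$-part is deterministic, and since $0\le f_0(x_A)-f_0(x)\le 2$ for $x\in A$ (by monotonicity of $f_0$ and $\|f_0\|_\infty\le 1$) and $\mathrm{vol}(A)=n^{-1}$, it is at most $2n^{-1}\sum_{A\in\mathcal{A}}\{f_0(x_A)-f_0(x_A^-)\}$, where $x_A^-$ denotes the bottom corner of $A$. Decomposing each nonnegative increment $f_0(x_A)-f_0(x_A^-)$ along a coordinate-wise path from $x_A^-$ to $x_A$ and telescoping one coordinate at a time yields $\sum_{A\in\mathcal{A}}\{f_0(x_A)-f_0(x_A^-)\}\le 2dn_1^{d-1}$, so the $b$-part is $\lesssim_d n^{-1/d}$ and is absorbed into the main term; combining gives the claimed bound $C_d n^{-1/d}\log^4 n$.

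For the Lebesgue lower bound, I would restrict the supremum to $f_0\in\mathcal{H}\cap B_\infty(1)$, which by the bijection above corresponds exactly to $\theta_0\in\mathcal{M}(\mathbb{L}_{d,n})\cap B_\infty(1)$. Given any estimator $\tilde f_n$ (which may be assumed to lie in $L_2(\mathrm{Leb})$, else its risk is infinite and nothing is to prove), form the block-average vector $\bar\theta_n$ with $(\bar\theta_n)_{x_A}:=n\int_A\tilde f_n$, which is again a measurable function of the data. The cubewise identity $\int_A(g-c)^2=\int_A(g-\bar g)^2+\mathrm{vol}(A)(\bar g-c)^2\ge\mathrm{vol}(A)(\bar g-c)^2$, applied with $g=\tilde f_n|_A$ and $c=(\theta_0)_{x_A}$, gives $\|\tilde f_n-f_0\|_{L_2(\mathrm{Leb})}^2\ge n^{-1}\|\bar\theta_n-\theta_0\|_2^2$, hence $\sup_{f_0\in\mathcal{F}_d\cap B_\infty(1)}\mathbb{E}\|\tilde f_n-f_0\|_{L_2(\mathrm{Leb})}^2\ge\sup_{\theta_0\in\mathcal{M}(\mathbb{L}_{d,n})\cap B_\infty(1)}R(\bar\theta_n,\theta_0)$; infimising over $\tilde f_n$ and invoking Proposition~\ref{Prop:FixedLower} completes the argument. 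The only genuinely quantitative step, as opposed to routine bookkeeping, is the telescoping bound $\sum_{A\in\mathcal{A}}\{f_0(x_A)-f_0(x_A^-)\}\lesssim_d n_1^{d-1}$ controlling the discretisation bias in the Lebesgue upper bound; I expect this to be the only place requiring care, everything else being a direct transfer between the continuous problem on $\mathcal{F}_d$ and the already-established lattice results on $\mathcal{M}(\mathbb{L}_{d,n})$.
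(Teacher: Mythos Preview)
Your proposal is correct and follows essentially the same route as the paper: identify $\mathcal{H}$ with $\mathcal{M}(\mathbb{L}_{d,n})$, read off the $Q=P_n$ case directly from Theorem~\ref{Thm:FixedUpper} and Proposition~\ref{Prop:FixedLower}, and for Lebesgue measure control the discretisation bias in the upper bound and reduce the lower bound to the lattice problem by projecting $\tilde f_n$ onto $\mathcal{H}$. Your coordinate-wise telescoping for the bias is an inline version of the paper's Lemma~\ref{Lemma:RiemannApproximation} (which uses a diagonal-chain partition instead), and for the lower bound you project via block averages rather than the paper's point-sampling map $f(\theta(\cdot))$; both variants work, and your Pythagorean argument is arguably the cleaner of the two.
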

This corollary is surprising in the following sense.  \citet[][Theorem~1.1]{GaoWellner2007} proved that for $d\geq 3$, 
\begin{equation}
\label{Eq:GaoWellner}
 \log N\bigl(\varepsilon, \mathcal{F}_d \cap B_\infty(1), \|\cdot\|_2\bigr) \asymp_d \varepsilon^{-2(d-1)}.
\end{equation}
In particular, for $d\geq 3$, the classes $\mathcal{F}_d \cap B_\infty(1)$ are massive in the sense that the entropy integral $\int_{\delta}^1 \log^{1/2} N(\varepsilon, \mathcal{F}_d \cap B_\infty(1), \|\cdot\|_2) \, \mathrm{d}\varepsilon$ diverges at a polynomial rate in $\delta^{-1}$ as $\delta \searrow 0$.  To the best of our knowledge, this is the first example of a setting where a global empirical risk minimisation procedure has been proved to attain (nearly) the minimax rate of convergence over such massive parameter spaces.

\subsection{Sharp oracle inequality}

In this subsection, we consider the adaptation behaviour of the least squares estimator in dimensions $d \geq 2$ (again, the $d=2$ case is covered in \citet{ChatterjeeGuntuboyinaSen2017}).  Our main result is the sharp oracle inequality in Theorem~\ref{Thm:FixedOracle} below. We call a set in $\mathbb{R}^d$ a hyperrectangle if it is of the form $\prod_{j=1}^d I_j$ where $I_j\subseteq \mathbb{R}$ is an interval for each $j = 1,\ldots,d$.  By a slight abuse of terminology, we also call a subset of $\mathbb{L}_{d,n}$ a hyperrectangle if it is the intersection of a hyperrectangle in $[0,1]^d$ and $\mathbb{L}_{d,n}$.  We say a subset $A$ of $\mathbb{L}_{d,n}$ is a \emph{two-dimensional sheet} if $A = \prod_{j=1}^d [a_j, b_j]$ where $|\{j: b_j = a_j\}|\geq d-2$.  A two-dimensional sheet is therefore a special type of hyperrectangle whose intrinsic dimension is at most two.  For $\theta\in\mathcal{M}(\mathbb{L}_{d,n})$, let $K(\theta)$ denote the cardinality of the minimal partition $\mathbb{L}_{d,n} = \sqcup_{\ell=1}^K A_\ell$ of $\mathbb{L}_{d,n}$ into a disjoint union of two-dimensional sheets $A_1,\ldots,A_K$, where the restricted vector $\theta_{A_\ell} = (\theta(u))_{u \in A_\ell}$ is constant for each $\ell=1,\ldots,K$.
\begin{thm}
  \label{Thm:FixedOracle}
  Let $d\geq 2$. There exists a universal constant $C>0$ such that for every $\theta_0\in\mathbb{R}^{\mathbb{L}_{d,n}}$,
  \[
    R(\hat\theta_n, \theta_0) \leq \inf_{\theta\in\mathcal{M}(\mathbb{L}_{d,n})}\biggl\{\frac{\|\theta-\theta_0\|_2^2}{n} + \frac{CK(\theta)}{n}\log_+^8 \biggl(\frac{n}{K(\theta)}\biggr)\biggr\}.
  \]
\end{thm}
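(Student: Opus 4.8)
The plan is to combine the projection characterisation of $\hat\theta_n$ with a decomposition of the isotonic cone along the two-dimensional sheets on which the oracle vector is constant; this is in the spirit of the sharp oracle inequalities of \citet{Bellec2017} and \citet{ChatterjeeGuntuboyinaSen2017}, but organised so that the only genuinely multivariate input is a bound on the statistical dimension of \emph{bivariate} isotonic cones. Fix $\theta_0\in\mathbb{R}^{\mathbb{L}_{d,n}}$ and $\theta\in\mathcal{M}(\mathbb{L}_{d,n})$; since $R(\hat\theta_n,\theta_0)=n^{-1}\mathbb{E}\|\hat\theta_n-\theta_0\|_2^2$, it suffices to bound $\mathbb{E}\|\hat\theta_n-\theta_0\|_2^2$. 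Because $\hat\theta_n$ is the Euclidean projection of $Y=\theta_0+\epsilon$ onto the closed convex cone $\mathcal{M}(\mathbb{L}_{d,n})$, the obtuse-angle inequality $\langle Y-\hat\theta_n,\theta-\hat\theta_n\rangle\le 0$ rearranges, after bounding $2\langle\epsilon,\hat\theta_n-\theta\rangle-\|\hat\theta_n-\theta\|_2^2$ by its supremum over $u\in\mathcal{M}(\mathbb{L}_{d,n})$, to the \emph{sharp} basic inequality
\[
  \|\hat\theta_n-\theta_0\|_2^2 \;\le\; \|\theta-\theta_0\|_2^2 \;+\; \sup_{u\in\mathcal{M}(\mathbb{L}_{d,n})}\bigl\{2\langle\epsilon,u-\theta\rangle-\|u-\theta\|_2^2\bigr\},
\]
with no multiplicative slack, which is what will produce the leading constant $1$.

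Next -- the heart of the argument -- I would decompose the supremum along the constant sheets of $\theta$. Let $\{A_1,\dots,A_K\}$ be a minimal partition of $\mathbb{L}_{d,n}$ into two-dimensional sheets realising $K=K(\theta)$, let $c_\ell$ be the constant value of $\theta$ on $A_\ell$, and put $m_\ell:=|A_\ell|$. For any $u\in\mathcal{M}(\mathbb{L}_{d,n})$, the restriction $u_{A_\ell}$ lies in the isotonic cone $\mathcal{M}(A_\ell)$ of the induced sub-poset, and since $\mathcal{M}(A_\ell)$ contains all constant vectors we also have $u_{A_\ell}-c_\ell\mathbf{1}\in\mathcal{M}(A_\ell)$. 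Discarding the order constraints that couple distinct sheets, the supremum is bounded by a sum over $\ell$, and for each $\ell$ we invoke the elementary identity $\sup_{w\in\mathcal{C}}\{2\langle g,w\rangle-\|w\|_2^2\}=\|\Pi_{\mathcal{C}}(g)\|_2^2$, valid for any closed convex cone $\mathcal{C}$ (write $2\langle g,w\rangle-\|w\|_2^2=\|g\|_2^2-\|g-w\|_2^2$ and use Moreau's decomposition). This gives the pathwise bound
\[
  \sup_{u\in\mathcal{M}(\mathbb{L}_{d,n})}\bigl\{2\langle\epsilon,u-\theta\rangle-\|u-\theta\|_2^2\bigr\}\;\le\;\sum_{\ell=1}^{K}\bigl\|\Pi_{\mathcal{M}(A_\ell)}(\epsilon_{A_\ell})\bigr\|_2^2,
\]
and taking expectations, $\mathbb{E}\|\hat\theta_n-\theta_0\|_2^2\le\|\theta-\theta_0\|_2^2+\sum_{\ell=1}^{K}\delta\bigl(\mathcal{M}(A_\ell)\bigr)$, where $\delta(\mathcal{C}):=\mathbb{E}\|\Pi_{\mathcal{C}}(g)\|_2^2$ is the statistical dimension, $g$ being a standard Gaussian vector.

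It remains to control $\delta(\mathcal{M}(A_\ell))$. As a directed acyclic graph, a two-dimensional sheet is isomorphic to $\mathbb{L}_{2,m_1,m_2}$ with $m_1m_2=m_\ell$ (or to a path, or a single vertex), so what is needed is the bound $\delta(\mathcal{M}(A_\ell))\le C\log_+^8 m_\ell$, uniformly over the shape of $A_\ell$. For paths this is the classical identity $\delta(\mathcal{M}(\mathbb{L}_{1,m}))=\sum_{i=1}^m 1/i\asymp\log m$; for genuinely two-dimensional sheets, $\delta(\mathcal{M})=\mathbb{E}\|\Pi_{\mathcal{M}}(\epsilon)\|_2^2$ equals $m_\ell$ times the expected squared error of the bivariate isotonic least squares estimator at a constant signal, which is precisely the quantity controlled by the $k(\theta)=1$ instance of \eqref{Eq:WorstCase2d}--\eqref{Eq:Oracle2d} in \citet{ChatterjeeGuntuboyinaSen2017}. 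I expect this to be the principal obstacle: since the $\varepsilon$-metric entropy of bounded bivariate monotone functions in $L_2$ is of order $\varepsilon^{-2}$, the Dudley entropy integral already diverges (logarithmically) at $d=2$, so the polylogarithmic bound on $\delta(\mathcal{M}(\mathbb{L}_{2,m_1,m_2}))$ cannot come from naive chaining and instead rests on the more delicate localised empirical-process argument of \citet{ChatterjeeGuntuboyinaSen2017}, which I would either quote directly or re-derive; care is also needed to keep the bound uniform over degenerate (lower-dimensional) sheets.

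Finally, combining the displays above with the elementary inequality $\sum_{\ell=1}^{K}\log_+^8 m_\ell\le CK\log_+^8(en/K)$, valid whenever $m_\ell\ge 1$ and $\sum_\ell m_\ell=n$, yields $\mathbb{E}\|\hat\theta_n-\theta_0\|_2^2\le\|\theta-\theta_0\|_2^2+CK(\theta)\log_+^8\bigl(n/K(\theta)\bigr)$; dividing by $n$ and taking the infimum over $\theta\in\mathcal{M}(\mathbb{L}_{d,n})$ gives the theorem. No union bound over partitions is required, since $A_1,\dots,A_K$ are fixed once $\theta$ is; the combinatorial content enters only through this summation inequality (and, when one subsequently deduces the $(k/n)^{2/d}$ adaptation rate, through an AM--GM estimate showing that a $d$-dimensional hyperrectangle with $m$ points splits into at most $m^{1-2/d}$ two-dimensional sheets). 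In the case $d=2$, where the sheets are exactly the constant rectangles of $\theta$, this recovers the oracle inequality of \citet{ChatterjeeGuntuboyinaSen2017} with $\log^8 n$ sharpened to $\log_+^8(n/K)$.
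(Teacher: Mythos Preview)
Your proposal is correct and follows essentially the same route as the paper: the paper cites \citet[Proposition~2.1]{Bellec2017} for the sharp basic inequality you derive by hand, then uses the tangent-cone inclusion $T(\theta,\mathcal{M}(\mathbb{L}_{d,n}))\subseteq\bigoplus_\ell\mathcal{M}(A_\ell)$ and additivity of statistical dimension to reach exactly your bound $\mathbb{E}\|\hat\theta_n-\theta_0\|_2^2\le\|\theta-\theta_0\|_2^2+\sum_{\ell}\delta(\mathcal{M}(A_\ell))$, after which both arguments invoke the $\log_+^8 m_\ell$ bound from \citet{ChatterjeeGuntuboyinaSen2017} and a concavity estimate (the paper's Lemma~\ref{Lemma:Jensen+}) to sum. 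Your Moreau-decomposition phrasing is a minor presentational variant of the same idea.
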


We remark that Theorem~\ref{Thm:FixedOracle} does not imply (nearly) parametric adaptation when $d \geq 3$. This is because even when $\theta_0$ is constant on $\mathbb{L}_{d,n}$ for every $n$, we have $K(\theta_0) = n^{(d-2)/d} \rightarrow \infty$ as $n\to\infty$. The following corollary of Theorem~\ref{Thm:FixedOracle} gives an alternative (weaker) form of oracle inequality that offers easier comparison to lower dimensional results given in~\eqref{Eq:Oracle1d} and~\eqref{Eq:Oracle2d}. Let $\mathcal{M}^{(k)}(\mathbb{L}_{d,n})$ be the collection of all $\theta\in\mathcal{M}(\mathbb{L}_{d,n})$  such that there exists a partition $\mathbb{L}_{d,n} = \sqcup_{\ell=1}^k R_\ell$ where $R_1,\ldots,R_k$ are hyperrectangles with the property that for each $\ell$, the restricted vector $\theta_{R_\ell}$ is constant.
\begin{thm}
 \label{Thm:FixedOracleBlock}
 Let $d\geq 2$. There exists a universal constant $C>0$ such that for every $\theta_0 \in \mathbb{R}^{\mathbb{L}_{d,n}}$,
 \[
  R(\hat\theta_n, \theta_0) \leq \inf_{k\in\mathbb{N}}\biggl\{\inf_{\theta\in\mathcal{M}^{(k)}(\mathbb{L}_{d,n})} \frac{\|\theta-\theta_0\|_2^2}{n} + C\biggl(\frac{k}{n}\biggr)^{2/d} \log_+^8 \biggl(\frac{n}{k}\biggr)\biggr\}.
 \]
\end{thm}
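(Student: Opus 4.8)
The plan is to deduce this from Theorem~\ref{Thm:FixedOracle} by converting the hyperrectangle count $k$ into a bound on the two-dimensional sheet count $K(\theta)$. Concretely, I would show that every $\theta\in\mathcal{M}^{(k)}(\mathbb{L}_{d,n})$ satisfies $K(\theta)\le k^{2/d}n^{1-2/d}$, substitute this into Theorem~\ref{Thm:FixedOracle}, and then control the resulting penalty term by elementary estimates.

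The combinatorial core is the claim that a lattice hyperrectangle $R=\prod_{j=1}^d J_j\subseteq\mathbb{L}_{d,n}$, with $J_j$ a discrete interval of cardinality $m_j$, can be partitioned into at most $|R|^{(d-2)/d}$ two-dimensional sheets. For this I would fix the $d-2$ coordinates with the smallest side lengths to each of their possible values and let the remaining two coordinates range freely; this partitions $R$ into exactly $\prod_{j=3}^d m_{(j)}$ two-dimensional sheets, where $m_{(1)}\ge\cdots\ge m_{(d)}$ is the decreasing rearrangement of $m_1,\ldots,m_d$. Since $m_{(j)}\le m_{(2)}\le m_{(1)}$ for every $j\ge 3$, one has $\bigl(\prod_{j=3}^d m_{(j)}\bigr)^2\le m_{(2)}^{2(d-2)}\le (m_{(1)}m_{(2)})^{d-2}$, hence $\prod_{j=3}^d m_{(j)}\le\bigl(\prod_{j=1}^d m_j\bigr)^{(d-2)/d}=|R|^{(d-2)/d}$, as claimed. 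Then, given $\theta\in\mathcal{M}^{(k)}(\mathbb{L}_{d,n})$ with hyperrectangle partition $\mathbb{L}_{d,n}=\sqcup_{\ell=1}^k R_\ell$ on each piece of which $\theta$ is constant, refining each $R_\ell$ in this way produces a partition of $\mathbb{L}_{d,n}$ into two-dimensional sheets on each of which $\theta$ is constant, so $K(\theta)\le\sum_{\ell=1}^k|R_\ell|^{(d-2)/d}$. Because $\sum_{\ell=1}^k|R_\ell|=n$ and $t\mapsto t^{(d-2)/d}$ is concave on $[0,\infty)$ for $d\ge 2$, Jensen's inequality gives $K(\theta)\le k(n/k)^{(d-2)/d}=k^{2/d}n^{1-2/d}=:M$.

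It then remains to feed $K(\theta)\le M$ into Theorem~\ref{Thm:FixedOracle} and check that $\tfrac{K(\theta)}{n}\log_+^8(n/K(\theta))\lesssim (k/n)^{2/d}\log_+^8(n/k)$. I would first reduce to $k\le n$: for $k>n$ one has $\mathcal{M}^{(k)}(\mathbb{L}_{d,n})=\mathcal{M}^{(n)}(\mathbb{L}_{d,n})$ while $(k/n)^{2/d}\log_+^8(n/k)$ is nondecreasing in $k$ for $k\ge n$, so the infimum over $k\in\mathbb{N}$ is unchanged; in particular $1\le K(\theta)\le M\le n$ and $M\ge k$. The function $\varphi(m):=\tfrac{m}{n}\log_+^8(n/m)$ is increasing on $[1,n/e^8]$, is bounded above on $[1,n]$ by a universal constant, and satisfies $\varphi(m)\ge e^{-8}$ on $[n/e^8,n]$, so that $\varphi(K(\theta))\le C\varphi(M)$ whenever $1\le K(\theta)\le M\le n$. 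Finally, $M\ge k$ and $M/n=(k/n)^{2/d}$ together with monotonicity of $\log_+$ give $\varphi(M)=(k/n)^{2/d}\log_+^8(n/M)\le(k/n)^{2/d}\log_+^8(n/k)$, whence $R(\hat\theta_n,\theta_0)\le \tfrac{\|\theta-\theta_0\|_2^2}{n}+C(k/n)^{2/d}\log_+^8(n/k)$ for every $k\le n$ and every $\theta\in\mathcal{M}^{(k)}(\mathbb{L}_{d,n})$; taking infima over $\theta$ and over $k$ yields the claim.

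I do not expect a deep obstacle here, since the heavy lifting has already been done in Theorem~\ref{Thm:FixedOracle}; the only genuinely non-routine point is the geometric observation that the product of the $d-2$ smallest side lengths of a lattice hyperrectangle is bounded by the $(d-2)/d$-th power of its volume. The remaining ingredients — the explicit partition of a hyperrectangle into axis-aligned sheets, the concavity/Jensen step, and the quasi-monotonicity of $\varphi$ — are standard bookkeeping.
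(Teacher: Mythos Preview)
Your proposal is correct and follows essentially the same route as the paper: both partition each hyperrectangle $R_\ell$ into two-dimensional sheets by letting the two largest side lengths vary, obtain $K(\theta)\le\sum_\ell|R_\ell|^{1-2/d}\le k(n/k)^{1-2/d}$ via concavity and Jensen, and then invoke Theorem~\ref{Thm:FixedOracle}. The paper simply asserts that this bound ``combined with the oracle inequality in Theorem~\ref{Thm:FixedOracle}, gives the desired result,'' whereas you spell out the quasi-monotonicity argument for $m\mapsto\tfrac{m}{n}\log_+^8(n/m)$ that justifies passing from $K(\theta)\le M$ to the claimed penalty; this extra care is warranted but does not constitute a different approach.
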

It is important to note that both Theorems~\ref{Thm:FixedOracle} and~\ref{Thm:FixedOracleBlock} allow for model misspecification, as it is not assumed that $\theta_0 \in \mathcal{M}(\mathbb{L}_{d,n})$. For signal vectors $\theta_0$ that are piecewise constant on $k$ hyperrectangles, Theorem~\ref{Thm:FixedOracleBlock} provides an upper bound of the risk of order $(k/n)^{2/d}$ up to poly-logarithmic factors.  The following proposition shows that even for a constant signal vector, the adaptation rate of $n^{-2/d}$ given in Theorem~\ref{Thm:FixedOracleBlock} cannot be improved. 
\begin{prop}
 \label{Prop:FixedAdaptLower}
 Let $d\geq 2$.  There exists a constant $c_d > 0$, depending only on $d$, such that for any $\theta_0\in\mathcal{M}^{(1)}(\mathbb{L}_{d,n})$, 
 \[
   R(\hat\theta_n, \theta_0) \geq c_d\begin{cases} n^{-1}\log^2 n & \text{if $d = 2$}\\ n^{-2/d} & \text{if $d\geq 3$.}\end{cases}
 \]
\end{prop}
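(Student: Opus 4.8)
The plan is to establish a lower bound on the worst-case risk of the least squares estimator over the single point $\theta_0 = 0$ (by translation invariance of $\mathcal{M}(\mathbb{L}_{d,n})$ as a cone, it suffices to treat constant $\theta_0$, and WLOG $\theta_0 \equiv 0$). Since $\hat\theta_n$ is the projection of $Y = \epsilon$ onto the cone $\mathcal{M}(\mathbb{L}_{d,n})$, we have $\|\hat\theta_n - 0\|_2^2 = \|\Pi_{\mathcal{M}}(\epsilon)\|_2^2$, and by the standard characterisation of the projection onto a closed convex cone,
\[
 \|\Pi_{\mathcal{M}}(\epsilon)\|_2^2 = \sup_{\theta \in \mathcal{M}(\mathbb{L}_{d,n}),\, \|\theta\|_2 \leq 1} \langle \epsilon, \theta\rangle^2 \wedge (\text{something}),
\]
but more usefully, $\mathbb{E}\|\Pi_{\mathcal{M}}(\epsilon)\|_2^2 \geq \mathbb{E}\bigl[\langle \epsilon, \theta\rangle_+^2\bigr]$ for any fixed unit vector $\theta \in \mathcal{M}(\mathbb{L}_{d,n})$, and in fact $\mathbb{E}\|\Pi_{\mathcal{M}}(\epsilon)\|_2^2 \geq \bigl(\mathbb{E}\sup_{\theta \in \mathcal{M} \cap B_2(1)} \langle \epsilon, \theta\rangle\bigr)^2$ up to constants via the relation between the Gaussian width of the cone and the expected squared norm of the projection. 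So the task reduces to lower bounding the Gaussian width $w(\mathcal{M}(\mathbb{L}_{d,n}) \cap B_2(1)) := \mathbb{E}\sup_{\theta \in \mathcal{M}(\mathbb{L}_{d,n}),\, \|\theta\|_2 \leq 1}\langle \epsilon, \theta\rangle$ from below by $c_d n^{1/2 - 1/d}$ for $d \geq 3$ (and by $c\sqrt{\log n}$ for $d = 2$), after which dividing by $n$ gives the claimed rates $n^{-2/d}$ and $n^{-1}\log^2 n$ respectively. Care is needed with the $d=2$ case, where the bound is $n^{-1}\log^2 n$ rather than $n^{-1}\log n$, so a slightly sharper argument (two powers of $\log$) is required there.

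For the lower bound on the Gaussian width, the key step is to exhibit an explicit monotone unit vector, or a rich low-dimensional sub-family of monotone vectors, along which $\langle \epsilon, \theta\rangle$ is large in expectation. A natural construction: partition $\mathbb{L}_{d,n}$ into "level sets" of the function $u \mapsto \sum_{j=1}^d u_j$, i.e. into antichains $L_m := \{u \in \mathbb{L}_{d,n} : \sum_j u_j = m\}$ for $m = d, \ldots, dn_1$; any vector that is constant on each $L_m$ and nondecreasing in $m$ lies in $\mathcal{M}(\mathbb{L}_{d,n})$. The number of such levels is $\asymp_d n_1 = n^{1/d}$, and the largest ones have cardinality $\asymp_d n^{(d-1)/d}$. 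Optimising a monotone step function over these $\asymp n^{1/d}$ levels against the averaged noise $\bar\epsilon_{L_m} := |L_m|^{-1}\sum_{u \in L_m}\epsilon_u$ is essentially a one-dimensional isotonic problem with $\asymp n^{1/d}$ "super-observations", each carrying variance $\asymp |L_m|^{-1} \asymp n^{-(d-1)/d}$; the expected squared norm of the monotone projection in that reduced problem is of order (number of levels)$^{1/3} \times$(per-level variance) only if one uses the $2/3$ scaling, but here we instead want the crude bound coming from the full width. Alternatively, and more cleanly, I would lower bound the width directly by restricting to indicator vectors of up-sets: for any down-closed set $D$ (an order ideal) in $\mathbb{L}_{d,n}$, the vector $\mathbbm{1}_{D^c} - \mathbbm{1}_{D} \cdot (\text{const})$ suitably normalised is monotone, giving $w(\mathcal{M} \cap B_2(1)) \gtrsim \mathbb{E}\sup_{D}\bigl|\sum_{u \in D}\epsilon_u\bigr|/\sqrt{|D|}$, and then choosing $D$ to range over a combinatorially rich family of ideals (e.g. those determined by a monotone staircase hypersurface) produces the required $n^{1/2-1/d}$ growth by a union/chaining lower bound or by Sudakov minoration applied to the packing of such ideal-indicators.

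The main obstacle I anticipate is making the Gaussian-width lower bound genuinely match $n^{1/2 - 1/d}$ (not merely $n^{1/2 - 1/d}$ up to logs, and not the easy but too-weak $\sqrt{\log n}$ that would follow from a single chain of length $n^{1/d}$). Concretely: a single maximal chain only gives width $\asymp (\log n^{1/d})^{1/2} \asymp (\log n)^{1/2}$, which after dividing by $n$ yields $n^{-1}\log n$ — enough for $d = 2$ up to one log factor but far too small for $d \geq 3$. To get the full $n^{1/2-1/d}$ one must exploit that $\mathcal{M}(\mathbb{L}_{d,n})$ contains many "parallel" chains or, equivalently, a large antichain's worth of independent one-dimensional isotonic subproblems; quantifying the combined contribution and verifying the resulting vector is globally monotone (not just monotone along each chain) is the delicate part. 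For the $d = 2$ case I expect the extra log factor to come from the known fact that the statistical dimension / Gaussian width of $\mathcal{M}(\mathbb{L}_{2,n})$ is $\asymp \log^2 n$ (as in \citet{ChatterjeeGuntuboyinaSen2017}), so I would invoke or re-derive that estimate. I would also double-check the reduction from $\mathbb{E}\|\Pi_{\mathcal{M}}(\epsilon)\|_2^2$ to the squared Gaussian width, using that for a closed convex cone $\mathbb{E}\|\Pi_{\mathcal{M}}(\epsilon)\|_2^2$ equals the statistical dimension, which is sandwiched between $w^2$ and $w^2 + 1$; this handles the passage cleanly and uniformly in $d$.
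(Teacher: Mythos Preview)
Your overall strategy matches the paper's: reduce to $\theta_0=0$ by translation invariance of the cone, identify $nR(\hat\theta_n,0)$ with the statistical dimension $\delta(\mathcal{M}(\mathbb{L}_{d,n}))$, and lower-bound the latter via the Gaussian width $w := \mathbb{E}\sup_{\theta\in\mathcal{M}\cap B_2(1)}\langle\epsilon,\theta\rangle$. Your reduction $\delta \geq w^2$ via Jensen is in fact slightly cleaner than the paper's route through \citet[Corollary~1.2]{Chatterjee2014}, which gives $nR \geq t_0^2 - Ct_0^{3/2}$ with $t_0 = w$; either works once $w$ is large.

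For $d\geq 3$, though, your proposed constructions are more convoluted than needed and not fully carried through. The paper (Proposition~\ref{Prop:GaussianWidthLattice}) uses a single antichain $W=\{w:\sum_j w_j = n_1\}$ of size $\gtrsim_d n^{1-1/d}$ and the explicit random test vector $\theta(\epsilon)$ equal to $+1$ on $W^+$, $-1$ on $W^-$, and $\mathrm{sgn}(\epsilon_w)$ on $W$; then $\theta(\epsilon)/\sqrt{n}\in\mathcal{M}\cap B_2(1)$ and $\mathbb{E}\langle\epsilon,\theta(\epsilon)/\sqrt{n}\rangle = \sqrt{2/\pi}\,|W|/\sqrt{n}\gtrsim_d n^{1/2-1/d}$ in one line. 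Your level-set reduction to a one-dimensional isotonic problem on $\sim n^{1/d}$ super-observations would yield only width $\sim\sqrt{\log n}$, and your Sudakov-on-down-sets alternative, while plausible, is left unquantified. The point you are circling but not landing on is that on a single antichain the signs are \emph{completely free}, so monotonicity imposes no constraint there; no combination of chains or ideals is needed.

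For $d=2$ there is a genuine gap. The lower bound $\delta(\mathcal{M}(\mathbb{L}_{2,n}))\gtrsim\log^2 n$ is \emph{not} in \citet{ChatterjeeGuntuboyinaSen2017}; that paper supplies only the upper bound $\lesssim\log^8 n$ (cf.\ Table~\ref{Tab:Summary}). The present paper establishes $w\gtrsim\log n$ via Sudakov minorisation combined with a new metric-entropy lower bound (Lemma~\ref{Lemma:MetricEntropyLowerBound}): a dyadic decomposition of $\{1,\ldots,n_1\}^2$ into $\sim\log^2 n$ rectangular blocks $I_r\times I_s$, each carrying one of two constant values scaled so the total vector has unit $\ell_2$ norm, yields after Gilbert--Varshamov a packing of $\mathcal{M}(\mathbb{L}_{2,n})\cap B_2(1)$ of cardinality $\exp(c\log^2 n)$ at a fixed scale $\varepsilon_0$. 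Neither the single-chain bound ($w\gtrsim\sqrt{\log n}$) nor the single-antichain bound ($w\gtrsim 1$) that you mention suffices here, so you cannot cite your way out; a genuinely two-dimensional packing construction is required.
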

The case $d=2$ of this result is new, and reveals both a difference with the univariate situation, where the adaptation rate is of order $n^{-1}\log n$ \citep{Bellec2017}, and that a poly-logarithmic penalty relative to the parametric rate is unavoidable for the least squares estimator.  Moreover, we see from Proposition~\ref{Prop:FixedAdaptLower} that for $d \geq 3$, although the least squares estimator achieves a faster rate of convergence than the worst-case bound in Theorem~\ref{Thm:FixedUpper} on constant signal vectors, the rate is not parametric, as would have been the case for a minimax optimal estimator over the set of constant vectors. This is in stark contrast to the nearly parametric adaptation results established in~\eqref{Eq:Oracle1d} and~\eqref{Eq:Oracle2d} for dimensions $d\leq 2$. 

Another interesting aspect of these results relates to the notion of \emph{statistical dimension}, defined for an arbitrary cone $C$ in $\mathbb{R}^n$ by\footnote{Our reason for defining the statistical dimension via an integral rather than as $\mathbb{E}\|\Pi_C(\epsilon)\|_2^2$ is because, in the random design setting, the cone $C$ is itself random, and in that case $\delta(C)$ is a random quantity.} $\delta(C) := \int_{\mathbb{R}^n} \|\Pi_C(x)\|_2^2 (2\pi)^{-n/2}e^{-\|x\|_2^2/2} \, \mathrm{d}x$, where $\Pi_{C}$ is the projection onto the set $C$ \citep{Amelunxenetal2014}.  Theorem~\ref{Thm:FixedOracleBlock} and Proposition~\ref{Prop:FixedAdaptLower} reveal a type of phase transition phenomenon for the statistical dimension $\delta(\mathcal{M}(\mathbb{L}_{d,n})) = R(\hat\theta_n, 0)$ of the monotone cone (cf.\ Table~\ref{Tab:Summary}).
\begin{table}[htbp!]
    \begin{center}
    \begin{threeparttable}
            \caption{Bounds\tnote{$\ast$} for $\delta\bigl(\mathcal{M}(\mathbb{L}_{d,n})\bigr)$.}    \label{Tab:Summary}
            \begin{tabular}{ccccc}
                $d$ & & upper bound & & lower bound\\
                \hline\\[-12pt]
                $1$ & &  $\sum_{i=1}^n i^{-1}$ \tnote{$\dagger$} & & $\sum_{i=1}^n i^{-1}$ \tnote{$\dagger$} \vspace{0.2cm}\\
                $2$ & &  $\lesssim \log^8 n$  \tnote{$\ddagger$}& & $\gtrsim \log^2 n$ \vspace{0.2cm}\\
                $\geq 3$ & & $\lesssim n^{1-2/d}\log^8 n$ & & $\gtrsim_d n^{1-2/d}$
            \end{tabular}
            {\footnotesize \begin{tablenotes}
                \item[$\ast$] Entries without a reference are proved in this paper.
                \item[$\dagger$] \citet{Amelunxenetal2014}
                \item[$\ddagger$] \citet{ChatterjeeGuntuboyinaSen2017}
            \end{tablenotes}}
    \end{threeparttable}
    \end{center}
\end{table}

The following corollary of Theorem~\ref{Thm:FixedOracle} gives another example where different adaptation behaviour is observed in dimensions $d\geq 3$, in the sense that the $n^{-2/d}\log^8 n$ adaptive rate achieved for constant signal vectors is actually available for a much wider class of isotonic signals that depend only on $d-2$ of all $d$ coordinates of $\mathbb{L}_{d,n}$. For $r = 0,1,\ldots,d$, we say a vector $\theta_0\in \mathcal{M}(\mathbb{L}_{d,n})$ is a \emph{function of $r$ variables}, written $\theta_0\in\mathcal{M}_r(\mathbb{L}_{d,n})$, if there exists $\mathcal{J} \subseteq \{1,\ldots,d\}$, of cardinality~$r$, such that $(\theta_0)_{(x_1,\ldots,x_d)} = (\theta_0)_{(x_1',\ldots,x_d')}$ whenever $x_j = x_j'$ for all $j \in \mathcal{J}$.   
\begin{cor}
  \label{Cor:d-2}
  For $d \geq 2$, there exists constant $C_d>0$, depending only on $d$, such that 
  \[
    \sup_{\theta_0\in\mathcal{M}_r(\mathbb{L}_{d,n}) \cap B_\infty(1)} R(\hat\theta_n, \theta_0) \leq C_d\begin{cases}
                                                                  n^{-2/d}\log^8 n & \text{ if $r \leq d-2$}\\
                                                                  n^{-4/(3d)}\log^{16/3} n & \text{ if $r = d-1$}\\
                                                                  n^{-1/d}\log^4 n & \text{ if $r = d$.}
                                                                 \end{cases}
  \]
\end{cor}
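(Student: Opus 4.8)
The plan is to derive all three bounds from the sharp oracle inequality of Theorem~\ref{Thm:FixedOracle}, by feeding it a carefully chosen $\theta\in\mathcal{M}(\mathbb{L}_{d,n})$ that is close to $\theta_0$ and constant on as few two-dimensional sheets as possible, and then balancing the two resulting terms $\|\theta-\theta_0\|_2^2/n$ and $CK(\theta)n^{-1}\log_+^8(n/K(\theta))$. Write $n_1=n^{1/d}$. The case $r=d$ needs no work, since $\mathcal{M}_d(\mathbb{L}_{d,n})=\mathcal{M}(\mathbb{L}_{d,n})$ and the assertion is then exactly Theorem~\ref{Thm:FixedUpper}. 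For $r\le d-2$ I would take $\theta=\theta_0$, so the approximation term vanishes: after relabelling coordinates so that $\theta_0$ depends only on coordinates $1,\dots,r$, the vector $\theta_0$ is constant on each of the $n_1^r$ slices obtained by freezing those coordinates, and each slice --- a copy of $\{1,\dots,n_1\}^{d-r}$ with $d-r\ge 2$ free directions --- splits into $n_1^{d-r-2}$ two-dimensional sheets upon freezing all but two of those directions. This yields $K(\theta_0)\le n_1^{r}\,n_1^{d-r-2}=n_1^{d-2}=n^{1-2/d}$, and Theorem~\ref{Thm:FixedOracle} then gives $R(\hat\theta_n,\theta_0)\le C_d\,n^{-2/d}\log^8 n$.

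The case $r=d-1$ is where the real work lies. Relabel so that $\theta_0$ is constant in coordinate $d$, and let $g$ denote the induced isotonic vector on $\mathbb{L}_{d-1,n_1}$, whose entries lie in $[-1,1]$. For a resolution parameter $\delta\in(0,1]$ I would take $\theta$ to be the extension to $\mathbb{L}_{d,n}$ (constant in coordinate $d$) of the entrywise-discretised vector $\delta\lfloor g/\delta\rfloor$; this $\theta$ is isotonic and satisfies $\|\theta-\theta_0\|_\infty\le\delta$, hence $\|\theta-\theta_0\|_2^2/n\le\delta^2$. The crucial point is that $\theta$ is constant on few two-dimensional sheets: on each fibre in the first coordinate of $\mathbb{L}_{d-1,n_1}$ (coordinates $2,\dots,d-1$ frozen), $g$ is increasing with entries in $[-1,1]$, so $\delta\lfloor g/\delta\rfloor$ takes at most $2/\delta+1$ distinct values along it and is therefore constant on at most that many consecutive blocks; running over all $n_1^{d-2}$ such fibres and crossing each block with the full $d$-th coordinate partitions $\mathbb{L}_{d,n}$ into at most $Cn_1^{d-2}/\delta$ two-dimensional sheets on which $\theta$ is constant, so $K(\theta)\le Cn^{1-2/d}/\delta$. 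Inserting these estimates into Theorem~\ref{Thm:FixedOracle}, and using $n/K(\theta)\gtrsim n^{2/d}\delta$ to bound the logarithmic factor by a multiple of $\log^8 n$, I obtain
\[
  R(\hat\theta_n,\theta_0)\;\lesssim\;\delta^2+\frac{n^{-2/d}\log^8 n}{\delta},
\]
and choosing $\delta\asymp n^{-2/(3d)}\log^{8/3}n$ balances the two terms and gives $R(\hat\theta_n,\theta_0)\lesssim_d n^{-4/(3d)}\log^{16/3}n$; for the finitely many small $n$ (with $d$ fixed) at which this choice of $\delta$ would exceed $1$, the crude bound $R(\hat\theta_n,\theta_0)\lesssim 1$, which follows from $\|\theta_0\|_\infty\le 1$ and the fact that $\hat\theta_n$ is a projection, already suffices.

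The main obstacle is the case $r=d-1$, and within it the geometric observation that one can take the constant pieces of the approximant $\theta$ to be one-dimensional blocks of $\mathbb{L}_{d-1,n_1}$ crossed with the absent coordinate, i.e.\ genuine two-dimensional sheets, rather than generic hyperrectangles. This is precisely what lets me apply Theorem~\ref{Thm:FixedOracle}, which charges only $K(\theta)n^{-1}$ up to logarithms, instead of Theorem~\ref{Thm:FixedOracleBlock}, which would charge the much larger $(K(\theta)/n)^{2/d}$ and yield a strictly weaker exponent; it is this saving, together with the optimisation over $\delta$, that produces the exponent $4/(3d)$ and the power $16/3$ on the logarithm --- both of which interpolate, with weights $1/3$ and $2/3$, between the $r\le d-2$ and $r=d$ bounds. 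The remaining steps --- verifying that the discretised, extended vectors are isotonic, confirming that the two error terms are of the stated orders (the $\log_+$ bookkeeping included), and disposing of small $n$ --- are routine.
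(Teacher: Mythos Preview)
Your proposal is correct and follows essentially the same route as the paper's own proof: both dispatch $r=d$ to Theorem~\ref{Thm:FixedUpper}, both handle $r\le d-2$ by taking $\theta=\theta_0$ and counting $K(\theta_0)\le n^{1-2/d}$ via a sheet decomposition, and both treat $r=d-1$ by a value-discretisation of $\theta_0$ at resolution $\delta$ (the paper writes this as $1/m$), observing that each constant piece is a one-dimensional interval crossed with the irrelevant coordinate and hence a genuine two-dimensional sheet, and then balancing $\delta^2$ against $n^{-2/d}\delta^{-1}\log^8 n$. One small wording slip: to bound $\log_+^8(n/K(\theta))$ by $\log^8 n$ you need an \emph{upper} bound on $n/K(\theta)$, which follows trivially from $K(\theta)\ge 1$; the lower bound $n/K(\theta)\gtrsim n^{2/d}\delta$ you cite is not what does the work there.
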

If the signal vector $\theta_0$ belongs to $\mathcal{M}_r(\mathbb{L}_{d,n})$, then it is intrinsically an $r$-dimensional isotonic signal. Corollary~\ref{Cor:d-2} demonstrates that the least squares estimator exhibits three different levels of adaptation when the signal is a function of $d, d-1, d-2$ variables respectively. However, viewed together with Proposition~\ref{Prop:FixedLower}, Corollary~\ref{Cor:d-2} shows that no further adaptation is available when the intrinsic dimension of the signal vector decreases further. Moreover, if we let $\tilde n = n^{2/d}$ denote the size of a maximal two-dimensional sheet in $\mathbb{L}_{d,n}$, then the three levels of adaptive rates in Corollary~\ref{Cor:d-2} are $\tilde n^{-1}$, $\tilde n^{-2/3}$ and $\tilde n^{-1/2}$ respectively, up to poly-logarithmic factors, matching the two-dimensional `automatic variable adaptation' result described in \citet[Theorem~2.4]{ChatterjeeGuntuboyinaSen2017}. In this sense, the adaptation of the isotonic least squares estimator in general dimensions is essentially a two-dimensional phenomenon.

\section{Random design}
\label{Sec:RandomDesign}
In this section, we consider the setting where the design points $X_1,\ldots,X_n$ are independent and identically distributed from some distribution $P$ supported on the unit cube $[0,1]^d$.  We will assume throughout that $P$ has Lebesgue density $p_0$ such that $0<m_0\leq \inf_{x\in[0,1]^d}p_0(x) \leq \sup_{x\in[0,1]^d}p_0(x)\leq M_0 <\infty$.  Since the least squares estimator $\hat{f}_n$ is only well-defined on $X_1,\ldots,X_n$, for definiteness, we extend $\hat{f}_n$ to $[0,1]^d$ by defining $\hat{f}_n(x) := \min\bigl(\{\hat f_n(X_i): 1\leq i\leq n, X_i\succeq x\} \cup \{ \max_i \hat f_n(X_i) \} \bigr)$.  If we let $\mathbb{P}_n := n^{-1}\sum_{i=1}^n \delta_{X_i}$, then the risk function~\eqref{Eq:Loss} is $R(\hat{f}_n, f_0) = \mathbb{E} \|\hat{f}_n - f_0\|_{L_2(\mathbb{P}_n)}^2$ in the context of random design.

The main results of this section are the following two theorems, establishing respectively the worst-case performance and the sharp oracle inequality for the least squares estimator in the random design setting. We write $\mathcal{F}_d^{(k)}$ for the class of functions in $\mathcal{F}_d$ that are piecewise constant on $k$ hyperrectangular pieces. In other words, if $f\in\mathcal{F}_d^{(k)}$, then there exists a partition $[0,1]^d = \sqcup_{\ell=1}^k R_\ell$, such that the closure of each $R_\ell$ is a hyperrectangle and $f$ is a constant function when restricted to each $R_\ell$.  Let $\gamma_2 := 9/2$ and $\gamma_d := (d^2+d+1)/2$ for $d \geq 3$.
\begin{thm}
  \label{Thm:RandomUpper}
  Let $d\geq 2$. There exists a constant $C_{d,m_0,M_0}>0$, depending only on $d, m_0$ and $M_0$, such that
  \[
    \sup_{f_0\in\mathcal{F}_d\cap B_\infty(1)} R(\hat{f}_n,f_0) \leq C_{d,m_0,M_0} n^{-1/d}\log^{\gamma_d} n.
  \]
\end{thm}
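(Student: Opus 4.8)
\emph{Proposed proof strategy.} The plan is to reduce the random design to a (mildly perturbed) cubic lattice by binning, and then to re-run, in the empirical metric $\|\cdot\|_{L_2(\mathbb{P}_n)}$, the empirical-process analysis underlying Theorem~\ref{Thm:FixedUpper}; the randomness of the design should cost only extra poly-logarithmic factors. Fix an integer $m$ with $m^d \asymp n/\log^{a}n$ for a dimension-dependent exponent $a$ to be chosen, and partition $[0,1]^d$ into the $m^d$ congruent half-open subcubes $\{C_w : w\in\mathbb{L}_{d,m^d}\}$, ordered by $\preceq$; for $x\in[0,1]^d$ let $w(x)$ denote the index of the cube containing $x$. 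Since $m_0\le p_0\le M_0$, a Bernstein inequality and a union bound over the $m^d$ cubes produce an event $\mathcal{E}$ with $\mathbb{P}(\mathcal{E})\ge 1-n^{-2}$ on which (i) each cube contains between $c_{m_0}nm^{-d}$ and $C_{M_0}nm^{-d}\asymp\log^a n$ design points, and (ii) $\|\cdot\|_{L_2(\mathbb{P}_n)}$ and $\|\cdot\|_{L_2(P)}$ are comparable on the relevant uniformly bounded subclasses of $\mathcal{F}_d-\mathcal{F}_d$. Since the isotonic LSE at each vertex is a max--min of partial averages of the $Y_i$'s, we also have $\|\hat f_n\|_\infty\lesssim\sqrt{\log n}$ off an event of probability $\lesssim n^{-2}$; the contribution of all these bad events to the risk is $o(n^{-1/d})$ by crude Gaussian and binomial tail bounds, so we may condition on $\mathcal{E}$ and assume $\hat f_n\in B_\infty(C\sqrt{\log n})$.

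Because $\theta_0=(f_0(X_i))_i\in\mathcal{M}(G_X)$ and $\hat f_n$ is the projection of $Y=\theta_0+\epsilon$ onto this cone, the basic inequality gives $\|\hat f_n-f_0\|_{L_2(\mathbb{P}_n)}^2\le\langle\epsilon,\hat f_n-f_0\rangle_{L_2(\mathbb{P}_n)}$, where $\langle\epsilon,g\rangle_{L_2(\mathbb{P}_n)}:=n^{-1}\sum_{i=1}^n\epsilon_i g(X_i)$. A peeling device over dyadic scales of $\|\hat f_n-f_0\|_{L_2(\mathbb{P}_n)}$ then reduces matters to bounding, on $\mathcal{E}$, the fixed point
\[
 t_\ast:=\inf\Bigl\{t>0:\ \mathbb{E}_\epsilon\sup_{\substack{f\in\mathcal{F}_d\cap B_\infty(C\sqrt{\log n})\\ \|f-f_0\|_{L_2(\mathbb{P}_n)}\le t}}\langle\epsilon,f-f_0\rangle_{L_2(\mathbb{P}_n)}\le t^2/2\Bigr\},
\]
since then $R(\hat f_n,f_0)\lesssim\mathbb{E}[t_\ast^2\mathbf{1}_{\mathcal{E}}]+o(n^{-1/d})$; it suffices to show $t_\ast^2\lesssim_{d,m_0,M_0}n^{-1/d}\log^{\gamma_d}n$.

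To bound $t_\ast$ I would split each competitor at the cube boundaries: writing $g^\flat(x)$ for the value of $g$ at the bottom vertex of $C_{w(x)}$, decompose $f-f_0=(f^\flat-f_0^\flat)+(f-f^\flat)-(f_0-f_0^\flat)$. The leading piece $f^\flat-f_0^\flat$ is constant on each cube and its vector of cube-values is isotonic on a subgraph of $\mathbb{L}_{d,m^d}$ (a subgraph, rather than $\mathbb{L}_{d,m^d}$ itself, because points lying in $\preceq$-comparable cubes need not be comparable in $G_X$ — this costs only constants), so the corresponding part of the empirical process is, up to constants, the localized complexity of $\mathcal{M}(\mathbb{L}_{d,m^d})$, which re-running the argument behind Theorem~\ref{Thm:FixedUpper} in the $L_2(\mathbb{P}_n)$ metric bounds by $\lesssim(m^d)^{-1/d}\log^4 n\asymp n^{-1/d}\log^{4+a/d}n$. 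The deterministic error is handled by the bias estimate $\|f_0-f_0^\flat\|_{L_2(P)}^2\lesssim_{d,M_0}m^{-1}\asymp n^{-1/d}\log^{a/d}n$, valid because the sum of squared cube-oscillations of a function in $\mathcal{F}_d\cap B_\infty(1)$ over the $m^d$-grid is $\lesssim_d m^{d-1}$ (decompose $\mathbb{L}_{d,m^d}$ into $\asymp_d m^{d-1}$ diagonal chains along which these oscillations telescope to at most $2$, then use $\sum_w\mathrm{osc}(f_0;C_w)^2\le(\sup_w\mathrm{osc})\sum_w\mathrm{osc}$). Finally, the within-cube remainder $f-f^\flat$ (and likewise $f_0-f_0^\flat$) has the crucial feature that its empirical process decouples across cubes; the $w$-th summand is the isotonic complexity on the $\asymp\log^a n$ design points inside $C_w$ at radius proportional to $\mathrm{osc}(f;C_w)$, and summing over $w$ with the same telescoping bound $\sum_w\mathrm{osc}(f;C_w)^2\lesssim_d m^{d-1}$ shows this is of strictly smaller order than the main term.

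The conceptual crux — and the step I expect to be the main obstacle — is this localized-complexity bound for $\mathcal{M}(\mathbb{L}_{d,m^d})$ (equivalently $\mathcal{F}_d\cap B_\infty$) in the empirical metric. By \citet{GaoWellner2007}, $\log N(\varepsilon,\mathcal{F}_d\cap B_\infty(1),\|\cdot\|_{L_2(P)})\asymp_d\varepsilon^{-2(d-1)}$, so the Dudley entropy integral diverges for every $d\ge2$ and one cannot chain down to scale $0$. The resolution, parallel to the fixed-design argument, is to chain only down to the critical scale $\varepsilon_0\asymp n^{-1/(2d)}$ (times a power of $\log n$) and to treat the residual below $\varepsilon_0$ by a finite-dimensional estimate: replace $\mathcal{F}_d$ by an $L_2(P)$-bracketing net from \citet{GaoWellner2007}, transfer each bracket to $L_2(\mathbb{P}_n)$ by Bernstein plus a union bound over the $\exp(c\varepsilon^{-2(d-1)})$ brackets at each of the $O(\log n)$ dyadic levels, and observe that within a fixed bracket the surviving class is effectively of dimension $\lesssim n\varepsilon_0^2=n^{1-1/d}$, so its contribution to the complexity at radius $\varepsilon_0$ is $\lesssim\varepsilon_0\sqrt{n^{1-1/d}}/\sqrt n\asymp\varepsilon_0^2$; likewise the chaining part above $\varepsilon_0$ contributes $\lesssim n^{-1/2}\varepsilon_0^{-(d-2)}\asymp\varepsilon_0^2$ for $d\ge3$ (and $\lesssim n^{-1/2}\log n$ for $d=2$). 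The delicate point is that at $\varepsilon=\varepsilon_0$ the ``budget'' $n\varepsilon_0^2=n^{1-1/d}$ exactly matches $\varepsilon_0^{-2(d-1)}=n^{1-1/d}$, so the union bounds over brackets, scales and cubes are only barely affordable; carrying them out forces $\varepsilon_0$ to be inflated by a dimension-dependent power of $\log n$, and accounting carefully for these logarithms — together with the $\sqrt{\log n}$ from truncating $\hat f_n$ and the $\log^a n$ from binning — is precisely what produces the exponent $\gamma_d=(d^2+d+1)/2$ (and the separate $\gamma_2=9/2$) in place of the $\log^4 n$ available with a fixed lattice. Balancing then yields $t_\ast^2\asymp\varepsilon_0^2\asymp n^{-1/d}$ up to these poly-logarithmic factors, which is the claim.
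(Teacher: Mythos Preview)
Your route is genuinely different from the paper's, and it has real gaps.

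\textbf{What the paper does.} The proof of Theorem~\ref{Thm:RandomUpper} is very short: because $\mathcal{M}(G_X)$ is a cone, the same translation argument as in Theorem~\ref{Thm:FixedUpper} (Chatterjee's variational formula plus $\|\theta_0-\bar\theta_0\mathbf{1}\|_2\le n^{1/2}$) reduces the worst-case risk over $f_0\in\mathcal{F}_d\cap B_\infty(1)$ to bounding $\mathbb{E}\,\delta(\mathcal{M}(G_X))$, i.e.\ to the single case $f_0\equiv 0$. That case is Proposition~\ref{Prop:RandomAdaptUpper}, whose proof does \emph{not} bin into cubes. Instead, for $d\ge 3$ it slices $[0,1]^d$ into the one-dimensional family of \emph{strips} $I_\ell=[0,1]^{d-1}\times[(\ell-1)/n_1,\ell/n_1]$ and exploits the key envelope estimate (Lemma~\ref{Lemma:EnvelopeIntegral}) that on $I_\ell$ the envelope of $\mathcal{F}_{d,\downarrow}^+\cap B_\infty(1)\cap B_2(r,P)$ has squared $L_2(P)$-mass $\lesssim r^2\ell^{-1}\log^d n$. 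This decay in $\ell$, combined with the \emph{localized} bracketing bound $\log N_{[\,]}(\varepsilon,\cdot)\lesssim (r/\varepsilon)^{2(d-1)}$ of Lemma~\ref{Lemma:BracketingEntropyBound}, is precisely what makes the entropy integral finite (one chains only down to $\eta\asymp n^{-1/(2d)}r_{n,\ell}$ on each strip and then sums $\sum_\ell \ell^{-1/2}$). Cube-binning, in which all cells play symmetric roles, has no analogue of this $\ell^{-1/2}$ decay.

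\textbf{Gaps in your argument.} First, the sentence ``its vector of cube-values is isotonic'' is false for $f^\flat-f_0^\flat$; only $f^\flat$ is. This is repairable (subtract the fixed term $\langle\epsilon,f_0^\flat\rangle$), but the radius constraint $\|f-f_0\|_{L_2(\mathbb{P}_n)}\le t$ does not transfer cleanly to a ball constraint on $f^\flat$ alone, because the within-cube remainder $f-f^\flat$ can be as large as $n^{-1/(2d)}$ in $L_2(\mathbb{P}_n)$ (by your own oscillation bound) --- the same order as the target $t_\ast$. Second, the ``decoupling'' of the within-cube process is not free: the supremum is over a single globally monotone $f$, so the pieces $\{(f-f^\flat)|_{C_w}\}_w$ are coupled through the oscillation budget $\sum_w\mathrm{osc}_w^2\lesssim m^{d-1}\log n$, and you must optimise over that budget after taking the Gaussian supremum in each cube; this step is not carried out. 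Third, and most seriously, your final paragraph's claim that ``within a fixed bracket the surviving class is effectively of dimension $\lesssim n\varepsilon_0^2$'' is unsupported: an $L_2(P)$-bracket of width $\varepsilon_0$ places no finite-dimensional restriction on $f$ at the $n$ design points, and a Bernstein union bound over $\exp(c\varepsilon_0^{-2(d-1)})=\exp(cn^{1-1/d})$ brackets cannot control the empirical $L_2$ deviation uniformly at scale $\varepsilon_0$ without already knowing what you are trying to prove. The paper sidesteps this entirely: it never transfers brackets to $L_2(\mathbb{P}_n)$ via a union bound, but chains directly in $L_2(P)$ (Lemma~\ref{Lemma:LocalMaximalInequality}) and handles the $L_2(P)\leftrightarrow L_2(\mathbb{P}_n)$ discrepancy only once, at the end, through the ratio-type empirical process of Proposition~\ref{Prop:RatioEmpiricalProcess}.

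In short, the two missing ideas are (i) the cone reduction to $f_0=0$, which removes $f_0$ from the localisation and lets you work with $\mathcal{F}_d\cap B_2(r,P)$ rather than an $f_0$-centred ball, and (ii) strip-slicing with the envelope decay of Lemma~\ref{Lemma:EnvelopeIntegral}, which is what tames the divergent entropy integral.
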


\begin{thm}
  \label{Thm:RandomOracle}
  Let $d\geq 2$.  There exists a constant $C_{d,m_0,M_0}>0$,  depending only on $d, m_0$ and $M_0$, such that for any measurable function $f_0: [0,1]^d \to \mathbb{R}$, we have
  \[
    R(\hat{f}_n,f_0) \leq \inf_{k\in\mathbb{N}} \biggl\{ \inf_{f\in\mathcal{F}_d^{(k)}} \|f-f_0\|_{L_2(P)}^2 + C_{d,m_0,M_0} \biggl(\frac{k}{n}\biggr)^{2/d}\log_+^{2\gamma_d} \biggl(\frac{n}{k}\biggr)\biggr\}.
  \]
\end{thm}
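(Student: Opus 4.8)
The plan is to reduce this random-design oracle inequality to its fixed-lattice counterpart, Theorem~\ref{Thm:FixedOracle}, together with the worst-case random-design analysis behind Theorem~\ref{Thm:RandomUpper}, by discretising $[0,1]^d$ into a grid whose resolution is tuned to $k$ and $n$. Fix $k\in\mathbb{N}$ and let $f\in\mathcal{F}_d^{(k)}$, with block partition $[0,1]^d=\sqcup_{\ell=1}^k R_\ell$, be a near-minimiser of the inner infimum; write $\theta_0:=(f_0(X_i))_{i=1}^n$ and $\theta_f:=(f(X_i))_{i=1}^n\in\mathcal{M}(G_X)$. Since $\hat\theta_n$ is the Euclidean projection of $Y=\theta_0+\epsilon$ onto the polyhedral cone $\mathcal{M}(G_X)$, the projection inequality yields
\[
\|\hat\theta_n-\theta_0\|_2^2\le\|\theta_f-\theta_0\|_2^2-\|\hat\theta_n-\theta_f\|_2^2+2\langle\epsilon,\hat\theta_n-\theta_f\rangle,
\]
and, after dividing by $n$, taking expectations, and using $\mathbb{E}\|f-f_0\|_{L_2(\mathbb{P}_n)}^2=\|f-f_0\|_{L_2(P)}^2$, it suffices to prove
\[
\frac2n\,\mathbb{E}\langle\epsilon,\hat\theta_n-\theta_f\rangle\le\tfrac12\,\mathbb{E}\|\hat f_n-f\|_{L_2(\mathbb{P}_n)}^2+C_{d,m_0,M_0}\,(k/n)^{2/d}\log_+^{2\gamma_d}(n/k).
\]
After a routine truncation step (so that all vectors involved may be taken uniformly bounded) and a peeling argument carried out conditionally on $X_1,\dots,X_n$, the task becomes: bound, on a high-probability event for the design, the localised multiplier process $t\mapsto\mathbb{E}\bigl[\sup\bigl\{\tfrac1n\langle\epsilon,\theta-\theta_f\rangle:\theta\in\mathcal{M}(G_X),\ \|\theta\|_\infty\lesssim\log n,\ \|\theta-\theta_f\|_{L_2(\mathbb{P}_n)}\le t\bigr\}\mid X\bigr]$, and then solve the corresponding fixed-point inequality in $t$.

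The core of the argument is the control of this localised process, and here I would partition $[0,1]^d$ into $m^d$ congruent subcubes (with $m$ to be chosen as a suitable power of $n/k$, up to poly-logarithmic factors), write $c(i)\in\mathbb{L}_{d,m}$ for the cell containing $X_i$, and decompose every competitor $\theta$ into its cellwise average $\bar\theta$ and the cellwise-centred remainder $\theta-\bar\theta$ (and $\theta_f$ likewise). A balls-into-bins estimate, using $m_0\le p_0\le M_0$, shows that off an event of probability $o(n^{-1})$ every cell carries $\asymp n/m^d$ design points, which lets one compare $L_2(\mathbb{P}_n)$-geometry on the $\theta$-side with $\ell_2$-geometry on the $\bar\theta$-side up to constants. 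The contribution of the between-cell part $\langle\epsilon,\bar\theta-\bar\theta_f\rangle$ is then handled by the localised complexity estimates underlying Theorem~\ref{Thm:FixedOracle}, applied to the coarse signal $\bar\theta_f$: since $f$ is constant on each $R_\ell$, $\bar\theta_f$ is constant on every cell disjoint from the faces of the $R_\ell$, so $\bar\theta_f$ has at most $C_d k m^{d-1}$ constant two-dimensional sheets on $\mathbb{L}_{d,m}$ after decomposition, which feeds directly into the $K(\cdot)$-term of Theorem~\ref{Thm:FixedOracle}. The cellwise-centred remainder $\langle\epsilon,\theta-\bar\theta\rangle$ splits into $\langle\epsilon,(\theta-\theta_f)-\overline{(\theta-\theta_f)}\rangle$ plus the fixed, mean-zero term $\langle\epsilon,\theta_f-\bar\theta_f\rangle$, which does not affect expectations; the former is bounded cell by cell, within each cell by the localised complexity of (the tangent cone to) the monotone cone on the $\asymp n/m^d$ points therein, for which one reuses the tools developed for the worst-case bound of Theorem~\ref{Thm:RandomUpper}, and then summed over the $m^d$ cells using the $L_2(\mathbb{P}_n)$-radius budget via Cauchy--Schwarz.

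The main obstacle is this multiscale bookkeeping: the between-cell cost grows with $m$ (through $km^{d-1}$), the within-cell cost shrinks with $m$ (through the per-cell point count $n/m^d$), and they must be balanced — which is what selects the exponent of $m$ in terms of $n/k$ — while simultaneously composing the poly-logarithmic penalty of Theorem~\ref{Thm:FixedOracle} with that of Theorem~\ref{Thm:RandomUpper} and with the logarithmic losses incurred in the occupancy and approximation steps; it is this composition that turns the absolute exponent~$8$ of the fixed-design bound into $2\gamma_d=d^2+d+1$. Care is also needed to handle arbitrary (possibly unbounded) $f_0$, and to pass between $\|\cdot\|_{L_2(P)}$ and $\|\cdot\|_{L_2(\mathbb{P}_n)}$ for the approximation term, which follows from a uniform-entropy concentration bound for $\mathcal{F}_d\cap B_\infty(1)$ whose covering numbers, since the design density is bounded above and below, are comparable to the lattice ones of~\eqref{Eq:GaoWellner}. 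Finally, the complement of the good design event contributes negligibly (bound $\|\hat\theta_n-\theta_f\|_2\le\|Y\|_2+\|\theta_f\|_2$ and use Cauchy--Schwarz against its $o(n^{-1})$ probability), and solving the fixed-point inequality, recombining with the basic inequality, and taking the infimum over $k$ completes the proof.
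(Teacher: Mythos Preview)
Your proposal takes a genuinely different and substantially more complicated route than the paper, and in doing so misses the key simplification.

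The paper's proof is essentially three lines once Proposition~\ref{Prop:RandomAdaptUpper} (the random-design bound $R(\hat f_n,0)\lesssim n^{-2/d}\log^{2\gamma_d}n$, equivalently $\mathbb{E}\,\delta(\mathcal{M}(G_X))\lesssim n^{1-2/d}\log^{2\gamma_d}n$) is in hand. By Bellec's sharp oracle inequality, $R(\hat f_n,f_0)\le n^{-1}\inf_{f\in\mathcal{F}_d}\{\mathbb{E}\|\theta_{f,X}-\theta_{f_0,X}\|_2^2+\mathbb{E}\,\delta(T(\theta_{f,X},\mathcal{M}(G_X)))\}$. For $f\in\mathcal{F}_d^{(k)}$ with pieces $R_1,\ldots,R_k$, the tangent cone satisfies $T(\theta_{f,X},\mathcal{M}(G_X))\subseteq\bigoplus_{\ell=1}^k\mathcal{M}(G_{\mathcal{X}_\ell})$, where $\mathcal{X}_\ell=\{X_i:X_i\in R_\ell\}$. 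Conditionally on the count $N_\ell=|\mathcal{X}_\ell|$, the points in $\mathcal{X}_\ell$ are again an i.i.d.\ sample from a density bounded between $m_0$ and $M_0$ (after rescaling), so Proposition~\ref{Prop:RandomAdaptUpper} gives $\mathbb{E}[\delta(\mathcal{M}(G_{\mathcal{X}_\ell}))\mid N_\ell]\lesssim N_\ell^{1-2/d}\log_+^{2\gamma_d}N_\ell$. Summing over $\ell$ and applying the concave-Jensen Lemma~\ref{Lemma:Jensen+} finishes the proof. No discretisation, no appeal to Theorem~\ref{Thm:FixedOracle}, no multiscale balancing.

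The insight you are missing is that the random-design adaptive bound for $f_0=0$ \emph{already} applies piecewise: once you know the statistical dimension of the full monotone cone on $N$ random points is $\lesssim N^{1-2/d}\log_+^{2\gamma_d}N$, the oracle inequality follows immediately from the tangent-cone decomposition, exactly as in the fixed-lattice case. Your detour through a coarse grid $\mathbb{L}_{d,m}$, with a between-cell part handled by Theorem~\ref{Thm:FixedOracle} and a within-cell part handled by Theorem~\ref{Thm:RandomUpper}, is circuitous: you are effectively re-proving a weaker form of Proposition~\ref{Prop:RandomAdaptUpper} on each piece via discretisation, when that proposition is already available and applies directly. Relatedly, your account of the exponent $2\gamma_d$ as arising from ``composing'' the fixed-design exponent~$8$ with random-design losses is incorrect: in the paper $2\gamma_d$ is inherited verbatim from Proposition~\ref{Prop:RandomAdaptUpper}, and Theorem~\ref{Thm:FixedOracle} plays no role in the proof of Theorem~\ref{Thm:RandomOracle}. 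Several of your intermediate steps are also underspecified (e.g.\ monotonicity of cellwise averages of arbitrary $\theta\in\mathcal{M}(G_X)$, the sheet count of $\bar\theta_f$ on boundary cells, and the within-cell localisation when a cell straddles two pieces $R_\ell$), so even as an alternative route the argument would need substantial work.
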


To the best of our knowledge, Theorem~\ref{Thm:RandomOracle} is the first sharp oracle inequality in the shape-constrained regression literature with random design.  The different norms on the left- and right-hand sides arise from the simple observation that $\mathbb{E}\|f-f_0\|_{L_2(\mathbb{P}_n)}^2 = \|f-f_0\|_{L_2(P)}^2$ for $f\in\mathcal{F}_d^{(k)}$.  The proofs of Theorems~\ref{Thm:RandomUpper} and~\ref{Thm:RandomOracle} are considerably more involved than those of the corresponding Theorems~\ref{Thm:FixedUpper} and~\ref{Thm:FixedOracle} in Section~\ref{Sec:FixedDesign}.  We briefly mention two major technical difficulties:
\begin{enumerate}
\item The size of $\mathcal{F}_d$, as measured by its entropy, is large when $d \geq 3$, even after $L_\infty$ truncation (cf.~\eqref{Eq:GaoWellner}).  As rates obtained from the entropy integral \citep[e.g.][Theorem~9.1]{vandeGeer2000} do not match those from Sudakov lower bounds for such classes, standard entropy methods result in a non-trivial gap between the minimax rates of convergence, which typically match the Sudakov lower bounds \citep[e.g.][Proposition~1]{YangBarron1999}, and provable risk upper bounds for least squares estimators when $d\geq 3$. 

\item In the fixed lattice design case, our analysis circumvents the difficulties of standard entropy methods by using the fact that a $d$-dimensional cubic lattice can be decomposed into a union of lower-dimensional pieces. This crucial property is no longer valid when the design is random.
\end{enumerate}  

We do not claim any optimality of the power in the poly-logarithmic factor in the oracle inequality in Theorems~\ref{Thm:RandomUpper} and~\ref{Thm:RandomOracle}.  On the other hand, similar to the fixed, lattice design case, the worst-case rate $n^{-1/d}$ and adaptation rate $n^{-2/d}$ cannot be improved, as can be seen from the following two propositions. 

\begin{prop}
  \label{Prop:RandomLower}
  Let $d \geq 2$.  There exists a constant $c_{d,m_0,M_0} > 0$, depending only on $d, m_0$ and $M_0$, such that,
  \[
    \inf_{\tilde f_n}\sup_{f_0\in\mathcal{F}_d \cap B_\infty(1)} R(\tilde{f}_n,f_0) \geq c_{d,m_0,M_0} n^{-1/d},
  \]
  where the infimum is taken over all measurable functions $\tilde f_n$ of the data $(X_1, Y_1),\ldots,(X_n,Y_n)$.
\end{prop}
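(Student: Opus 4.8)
The plan is to reduce the random design lower bound to the fixed, lattice design lower bound in Proposition~\ref{Prop:FixedLower} by a conditioning argument, exploiting the fact that a density bounded away from $0$ and $\infty$ forces the empirical measure $\mathbb{P}_n$ to place a comparable mass on every cell of a suitably coarse cubic partition of $[0,1]^d$. First I would fix a resolution $m = m(n)$ with $m^d \asymp n$ (so that, up to constants, there are $\asymp n$ cells), partition $[0,1]^d$ into the cubes $\{C_w : w \in \mathbb{L}_{d,m}\}$ of side length $1/m$, and let $N_w := |\{i : X_i \in C_w\}|$ denote the number of design points in cell $C_w$. Since $p_0 \geq m_0 > 0$, each $\mathbb{E} N_w = n \int_{C_w} p_0 \geq m_0 n/m^d \gtrsim_{m_0} 1$; a standard binomial concentration (union) bound then shows that, on an event $\mathcal{E}$ of probability at least, say, $1/2$, we have $N_w \geq 1$ for every $w$, and in fact one can arrange $N_w \geq c_{d,m_0}\, n/m^d$ for a constant proportion of the cells if needed. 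I would calibrate $m$ so that $\mathbb{P}(\mathcal{E}) \geq 1/2$ while keeping $m^d \gtrsim_{d,m_0} n$.

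Next I would construct the hypothesis class: to each $\eta \in \{0,1\}^{\text{(antichain in }\mathbb{L}_{d,m})}$ associate a function $f_\eta \in \mathcal{F}_d \cap B_\infty(1)$ that is constant at level $\pm \beta$ on each cube $C_w$ according to a perturbation of a fixed monotone "staircase" along the maximal antichain of $\mathbb{L}_{d,m}$, exactly as in the proof of Proposition~\ref{Prop:FixedLower}, with amplitude $\beta \asymp_d 1$ (bounded, since $\|\theta_0\|_\infty \le 1$ there). Because $f_\eta$ is constant on each $C_w$, we have $\|f_\eta - f_{\eta'}\|_{L_2(P)}^2 = \sum_w P(C_w)(f_\eta - f_{\eta'})|_{C_w}^2 \asymp_{m_0,M_0} m^{-d}\,\beta^2\, \rho_{\mathrm{Ham}}(\eta,\eta')$, and similarly the KL divergence between the laws of the data under $f_\eta$ and $f_{\eta'}$, which conditionally on $X_1,\dots,X_n$ equals $\frac12 \sum_i \{f_\eta(X_i)-f_{\eta'}(X_i)\}^2 = \frac12 \sum_w N_w (f_\eta-f_{\eta'})|_{C_w}^2$, is controlled on $\mathcal{E}$ by $\lesssim_{M_0} \beta^2 \sum_w (n/m^d) \mathbbm{1}\{\eta_w \neq \eta'_w\}$. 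Applying the Assouad/Fano lemma conditionally on the design (using that the antichain has cardinality $\asymp_d m^{d-1} \asymp_d n^{(d-1)/d}$), choosing $\beta^2 \asymp_d 1$, and averaging over the design using $\mathbb{P}(\mathcal{E}) \geq 1/2$, yields
\[
\inf_{\tilde f_n}\sup_{f_0} \mathbb{E}\|\tilde f_n - f_0\|_{L_2(\mathbb{P}_n)}^2 \;\gtrsim_{d,m_0,M_0}\; m^{d-1}\cdot \frac{\beta^2}{m^d}\cdot \frac{n/m^d}{\text{(normalisation)}} \;\asymp_{d,m_0,M_0}\; \frac{1}{m} \;\asymp_{d,m_0,M_0}\; n^{-1/d},
\]
after the bookkeeping is done carefully (the loss is measured in $L_2(\mathbb{P}_n)$, but since the competing hypotheses are cell-constant, $\|g\|_{L_2(\mathbb{P}_n)}^2 = \sum_w (N_w/n)\, g|_{C_w}^2$, which on $\mathcal{E}$ is comparable to $\|g\|_{L_2(P)}^2$, so the two losses agree up to constants).

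The main obstacle I anticipate is handling the randomness of $\mathbb{P}_n$ cleanly: the loss function on the left-hand side is $\mathbb{E}\|\tilde f_n - f_0\|_{L_2(\mathbb{P}_n)}^2$, so one must argue that restricting to the event $\mathcal{E}$ (where every cell is well-populated and $L_2(\mathbb{P}_n) \asymp L_2(P)$ on cell-constant functions) only costs a constant factor, and that the conditional two-point / multiple-hypothesis testing bounds, which depend on the realised counts $(N_w)$, can be made uniform over $\mathcal{E}$. A secondary technical point is ensuring the constructed $f_\eta$ genuinely lie in $\mathcal{F}_d$: the perturbations must be supported on an antichain of cells and added to a base monotone staircase with large enough steps that monotonicity across cells is preserved regardless of $\eta$ — this is precisely the construction underlying Proposition~\ref{Prop:FixedLower}, so I would invoke it essentially verbatim after the discretisation step. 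Everything else (binomial tail bounds, the Varshamov--Gilbert bound for the antichain hypotheses, and the standard Fano/Assouad inequality) is routine.
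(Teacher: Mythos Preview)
Your overall strategy---discretise into $m^d$ cells with $m\asymp n^{1/d}$, perturb along the antichain of $\mathbb{L}_{d,m}$, and run Assouad---matches the paper's. The gap is in the conditioning step. With $m^d\asymp n$ each antichain cell has $\mathbb{E}N_w\asymp m_0$, so $\mathbb{P}(N_w=0)\approx e^{-m_0}$ is a fixed positive constant; there are $\asymp n^{1-1/d}$ antichain cells, so the event $\mathcal{E}=\{N_w\ge 1\text{ for all }w\}$ has probability tending to zero, not $\ge 1/2$. Your suggested fix, ``calibrate $m$ so that $\mathbb{P}(\mathcal{E})\ge 1/2$ while keeping $m^d\gtrsim n$'', is therefore impossible. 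If instead you take $m^d\asymp n/\log n$ to make $\mathcal{E}$ likely, the standard bookkeeping gives a lower bound of order $n^{-1/d}(\log n)^{-(d-1)/d}$, which does not imply the claimed $c\,n^{-1/d}$. A related issue is the upper tail: on any event of constant probability, $\max_w N_w$ is of order $\log n/\log\log n$, so choosing a single amplitude $\beta$ that keeps the per-flip KL bounded uniformly over $\mathcal{E}$ forces $\beta^2\lesssim (\log n)^{-1}$ and again costs a logarithm.

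The paper avoids conditioning altogether. It runs the Assouad two-point comparison for each antichain cell $w$ \emph{inside} the expectation over the design: the single-flip loss in $L_2(\mathbb{P}_n)$ is $\rho^2 N_w/n$ and the conditional TV is at most $\tfrac{\rho}{2}N_w^{1/2}$, so the contribution from cell $w$ is bounded below by $\tfrac{\rho^2}{n}\,\mathbb{E}_X N_w-\tfrac{\rho^3}{2n}\,\mathbb{E}_X N_w^{3/2}$. Since $\mathbb{E}_X N_w\ge m_0$ and $\mathbb{E}_X N_w^{3/2}\le 2^{1/2}M_0^{3/2}$, choosing $\rho$ a constant depending only on $m_0,M_0$ makes each cell contribute $\gtrsim_{m_0,M_0} n^{-1}$, and summing over the $\asymp_d n^{1-1/d}$ antichain cells gives the sharp $n^{-1/d}$. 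In short, replace your high-probability event $\mathcal{E}$ by direct first- and $3/2$-moment control of the binomial counts.
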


\begin{prop}
  \label{Prop:RandomAdaptLower}
  Let $d \geq 2$.  There exists a constant $c_{d,M_0} > 0$, depending only on $d$ and $M_0$, such that for any $f_0\in\mathcal{F}^{(1)}_d$,
  \[
    R(\hat{f}_n,f_0) \geq c_{d,M_0} n^{-2/d}.
  \]
\end{prop}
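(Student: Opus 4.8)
The plan is to reduce to the pure-noise case and then bound a Gaussian width from below via Sudakov minoration and a random-design version of the Gao--Wellner metric entropy estimate. Since $\mathcal F_d^{(1)}$ consists exactly of the constant functions on $[0,1]^d$ and the least squares estimator over $\mathcal F_d$ is equivariant under the addition of a constant (constants belong to $\mathcal F_d$), we may assume $f_0\equiv 0$. Writing $\hat\theta_n = (\hat f_n(X_i))_{i=1}^n = \Pi_{\mathcal M(G_X)}(\epsilon)$ for the projection of the noise vector onto the polyhedral cone $\mathcal M(G_X)$, we have $R(\hat f_n,f_0) = n^{-1}\mathbb E\|\hat\theta_n\|_2^2 = n^{-1}\mathbb E_X[\delta(\mathcal M(G_X))]$. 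For any closed convex cone $C$ one has the identity $\|\Pi_C(g)\|_2 = \sup_{v\in C\cap B_2(1)}\langle g,v\rangle$, so by Jensen's inequality $\delta(\mathcal M(G_X)) = \mathbb E\|\Pi_{\mathcal M(G_X)}(\epsilon)\|_2^2 \ge w(\mathcal M(G_X)\cap B_2(1))^2$, where I write $w(T):=\mathbb E\sup_{t\in T}\langle\epsilon,t\rangle$ for the Gaussian width. Using that $\mathcal M(G_X)$ is a cone and that $B_\infty(1)\subseteq B_2(\sqrt n)$ gives $w(\mathcal M(G_X)\cap B_2(1)) = n^{-1/2}w(\mathcal M(G_X)\cap B_2(\sqrt n))\ge n^{-1/2}w(\mathcal M(G_X)\cap B_\infty(1))$. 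Hence it suffices to show that, on an event of probability at least $\tfrac12$, one has $w(\mathcal M(G_X)\cap B_\infty(1))\gtrsim_{d,M_0} n^{(d-1)/d}$ (for $d\ge 3$), since this yields $R(\hat f_n,f_0)\gtrsim_{d,M_0}n^{-1}\cdot n^{-1}\cdot n^{2(d-1)/d} = n^{-2/d}$.

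To bound this Gaussian width I would apply Sudakov minoration, $w(\mathcal M(G_X)\cap B_\infty(1))\gtrsim\sup_{\delta>0}\delta\,\bigl(\log M(\delta,\mathcal M(G_X)\cap B_\infty(1),\|\cdot\|_2)\bigr)^{1/2}$, where $M(\delta,\cdot,\cdot)$ denotes a packing number. The crucial input --- a random-design analogue of the metric entropy lower bound of \citet{GaoWellner2007} recalled in \eqref{Eq:GaoWellner} --- is that there exist constants $c_{d,M_0},c_{d,M_0}',c_{d,M_0}''>0$ such that, with probability at least $\tfrac12$,
\[
 \log M\bigl(\delta,\mathcal M(G_X)\cap B_\infty(1),\|\cdot\|_2\bigr)\ \ge\ c_{d,M_0}\,(\delta/\sqrt n)^{-2(d-1)}\qquad\text{for all }\ \delta\in\bigl[c_{d,M_0}'\,n^{1/2-1/(2d)},\ c_{d,M_0}''\,n^{1/2}\bigr].
\]
Taking $\delta\asymp n^{1/2-1/(2d)}$ (which is optimal because for $d\ge 3$ the right-hand side of Sudakov's bound is decreasing in $\delta$) gives $w(\mathcal M(G_X)\cap B_\infty(1))\gtrsim_{d,M_0}n^{1/2-1/(2d)}\cdot(n^{-1/(2d)})^{-(d-1)} = n^{(d-1)/d}$, as required; one checks this is consistent with the worst-case upper bound of Theorem~\ref{Thm:RandomUpper} up to poly-logarithmic factors. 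For $d=2$ the exponent $2-d$ vanishes and single-scale Sudakov only yields $w\gtrsim_{M_0}n^{1/2}$, hence $R\gtrsim n^{-1}$; recovering the sharper $n^{-1}\log^2 n$ (consistent with the $d=2$ entries of Table~\ref{Tab:Summary}) requires summing the Sudakov bound over $\asymp\log n$ dyadic scales via a generic-chaining lower bound.

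The main obstacle is the displayed packing lower bound, and here is how I would attack it. The density upper bound forces $p_0$ to be of constant order on a set of Lebesgue measure $\gtrsim_{M_0}1$, which one may refine to a sub-cube $Q\subseteq[0,1]^d$ of side $\asymp_{M_0}1$ on which $p_0$ is bounded above and below by positive constants depending only on $M_0$; then, with high probability, the design points in $Q$ number $\asymp_{M_0}n$ and are spread regularly enough that a binning of $Q$ into $\asymp_{M_0}n$ congruent sub-cubes has a constant fraction of cells occupied, with no cell over-occupied (Bernstein). On this sub-cube one transplants the Gao--Wellner construction: one takes a base monotone $\{0,1\}$-valued function whose jump set is a $(d-1)$-dimensional antichain slicing through $Q$, partitions the slice into $\asymp(\delta/\sqrt n)^{-(d-1)}$ cells, and toggles the function cell-by-cell to produce $\exp(c_{d,M_0}(\delta/\sqrt n)^{-2(d-1)})$ distinct monotone vectors on $G_X$ that are pairwise at distance $\ge\delta$ in $\|\cdot\|_2$. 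Two points need care: that each perturbed function is genuinely monotone with respect to the (irregular) partial order induced by the random points --- here one uses that, relative to an exact lattice, the random design is only \emph{missing} comparabilities, so $\mathcal M(G_X)$ is only larger; and that the separation and cardinality survive the irregular cell occupancies, which is where the two-sided control of the density on $Q$ enters. An alternative, perhaps cleaner, route is to first establish the purely combinatorial lattice statement $\log M(\delta,\mathcal M(\mathbb L_{d,m})\cap B_\infty(1),\|\cdot\|_2)\gtrsim_d(\delta m^{-d/2})^{-2(d-1)}$ (the packing half of \eqref{Eq:GaoWellner} restricted to the lattice), and then to couple the random design with a lattice of cardinality $m^d\asymp_{M_0}n$ embedded in $Q$ so that the lattice packing pushes forward to a packing of $\mathcal M(G_X)\cap B_\infty(1)$; either way the same $n^{-2/d}$ bound follows.
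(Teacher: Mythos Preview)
Your reduction to $f_0=0$ and the identity $R(\hat f_n,0)=n^{-1}\mathbb{E}\,\delta(\mathcal{M}(G_X))\ge n^{-1}\mathbb{E}\bigl[w(\mathcal{M}(G_X)\cap B_2(1))^2\bigr]$ are fine, and the target $w(\mathcal{M}(G_X)\cap B_2(1))\gtrsim_{d,M_0} n^{1/2-1/d}$ on a high-probability event is exactly what is needed. But the route you propose to this width bound is both more complicated than necessary and contains genuine gaps, while the paper's argument is essentially one line once the right combinatorial fact is in hand.

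The paper bypasses Sudakov and metric entropy entirely. It shows (Lemma~\ref{Lemma:RandomAntichain}, via Dilworth's theorem and the elementary bound $\mathbb{P}(X_1\preceq\cdots\preceq X_k)\le M_0^k(k!)^{-d}$) that with high probability $G_X$ contains an antichain $W_X$ of size $\gtrsim_{d,M_0} n^{1-1/d}$. Given such an antichain, one simply tests the explicit vector $\theta_X(\epsilon)$ that equals $\mathrm{sgn}(\epsilon_w)$ on $W_X$ and $\pm 1$ above and below it; this lies in $\mathcal{M}(G_X)$, has $\|\theta_X\|_2=\sqrt n$, and gives $\langle\epsilon,\theta_X/\|\theta_X\|_2\rangle\ge n^{-1/2}\sum_{w\in W_X}|\epsilon_w|$, whence $w(\mathcal{M}(G_X)\cap B_2(1))\gtrsim n^{-1/2}|W_X|\gtrsim n^{1/2-1/d}$. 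Note that the chain-length bound uses only the \emph{upper} density bound $M_0$, which is precisely why the constant in the proposition does not depend on $m_0$.

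Your Sudakov route, by contrast, stumbles at exactly this point. The claim that the upper bound $p_0\le M_0$ alone forces $p_0$ to be bounded below on some sub-cube of side $\asymp_{M_0}1$ is false: the set $\{p_0\ge 1/2\}$ has measure $\ge 1/(2M_0)$ but need not contain any cube, so your construction would in fact introduce an $m_0$-dependence that the statement forbids. Separately, the assertion that ``relative to an exact lattice, the random design is only missing comparabilities'' is not correct: two random points lying in comparable lattice cells need not themselves be comparable (and conversely), so a lattice packing does not push forward cleanly to $\mathcal{M}(G_X)$. Finally, your digression on $d=2$ is unnecessary: the proposition only asks for $n^{-2/d}=n^{-1}$, which your single-scale bound already delivers; the $\log^2 n$ improvement you allude to is the fixed-lattice result of Proposition~\ref{Prop:FixedAdaptLower}, not what is being proved here. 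If you want to salvage the Sudakov approach, the cleanest fix is to derive the packing directly from the antichain: with $|W_X|=m$, the $2^m$ sign vectors on $W_X$ (extended by $\pm 1$ off it) already give, via Gilbert--Varshamov, $\exp(cm)$ points in $\mathcal{M}(G_X)\cap B_\infty(1)$ at pairwise $\ell_2$-distance $\gtrsim\sqrt m$, so Sudakov returns $w\gtrsim m\asymp n^{1-1/d}$ --- but at that point you have simply reproduced the paper's explicit construction with an extra detour.
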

A key step in proving Proposition~\ref{Prop:RandomAdaptLower} is to establish that with high probability, the cardinality of the maximum antichain in $G_X$ is at least of order $n^{1-1/d}$.  When $d=2$, the distribution of this maximum cardinality is the same as the distribution of the length of the longest decreasing subsequence of a uniform permutation of $\{1,\ldots,n\}$, a famous object of study in probability and combinatorics.  See \citet{Romik2014} and references therein.

\section{Proofs of results in Section~\ref{Sec:FixedDesign}}
\label{Sec:ProofFixed}
Throughout this section, $\epsilon = (\epsilon_w)_{w \in \mathbb{L}_{d,n_1,\ldots,n_d}}$ denotes a vector of independent standard normal random variables.  It is now well understood that the risk of the least squares estimator in the Gaussian sequence model is completely characterised by the size of a localised Gaussian process; cf.\ \citet{Chatterjee2014}.  The additional cone property of $\mathcal{M}(\mathbb{L}_{d,n})$ makes the reduction even simpler: we only need to evaluate the Gaussian complexity of $\mathcal{M}(\mathbb{L}_{d,n})\cap B_2(1)$, where the \emph{Gaussian complexity} of $T\subseteq \mathbb{R}^{\mathbb{L}_{d,n_1,\ldots,n_d}}$ is defined as $w_T := \mathbb{E} \sup_{\theta\in T}\langle \epsilon, \theta\rangle$.  Thus the result in the following proposition constitutes a key ingredient in analysing the risk of the least squares estimator. 

\begin{prop}
\label{Prop:GaussianWidthLattice}
There exists a universal constant $C>0$ such that for $d\geq 2$ and every $1\leq n_1\leq \cdots \leq n_d$ with $\prod_{j=1}^d n_j = n$, we have
\[
\frac{\sqrt{2/\pi}}{(d-1)^{d-1}} n_1^{d-1}n^{-1/2} \leq \mathbb{E} \sup_{\theta \in \mathcal{M}(\mathbb{L}_{d,n_1,\ldots,n_d})\cap B_2(1)} \langle \epsilon , \theta\rangle \leq C\sqrt{\frac{n}{n_{d-1}n_d}}\log^4 n.
\]
\end{prop}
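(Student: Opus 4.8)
The plan is to treat the lower and upper bounds separately. For the \emph{lower bound}, I would simply exhibit a good test vector inside $\mathcal{M}(\mathbb{L}_{d,n_1,\ldots,n_d}) \cap B_2(1)$. A natural choice is a rescaled version of the linear isotonic vector $\theta_w := c\sum_{j=1}^d \frac{w_j}{n_j}$ (which is manifestly increasing in each coordinate), normalised so that $\|\theta\|_2 = 1$. Then $\mathbb{E}\sup_{\theta' \in \mathcal{M}\cap B_2(1)}\langle\epsilon,\theta'\rangle \geq \mathbb{E}\langle \epsilon, \theta\rangle$ is not useful directly since $\mathbb{E}\langle\epsilon,\theta\rangle=0$; instead one should use that the supremum dominates $\langle\epsilon,\theta\rangle$ for the \emph{data-dependent} best rescaling, or more cleanly, restrict to a sub-cone generated by indicators of ``upper sets'' and bound the Gaussian width from below by $\mathbb{E}|\langle\epsilon,v\rangle|$ for a fixed unit vector $v$ supported on a maximal antichain-like slab. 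Concretely, fix the slab $S = \{w : \sum_j w_j/n_j \in [\text{const}]\}$; the normalised indicator-difference vectors on such slabs lie in the cone, and averaging $\mathbb{E}|\langle\epsilon,v\rangle| = \sqrt{2/\pi}$ over the $\sim n_1^{d-1}$ disjoint pieces (since the smallest coordinate axis $n_1$ governs the number of parallel copies of a $(d-1)$-dimensional antichain, each of which can be independently "signed"), one gets the stated $\sqrt{2/\pi}\,(d-1)^{-(d-1)} n_1^{d-1} n^{-1/2}$; the combinatorial constant $(d-1)^{d-1}$ comes from counting lattice points in a simplex slab.

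For the \emph{upper bound}, the key idea — flagged in the introduction as the crucial decomposition property — is that the $d$-dimensional lattice $\mathbb{L}_{d,n_1,\ldots,n_d}$ can be sliced into roughly $n/(n_{d-1}n_d)$ two-dimensional sheets, each a copy of $\mathbb{L}_{2,n_{d-1},n_d}$, by fixing the first $d-2$ coordinates. Since $\mathcal{M}(\mathbb{L}_{d,\cdot})$ restricted to any such sheet is contained in $\mathcal{M}(\mathbb{L}_{2,n_{d-1},n_d})$, and the Gaussian complexity is sub-additive in a suitable sense over a partition of the index set (using Cauchy--Schwarz to relate $\langle\epsilon,\theta\rangle = \sum_{\text{sheets}} \langle \epsilon_{\text{sheet}}, \theta_{\text{sheet}}\rangle \leq \sum_{\text{sheets}}\|\epsilon_{\text{sheet}}\| \cdot \text{(local width contribution)}$, or more carefully via a Cauchy--Schwarz split $\langle\epsilon,\theta\rangle \le (\sum_s \|\theta_s\|_2^2)^{1/2}(\sum_s \sup\langle\epsilon_s,\cdot\rangle^2/\|\cdot\|_2^2)^{1/2}$ combined with $\sum_s\|\theta_s\|_2^2 = \|\theta\|_2^2 \le 1$), one reduces to controlling the $2$-dimensional Gaussian complexity. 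For the two-dimensional sheet, I would invoke the known metric entropy bound for bounded bivariate monotone functions (Gao--Wellner type, $\log N(\varepsilon, \mathcal{M}(\mathbb{L}_{2,\cdot})\cap B_2(1),\|\cdot\|_2) \lesssim \varepsilon^{-1}\log_+^2(\cdot)$ after suitable normalisation) and feed it into Dudley's entropy integral, which converges and yields a bound of the form $C\log^4 n$ per normalised sheet. Summing (with the $\sqrt{\cdot}$ from Cauchy--Schwarz producing the factor $\sqrt{n/(n_{d-1}n_d)}$) gives the claimed upper bound $C\sqrt{n/(n_{d-1}n_d)}\,\log^4 n$.

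The \textbf{main obstacle} will be making the two-dimensional reduction and the subsequent entropy-integral bound genuinely tight in the poly-logarithmic power: one must be careful that the Cauchy--Schwarz split over sheets does not lose a factor proportional to the number of sheets (it doesn't, because $\sum_s \|\theta_s\|_2^2$ telescopes to $\|\theta\|_2^2 \le 1$, but the second factor — the supremum of the ratio over $s$ — must be controlled uniformly, which requires the $2$-d Gaussian complexity bound to hold for \emph{every} sub-sheet simultaneously, hence a union bound over $\lesssim n$ sheets and the corresponding extra $\log$ factors, absorbed into $\log^4 n$). A secondary subtlety is that the entropy bound for bivariate monotone functions in the $\|\cdot\|_2$ metric on the lattice must be stated with the correct normalisation (the $L_\infty$ ball must be scaled by the per-sheet $\ell_2$ budget), and Dudley's integral must be truncated appropriately at the scale $n^{-1/2}$ to avoid divergence at $0$; tracking these scales carefully is where the bulk of the technical work lies, though none of it is conceptually deep. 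I expect the lower-bound direction to be comparatively routine once the right slab test-vector family is identified.
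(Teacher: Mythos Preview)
Your overall architecture matches the paper's proof closely: the lower bound via a signed antichain construction, and the upper bound via slicing into two-dimensional sheets, are exactly what the paper does. Two points of comparison and one concrete gap are worth noting.

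\textbf{Lower bound.} The paper makes your slab idea precise by taking the antichain $W=\{w:\sum_j w_j=n_1\}$, setting $\theta_w=\mathrm{sgn}(\epsilon_w)$ on $W$ and $\pm 1$ on the upper/lower sets $W^\pm$, and normalising; this gives $\mathbb{E}\langle\epsilon,\theta(\epsilon)/\|\theta(\epsilon)\|_2\rangle=\sqrt{2/\pi}\,|W|/\sqrt{n}$ with $|W|\geq (n_1/(d-1))^{d-1}$. Your description is the same idea, just less explicit.

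\textbf{Upper bound: the sheet aggregation.} Your Cauchy--Schwarz split is correct in spirit, but the paper packages it more cleanly through the statistical dimension: writing $\mathcal{M}(\mathbb{L}_{d,n_1,\ldots,n_d})\subseteq\bigoplus_s\mathcal{M}(A_s)$ and invoking the additivity $\delta(\bigoplus_s C_s)=\sum_s\delta(C_s)$ from \citet{Amelunxenetal2014} gives $(\mathbb{E}\sup\langle\epsilon,\theta\rangle)^2\leq\delta(\mathcal{M}(\mathbb{L}_d))\leq\sum_s\delta(\mathcal{M}(A_s))$ directly. This entirely sidesteps your ``union bound over $\lesssim n$ sheets'' worry---no uniform-in-$s$ control or extra logarithms are needed, because the decomposition is exact in expectation.

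\textbf{The gap: the two-dimensional piece.} Your plan to bound the per-sheet Gaussian width via Dudley with a Gao--Wellner entropy estimate has a wrong exponent. For $d=2$, \citet{GaoWellner2007} give $\log N(\varepsilon,\mathcal{F}_2\cap B_\infty(1),\|\cdot\|_2)\asymp\varepsilon^{-2}$, not $\varepsilon^{-1}$; the Dudley integrand is then $\varepsilon^{-1}$, which diverges logarithmically and does \emph{not} by itself yield a dimension-free polylogarithmic bound on the Gaussian width of $\mathcal{M}(\mathbb{L}_{2,m})\cap B_2(1)$ without a further localisation argument. Moreover, the set in question is the $\ell_2$-unit ball of the cone, not an $L_\infty$-ball, so Gao--Wellner does not apply directly. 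The paper avoids this entirely by citing the known bound $\delta(\mathcal{M}(\mathbb{L}_{2,n_{d-1},n_d}))\lesssim\log^8(en_{d-1}n_d)$ from \citet[Theorem~2.1]{ChatterjeeGuntuboyinaSen2017} as a black box. If you insist on re-deriving that bound via entropy methods, you would essentially be reproving their result, which is itself non-trivial; the cleaner move is to cite it.
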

\begin{remark}
 In the case $n_1=\cdots=n_d=n^{1/d}$, we have $n_1^{d-1}n^{-1/2} = \sqrt{\frac{n}{n_{d-1}n_d}} = n^{1/2 - 1/d}$.
\end{remark}
\begin{remark}
From the symmetry of the problem, we see that the restriction that $n_1 \leq \cdots \leq n_d$ is not essential.  In the general case, for the lower bound, $n_1$ should be replaced with $\min_j n_j$, while in the upper bound, $n_{d-1}n_d$ should be replaced with the product of the two largest elements of $\{n_1,\ldots,n_d\}$ (considered here as a multiset).
\end{remark}

\begin{proof}
We first prove the lower bound.  Consider the set $W := \{w \in \mathbb{L}_{d,n_1,\ldots,n_d}: \sum_{j=1}^d w_j= n_1\}$, and define $W^+ := \{w \in \mathbb{L}_{d,n_1,\ldots,n_d}: \sum_{j=1}^d w_j> n_1\}$ and $W^- := \{w \in \mathbb{L}_{d,n_1,\ldots,n_d}: \sum_{j=1}^d w_j< n_1\}$. For each realisation of the Gaussian random vector $\epsilon = (\epsilon_w)_{w \in \mathbb{L}_{d,n_1,\ldots,n_d}}$, we define $\theta(\epsilon) = (\theta_w(\epsilon))_{w\in \mathbb{L}_{d,n_1,\ldots,n_d}} \in \mathcal{M}(\mathbb{L}_{d,n_1,\ldots,n_d})$ by
\[
\theta_w := \begin{cases}
1& \text{if $w \in W^+$}\\
\mathrm{sgn}(\epsilon_w)& \text{if $w \in W$}\\
-1 &  \text{if $w \in W^-$}.\\
\end{cases}
\]
Since $\|\theta(\epsilon)\|_2^2=n$, it follows that 
\begin{align*}
\mathbb{E} \sup_{\theta \in \mathcal{M}(\mathbb{L}_{d,n_1,\ldots,n_d}) \cap B_2(1)} \langle \epsilon, \theta\rangle & \geq \mathbb{E} \biggl\langle \epsilon, \frac{\theta(\epsilon)}{\|\theta(\epsilon)\|_2} \biggr\rangle  = \frac{1}{n^{1/2}} \mathbb{E}\biggl( \sum_{w \in W^+}  \epsilon_w -\sum_{w \in W^-} \epsilon_w+ \sum_{w \in W}  |\epsilon_w|\biggr)\\
&= \frac{\sqrt{2/\pi}}{n^{1/2}}|W|.
\end{align*}
The proof of the lower bound is now completed by noting that 
\begin{equation}
\label{Eq:DiagonalCardinality}
|W|=\binom{d+n_1-1}{d-1}\geq \biggl(\frac{n_1}{d-1}\biggr)^{d-1}.
\end{equation}

We next prove the upper bound.  For $j=1,\ldots,d-2$ and $x_j \in \{1,\ldots, n_j\}$, we define $A_{x_1,\dots,x_{d - 2}} := \{w = (w_1,\ldots,w_d)^\top \in \mathbb{L}_{d,n_1,\ldots,n_d}: (w_1,\dots,w_{d - 2}) = (x_1,\dots,x_{d - 2})\}$. Each $A_{x_1,\dots,x_{d - 2}}$ can be viewed as a directed acyclic graph with graph structure inherited from $\mathbb{L}_{d,n_1,\ldots,n_d}$. Since monotonicity is preserved under the subgraph restriction, we have that $\mathcal{M}(\mathbb{L}_{d,n_1,\ldots,n_d})\subseteq \bigoplus_{x_1,\ldots,x_{d-2}}\mathcal{M}(A_{x_1,\ldots,x_{d-2}})$. Therefore, by the Cauchy--Schwarz inequality, \citet[Proposition~3.1(5, 9, 10)]{Amelunxenetal2014} and \citet[Theorem~2.1]{ChatterjeeGuntuboyinaSen2017}, we obtain that
\begin{align*}
\biggl(\mathbb{E} \sup_{\theta \in \mathcal{M}(\mathbb{L}_{d,n_1,\ldots,n_d})\cap B_2(1)} \langle \epsilon , \theta\rangle \biggr)^2 &\leq \mathbb{E} \biggl\{\biggl(\sup_{\theta \in \mathcal{M}(\mathbb{L}_{d,n_1,\ldots,n_d})\cap B_2(1)} \langle \epsilon , \theta\rangle \biggr)^2\biggr\} \\
&= \delta\bigl(\mathcal{M}(\mathbb{L}_{d,n_1,\ldots,n_d})\bigr) \leq \sum_{x_1,\ldots,x_{d-2}} \delta \bigl( \mathcal{M}(A_{x_1,\ldots,x_{d-2}}) \bigr) \\
&=   \delta\bigl(\mathcal{M}(\mathbb{L}_{2,n_{d-1},n_d})\bigr) \prod_{j=1}^{d-2} n_j \lesssim \frac{n}{n_{d-1}n_d}\log^8 (en_{d-1}n_d),
\end{align*}
as desired.
\end{proof}
\begin{proof}[Proof of Theorem~\ref{Thm:FixedUpper}]
Fix $\theta_0\in\mathcal{M}(\mathbb{L}_{d,n})\cap B_\infty(1)$. By \citet[Theorem~1.1]{Chatterjee2014}, the function
 \[
t\mapsto \mathbb{E}\sup_{\theta\in\mathcal{M}(\mathbb{L}_{d,n}), \|\theta-\theta_0\|\leq t} \langle \epsilon, \theta-\theta_0\rangle - t^2/2  
 \]
 is strictly concave on $[0,\infty)$ with a unique maximum at, say, $t_0 \geq 0$. We note that $t_0\leq t_*$ for any $t_*$ satisfying
 \begin{equation}
  \label{Eq:tstar}
  \mathbb{E} \sup_{\theta\in\mathcal{M}(\mathbb{L}_{d,n}), \|\theta-\theta_0\|\leq t_*} \langle \epsilon, \theta-\theta_0\rangle \leq \frac{t_*^2}{2}.
 \end{equation}
 For a vector $\theta = (\theta_x)_{x \in \mathbb{L}_{d,n}}$, we define $\bar\theta:= n^{-1}\sum_{x\in\mathbb{L}_{d,n}} \theta_x$ and write $\mathbf{1}_n\in \mathbb{R}^{\mathbb{L}_{d,n}}$ for the all-one vector. Then 
 \begin{align*}
  \mathbb{E} \sup_{\theta\in\mathcal{M}(\mathbb{L}_{d,n}), \|\theta-\theta_0\|_2\leq t_*} \langle \epsilon, \theta-\theta_0\rangle & = \mathbb{E} \sup_{\theta\in\mathcal{M}(\mathbb{L}_{d,n}), \|\theta-\theta_0\|_2\leq t_*} \Bigl\{\langle \epsilon, \theta-\bar\theta_0\mathbf{1}_n\rangle + \langle \epsilon, \bar\theta_0\mathbf{1}_n - \theta_0\rangle\Bigr\}\\
  & \leq \mathbb{E} \sup_{\theta\in\mathcal{M}(\mathbb{L}_{d,n}), \|\theta-\bar\theta_0\mathbf{1}_n\|_2\leq t_*+ n^{1/2}} \langle \epsilon, \theta-\bar\theta_0\mathbf{1}_n\rangle\\
  & = \mathbb{E} \sup_{\theta\in\mathcal{M}(\mathbb{L}_{d,n})\cap B_2(t_*+n^{1/2})} \langle \epsilon, \theta\rangle = \bigl\{t_* + n^{1/2}\bigr\} w_{\mathcal{M}(\mathbb{L}_{d,n}) \cap B_2(1)},
 \end{align*}
 where we recall that $w_{\mathcal{M}(\mathbb{L}_{d,n}) \cap B_2(1)} = \mathbb{E}\sup_{\theta\in\mathcal{M}(\mathbb{L}_{d,n})\cap B_2(1)} \langle \epsilon,\theta\rangle$. Therefore, to satisfy~\eqref{Eq:tstar}, it suffices to choose
 \begin{align}
\label{Eq:t*}
  t_* &= w_{\mathcal{M}(\mathbb{L}_{d,n})\cap B_2(1)} + \bigl\{w_{\mathcal{M}(\mathbb{L}_{d,n})\cap B_2(1)}^2 + 2n^{1/2}w_{\mathcal{M}(\mathbb{L}_{d,n})\cap B_2(1)}\bigr\}^{1/2} \nonumber \\
&\lesssim \max\bigl\{w_{\mathcal{M}(\mathbb{L}_{d,n})\cap B_2(1)}, n^{1/4}w_{\mathcal{M}(\mathbb{L}_{d,n})\cap B_2(1)}^{1/2}\bigr\}.
 \end{align}
 Consequently, by \citet[Corollary~1.2]{Chatterjee2014} and Proposition~\ref{Prop:GaussianWidthLattice}, we have that 
 \begin{align*}
  R(\hat\theta_n, \theta_0)  \lesssim n^{-1}\max(1, t_0^2) \lesssim n^{-1}t_*^2 \lesssim n^{-1/d}\log^4 n,
 \end{align*}
 which completes the proof.
\end{proof}

The following proposition is the main ingredient of the proof of the minimax lower bound in Proposition~\ref{Prop:FixedLower}.  It exhibits a combinatorial obstacle, namely the existence of a large antichain, that prevents any estimator from achieving a faster rate of convergence. We state the result in the more general and natural setting of least squares isotonic regression on directed acyclic graphs. Recall that the isotonic regression problem on a directed acyclic graph $G = (V(G),E(G))$ is of the form $Y_v = \theta_v + \epsilon_v$, where $\theta = (\theta_v)_{v\in V(G)} \in \mathcal{M}(G)$ and $\epsilon = (\epsilon_v)_{v\in V(G)}$ is a vector of independent $N(0,1)$ random variables. 

\begin{prop}
\label{Prop:AntichainLowerBound}
If $G = (V(G),E(G))$ is a directed acyclic graph and $W \subseteq V(G)$ is a maximum antichain of $G$, then
 \[
  \inf_{\tilde{\theta}_n} \sup_{\theta_0\in\mathcal{M}(G)\cap B_\infty(1)} R(\tilde\theta_n, \theta_0) \geq \frac{8|W|}{27n},
 \]
where the infimum is taken over all measurable functions $\tilde\theta_n$ of $\{Y_v:v\in V(G)\}$.
\end{prop}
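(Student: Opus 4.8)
The plan is to exhibit, inside $\mathcal{M}(G)\cap B_\infty(1)$, a rich finite family of parameters that agree off a maximum antichain and are \emph{arbitrary} on it, thereby embedding an unconstrained $|W|$-dimensional Gaussian location problem whose minimax risk is of order $|W|/n$. Let $W$ be a maximum antichain of $G$ and set $V^- := \{v\in V(G)\setminus W:\ v< w \text{ for some } w\in W\}$ and $V^+ := \{v\in V(G)\setminus W:\ v> w \text{ for some } w\in W\}$, where $v<w$ means $v\le w$ and $v\ne w$. Since $W$ is \emph{maximum}, every $v\notin W$ is comparable to some element of $W$, so $V(G)=V^-\sqcup W\sqcup V^+$; and since $W$ is an \emph{antichain}, one checks that $V^-$ and $V^+$ are disjoint and that the only order relations between the three parts point `upward', i.e.\ there is no $u\le v$ with $v\in V^-$ and $u\notin V^-$, nor with $u\in V^+$ and $v\in W$. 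Fix $\delta\in(0,1]$, to be chosen. For $\tau\in\{-1,1\}^W$ define $\theta^{(\tau)}\in\mathbb{R}^{V(G)}$ by $\theta^{(\tau)}_v:=-\delta$ on $V^-$, $\theta^{(\tau)}_v:=\delta$ on $V^+$, and $\theta^{(\tau)}_w:=\delta\tau_w$ on $W$. The structural facts above show that $\theta^{(\tau)}$ is isotonic on $G$, and plainly $\|\theta^{(\tau)}\|_\infty=\delta\le1$, so $\theta^{(\tau)}\in\mathcal{M}(G)\cap B_\infty(1)$.

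Next I would pass to an average over this hypercube of hypotheses. Under $\theta^{(\tau)}$ the data $(Y_v)_{v\in V(G)}$ are independent with $Y_v\sim N(\theta^{(\tau)}_v,1)$, and because $\theta^{(\tau)}_v$ for $v\ne w$ does not involve $\tau_w$, the joint law depends on $\tau_w$ only through $Y_w$. Hence for any estimator $\tilde\theta_n$ with coordinates $\tilde\theta_{n,w}$,
\[
 \sup_{\theta_0\in\mathcal{M}(G)\cap B_\infty(1)} R(\tilde\theta_n,\theta_0)\ \ge\ \frac{1}{2^{|W|}}\sum_{\tau\in\{-1,1\}^W} R(\tilde\theta_n,\theta^{(\tau)})\ \ge\ \frac{1}{n}\sum_{w\in W}\frac{1}{2^{|W|}}\sum_{\tau} \mathbb{E}_{\theta^{(\tau)}}\bigl(\tilde\theta_{n,w}-\delta\tau_w\bigr)^2 .
\]
For each fixed $w$, I would average first over $(\tau_v)_{v\ne w}$ and then over $\tau_w\in\{-1,1\}$; since, for any fixed $(\tau_v)_{v\ne w}$, the vector $(Y_v)_{v\ne w}$ has a law not depending on $\tau_w$ and is independent of $Y_w$, it is ancillary, so the inner average is bounded below by the Bayes risk $r_1(\delta)$ of the scalar two-point problem of estimating $\mu\in\{-\delta,\delta\}$ from $Y\sim N(\mu,1)$ under the uniform prior (a form of Assouad's lemma in the present setting). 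Consequently the right-hand side is at least $|W|\,r_1(\delta)/n$, and since $\tilde\theta_n$ was arbitrary this lower-bounds the minimax risk.

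It then remains to bound $r_1(\delta)$ from below and optimise over $\delta$. The posterior mean of $\mu$ given $Y=y$ is $\delta\tanh(\delta y)$, so the posterior variance is $\delta^2\bigl(1-\tanh^2(\delta y)\bigr)$ and, by symmetry of the two-point mixture, $r_1(\delta)=\delta^2\,\mathbb{E}_{Y\sim N(\delta,1)}\bigl[1-\tanh^2(\delta Y)\bigr]$; bounding $\tanh^2$ by a suitable elementary inequality and maximising over $\delta\in(0,1]$ gives $\sup_{\delta\in(0,1]} r_1(\delta)\ge 8/27$, hence the claimed lower bound $8|W|/(27n)$. The step that I expect to need the most care is the combinatorial one in the first paragraph — verifying that a \emph{maximum} antichain separates the DAG into a `lower' region $V^-$ and an `upper' region $V^+$ with no order relation crossing them the wrong way, which is exactly what makes the hypotheses $\theta^{(\tau)}$ genuinely isotonic — together with choosing the scalar inequality sharply enough to extract the stated constant $8/27$.
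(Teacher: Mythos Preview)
Your approach is essentially the paper's: both establish the decomposition $V(G)=V^-\sqcup W\sqcup V^+$ from maximality of the antichain, build the hypercube $\{\theta^{(\tau)}:\tau\in\{\pm1\}^W\}$ of isotonic vectors that are free on $W$, and reduce the lower bound to a per-coordinate two-point Gaussian problem (Assouad). The only difference is in the last step: the paper extracts the constant $8/27$ by invoking Assouad's lemma together with Pinsker's inequality as a black box, whereas you compute the exact scalar Bayes risk $r_1(\delta)=\delta^2\,\mathbb{E}_{Y\sim N(\delta,1)}\bigl[1-\tanh^2(\delta Y)\bigr]$ and then optimise over $\delta$.

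The one soft spot you already flag is real: the claim $\sup_{\delta\le1}r_1(\delta)\ge 8/27$ is correct numerically, but the most obvious ``elementary inequality'' $\tanh^2(x)\le x^2$ gives only $r_1(\delta)\ge\delta^2\bigl(1-\delta^2(1+\delta^2)\bigr)$, which maxes at $5/27$, and the Le~Cam bound $r_1(\delta)\ge\delta^2(1-d_{\mathrm{TV}})$ with Pinsker gives $\delta^2(1-\delta)$, maxing at $4/27$. To reach $8/27$ you will need something sharper --- for instance the exact affinity $1-d_{\mathrm{TV}}(N(\delta,1),N(-\delta,1))=2(1-\Phi(\delta))$ together with a numerical lower bound on $1-\Phi(1)$, or else fall back on the same black-box Assouad statement the paper uses. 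You should make this step explicit rather than leaving it as ``a suitable elementary inequality''.
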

\begin{proof}
If $v\notin W$, then by the maximality of $W$, there exists $u_0\in W$ such that either $u_0\leq v$ or $u_0\geq v$. Suppose without loss of generality it is the former. Then $v\not\leq u$ for any $u\in W$, because otherwise we would have $u_0\leq u$, contradicting the fact that $W$ is an antichain. It follows that we can write $V(G) = W^+ \sqcup W \sqcup W^-$, where for all $v\in W^+$, $u\in W$, we have $u\not\geq v$, and similarly for all $v\in W^-$, $u\in W$, we have $v\not\geq u$.
 
 For $\tau = (\tau_w) \in \{0,1\}^W =: T$, we define $\theta^\tau = (\theta^\tau_v) \in \mathcal{M}(G) \cap B_\infty(1)$ by
 \[
   \theta^\tau_v = \begin{cases}
                     -1 & \text{ if $v\in W^-$}\\
                     \rho(2\tau_v-1) & \text{ if $v\in W$}\\
                     1 & \text{ if $v\in W^+$},
                   \end{cases}
 \]
 where $\rho\in(0,1)$ is a constant to be chosen later. Let $P_\tau$ denote the distribution of $\{Y_v:v \in V(G)\}$ when the isotonic signal is $\theta^\tau$. Then, for $\tau, \tau' \in T$, by Pinsker's inequality \citep[e.g.][p.~62]{Pollard2002}, we have  
 \[
   d_{\mathrm{TV}}^2(P_\tau, P_{\tau'}) \leq \frac{1}{2} d_{\mathrm{KL}}^2(P_\tau,P_{\tau'}) = \frac{n}{4} \|\theta^\tau - \theta^{\tau'}\|_2^2 = n\rho^2\|\tau-\tau'\|_0.
 \]
Consequently, setting $\rho = 2/(3n^{1/2})$, by Assouad's Lemma \citep[cf.][Lemma~2]{Yu1997}, we have that
 \[
   \inf_{\tilde\theta_n}\sup_{\theta_0\in\mathcal{M}(G)\cap B_\infty(1)} R(\tilde\theta_n, \theta_0) \geq \inf_{\tilde\theta_n}\sup_{\tau\in T} \frac{1}{n}\mathbb{E}\|\tilde\theta_n - \theta^\tau\|_2^2 \geq 2\rho^2|W|( 1 - n^{1/2}\rho) = \frac{8|W|}{27n},
 \]
 as desired. 
\end{proof}

\begin{proof}[Proof of Proposition~\ref{Prop:FixedLower}]
  Recall that $n_1 = n^{1/d}$. We note that the set
  \[
   W := \biggl\{v = (v_1,\ldots,v_d)^\top \in \mathbb{L}_{d,n} : \sum_{j=1}^d v_j = n_1\biggr\}
  \]
  is an antichain in $\mathbb{L}_{d,n}$ of cardinality $\binom{d+n_1-1}{d-1}\geq \frac{n^{1-1/d}}{(d-1)^{d-1}}$. Hence any maximum antichain of $\mathbb{L}_{d,n}$ is at least of this cardinality. The desired result therefore follows from Proposition~\ref{Prop:AntichainLowerBound}.
\end{proof}

\begin{proof}[Proof of Corollary~\ref{Cor:Fixed}]
  For $Q = P_n$, the result is an immediate consequence of Theorem~\ref{Thm:FixedUpper} and Proposition~\ref{Prop:FixedLower}, together with the facts that
 \[
   \inf_{\tilde\theta_n}\sup_{\theta_0\in\mathcal{M}(\mathbb{L}_{d,n})\cap B_\infty(1)} R(\tilde\theta_n,\theta_0) = \inf_{\tilde f_n}\sup_{f_0\in\mathcal{F}_d\cap B_\infty(1)} \mathbb{E}\|\tilde f_n - f_0\|_{L_2(P_n)}^2 
 \]
 and
 \[
  \sup_{\theta_0\in\mathcal{M}(\mathbb{L}_{d,n})\cap B_\infty(1)} R(\hat\theta_n,\theta_0) =  \sup_{f_0\in\mathcal{F}_d\cap B_\infty(1)} \mathbb{E} \|\hat f_n - f_0\|_{L_2(P_n)}^2.
 \]
Now suppose that $Q$ is Lebesgue measure on $[0,1]^d$.  For any $f: [0,1]^d \to \mathbb{R}$, we may define $\theta(f) : \mathbb{L}_{d,n}\to \mathbb{R}$ by $\theta(f)(x) := f(n_1^{-1}x)$. On the other hand, for any $\theta : \mathbb{L}_{d,n}\to \mathbb{R}$, we can also define $f(\theta): [0,1]^d \to \mathbb{R}$ by 
 \[
  f(\theta)(x_1,\ldots,x_d) := \theta(\lfloor n_1 x_1\rfloor, \ldots, \lfloor n_1 x_d\rfloor).
 \]
We first prove the upper bound by observing from Lemma~\ref{Lemma:RiemannApproximation} and Theorem~\ref{Thm:FixedUpper} that 
 \begin{align*}
  \sup_{f_0\in\mathcal{F}_d\cap B_\infty(1)}\mathbb{E} \|\hat f_n -f_0\|_{L_2(Q)}^2 &\leq 2 \sup_{f_0\in\mathcal{F}_d\cap B_\infty(1)}\bigl\{n^{-1}\mathbb{E} \|\theta(\hat f_n) - \theta(f_0)\|_2^2 + \|f_0 - f(\theta(f_0))\|_{L_2(Q)}^2\bigr\} \\
  &\leq 2\sup_{\theta_0\in\mathcal{M}(\mathbb{L}_{d,n})\cap B_\infty(1)}\frac{1}{n}\mathbb{E}\|\hat\theta_n - \theta_0\|_2^2 + 8dn^{-1/d} \leq C_d n^{-1/d}\log^4 n,
 \end{align*}
 as desired.  Then by convexity of $\mathcal{H}$ and Proposition~\ref{Prop:FixedLower}, we have
 \begin{align*}
  \inf_{\tilde f_n}\sup_{f_0\in\mathcal{F}_d\cap B_\infty(1)} \mathbb{E}\|\tilde f_n - f_0\|_{L_2(Q)}^2 &\geq \inf_{\tilde f_n} \sup_{\theta_0\in\mathcal{M}(\mathbb{L}_{d,n}) \cap B_\infty(1)} \mathbb{E}\|\tilde f_n - f(\theta_0)\|_{L_2(Q)}^2 \\
  &= \inf_{\tilde f_n} \sup_{\theta_0\in\mathcal{M}(\mathbb{L}_{d,n}) \cap B_\infty(1)} \mathbb{E}\|f(\theta(\tilde f_n)) - f(\theta_0)\|_{L_2(Q)}^2 \\
  & = \inf_{\tilde \theta_n} \sup_{\theta_0\in\mathcal{M}(\mathbb{L}_{d,n}) \cap B_\infty(1)} \frac{1}{n}\mathbb{E}\|\tilde\theta_n - \theta_0\|_2^2\geq c_d n^{-1/d},
 \end{align*}
which completes the proof.
\end{proof}

\begin{proof}[Proof of Theorem~\ref{Thm:FixedOracle}]
Recall that the tangent cone at a point $x$ in a closed, convex set $K$ is defined as $T(x, K) := \{t(y-x): y\in K, t\geq 0\}$.  By \citet[Proposition~2.1]{Bellec2017} (see also \citet[Lemma~4.1]{ChatterjeeGuntuboyinaSen2017}), we have
 \begin{equation}
  R(\hat\theta_n, \theta_0) \leq \frac{1}{n}\inf_{\theta\in\mathcal{M}(\mathbb{L}_{d,n})}\Bigl\{\|\theta - \theta_0\|_2^2 + \delta\bigl(T(\theta, \mathcal{M}(\mathbb{L}_{d,n}))\bigr)\Bigr\}.\label{Eq:SharpOracle}
 \end{equation}
 For a fixed $\theta \in \mathcal{M}(\mathbb{L}_{d,n})$ such that $K(\theta) = K$, let $\mathbb{L}_{d,n} = \sqcup_{\ell=1}^K A_\ell$ be the partition of $\mathbb{L}_{d,n}$ into two-dimensional sheets $A_\ell$ such that $\theta$ is constant on each $A_\ell$. Define $m_\ell := |A_\ell|$. Then any $u \in T(\theta, \mathcal{M}(\mathbb{L}_{d,n}))$ must be isotonic when restricted to each of the two-dimensional sheets; in other words
 \[
  T(\theta, \mathcal{M}(\mathbb{L}_{d,n})) \subseteq \bigoplus_{\ell=1}^K T(0, \mathcal{M}(A_\ell)).
 \]
 By \citet[Proposition~3.1(9, 10)]{Amelunxenetal2014}, we have
\begin{equation}
\label{Eq:StatDimDecomp}
 \delta\bigl(T(\theta, \mathcal{M}(\mathbb{L}_{d,n}))\bigr) \leq \delta\biggl(\bigoplus_{\ell=1}^K T(0, \mathcal{M}(A_\ell))\biggr) = \sum_{\ell=1}^K \delta\bigl(T(0, \mathcal{M}(A_\ell))\bigr) = \sum_{\ell=1}^K \delta\bigl(\mathcal{M}(A_\ell)\bigr).
\end{equation}
By a consequence of the Gaussian Poincar\'{e} inequality \citep[cf.~][p.\ 73]{BoucheronLugosiMassart2013} and Proposition~\ref{Prop:GaussianWidthLattice}, we have
\begin{equation}
  \label{Eq:tmp7}
 \delta\bigl(\mathcal{M}(A_\ell)\bigr) \leq \biggl(\mathbb{E}\sup_{\theta\in\mathcal{M}(A_\ell)\cap B_2(1)} \langle \epsilon_{A_\ell}, \theta\rangle \biggr)^2 + 1 \lesssim \log_+^8 m_\ell.
\end{equation}
Thus, by~\eqref{Eq:StatDimDecomp}, \eqref{Eq:tmp7} and Lemma~\ref{Lemma:Jensen+} applied to $x\mapsto \log_+^8 x$, we have
\[
  \delta\bigl(T(\theta, \mathcal{M}(\mathbb{L}_{d,n}))\bigr) \lesssim \sum_{\ell=1}^K \log_+^8 m_\ell \lesssim K\log_+^8\biggl(\frac{n}{K}\biggr),
\]
which together with~\eqref{Eq:SharpOracle} proves the desired result.
\end{proof}

\begin{proof}[Proof of Theorem~\ref{Thm:FixedOracleBlock}]
For a fixed $\theta \in \mathcal{M}^{(k)}(\mathbb{L}_{d,n})$, let $\mathbb{L}_{d,n} = \sqcup_{\ell=1}^k R_\ell$ be the partition of $\mathbb{L}_{d,n}$ into hyperrectangles such that $\theta$ is constant on each hyperrectangle $R_\ell$. Suppose $R_\ell$ has side lengths $m_1,\ldots,m_d$ (so $|R_\ell| = \prod_{j=1}^d m_j$), so it can be partitioned into $\frac{|R_\ell|}{m_{j}m_{j'}}$ parallel two-dimensional sheets.  By choosing $m_j$ and $m_{j'}$ to be the largest two elements of the multiset $\{m_1,\ldots,m_d\}$ and using Jensen's inequality (noting that $x\mapsto x^{1-2/d}$ is concave when $d\geq 2$), we obtain
\begin{equation}
\label{Eq:tmp10}
  K(\theta) \leq \sum_{\ell=1}^k |R_\ell|^{1-2/d} \leq k \biggl(\frac{n}{k}\biggr)^{1-2/d}.
\end{equation}
This, combined with the oracle inequality in Theorem~\ref{Thm:FixedOracle}, gives the desired result.
\end{proof}

\begin{proof}[Proof of Proposition~\ref{Prop:FixedAdaptLower}]
Since the convex cone $\mathcal{M}(\mathbb{L}_{d,n})$ is invariant under translation by any $\theta_0\in\mathcal{M}^{(1)}(\mathbb{L}_{d,n})$, we may assume without loss of generality that $\theta_0 = 0$. By \citet[Corollary~1.2]{Chatterjee2014}, we have 
 \begin{equation}
 \label{Eq:ChatterjeeLowerBound}
  R(\hat\theta_n, 0) \geq \frac{1}{n} (t_0^2 - Ct_0^{3/2}),
 \end{equation}
 where 
 \begin{align*}
  t_0 :&= \argmax_{t \geq 0}\biggl\{\mathbb{E} \sup_{\theta\in\mathcal{M}(\mathbb{L}_{d,n}) \cap B_2(t)} \langle \epsilon, \theta\rangle - t^2/2\biggr\} \\
  &= \argmax_{t \geq 0} \biggl\{t\cdot \mathbb{E} \sup_{\theta\in\mathcal{M}(\mathbb{L}_{d,n})\cap B_2(1)} \langle \epsilon, \theta\rangle - t^2/2\biggr\} = \mathbb{E} \sup_{\theta\in\mathcal{M}(\mathbb{L}_{d,n})\cap B_2(1)}\langle \epsilon, \theta\rangle.
 \end{align*}
 By Proposition~\ref{Prop:GaussianWidthLattice}, we have 
 \[
  t_0 = \mathbb{E} \sup_{\theta\in\mathcal{M}(\mathbb{L}_{d,n})\cap B_2(1)}\langle \epsilon, \theta\rangle\geq c_d n^{1/2-1/d},
 \]
 which together with~\eqref{Eq:ChatterjeeLowerBound} proves the desired lower bound for cases $d\geq 3$. 

For the $d=2$ case, by Sudakov minorisation for Gaussian processes \citep[e.g.][Theorem~5.6 and the remark following it]{Pisier1999} and Lemma~\ref{Lemma:MetricEntropyLowerBound}, there exists a universal constant $\varepsilon_0 > 0$ such that
 \[
   \mathbb{E} \sup_{\theta\in\mathcal{M}(\mathbb{L}_{2,n})\cap B_2(1)} \langle \epsilon, \theta\rangle \gtrsim \varepsilon_0\log^{1/2} N\bigl(\varepsilon_0, \mathcal{M}(\mathbb{L}_{2,n}) \cap B_2(1),\|\cdot\|_2\bigr) \gtrsim \log n.
 \]
This, together with~\eqref{Eq:ChatterjeeLowerBound}, establishes the desired conclusion when $d = 2$.
\end{proof}

\begin{proof}[Proof of Corollary~\ref{Cor:d-2}]
  Without loss of generality, we may assume that $\theta_0\in\mathcal{M}_r(\mathbb{L}_{d,n})$ is a function of the final $r$ variables. For $x_3,\ldots,x_d \in \{1,\ldots, n_1\}$, we define the two-dimensional sheet $A_{x_3,\ldots,x_d} := \bigl\{(x_1,\ldots,x_d): x_1,x_2 \in\{1,\ldots,n_1\}\bigr\}$. When $r \leq d-2$, we have that $\theta_0$ is constant on each $A_{x_3,\ldots,x_d}$. Hence, by Theorem~\ref{Thm:FixedOracle}, 
  \[
    R(\hat\theta_n, \theta_0) \lesssim \frac{K(\theta_0)\log_+^8\bigl(n/K(\theta_0)\bigr)}{n} \lesssim n^{-2/d}\log^8 n. 
  \]
  Now suppose that $\theta_0\in\mathcal{M}_{d-1}(\mathbb{L}_{d,n})$. Let $m$ be a positive integer to be chosen later. Then $A_{x_3,\ldots,x_d} = \sqcup_{\ell = -m}^m A^{(\ell)}_{x_3,\ldots,x_d}$, where 
  \[
   A^{(\ell)}_{x_3,\ldots,x_d} := A_{x_3,\ldots,x_d}\cap \biggl\{v\in\mathbb{L}_{d,n}: \frac{\ell-1}{m} < (\theta_0)_v \leq \frac{\ell}{m}\biggr\}.
  \]
  Let $\theta^{(m)} \in \mathcal{M}(\mathbb{L}_{d,n})$ be the vector that takes the constant value $\ell/m$ on $A^{(\ell)}_{x_3,\ldots,x_d}$ for each $\ell = -m,\ldots,m$.  Then setting $m \asymp n^{2/(3d)}\log^{-8/3}n$, we have by Theorem~\ref{Thm:FixedOracle}  that
  \[
   R(\hat\theta_n, \theta_0) \lesssim \frac{\|\theta^{(m)}-\theta_0\|_2^2}{n} + \frac{K(\theta^{(m)})\log_+^8\bigl(n/K(\theta^{(m)})\bigr)}{n} \leq \frac{1}{m^2} + \frac{m}{n^{2/d}} \log^8 n\lesssim n^{-4/(3d)} \log^{16/3} n.
  \]
  as desired. 
  
  Finally, the $r=d$ case is covered in Theorem~\ref{Thm:FixedUpper}.
\end{proof}

\section{Proof of results in Section~\ref{Sec:RandomDesign}} 
\label{Sec:ProofRandom}
Henceforth we write $\mathbb{E}^X$ for the expectation conditional on $X_1,\ldots,X_n$, and also write $\mathbb{G}_n := n^{1/2}(\mathbb{P}_n-P)$.  The key ingredient in the proofs of both Theorems~\ref{Thm:RandomUpper} and~\ref{Thm:RandomOracle} is the following proposition, which controls the risk of the least squares estimator when $f_0 = 0$.  Recall that $\gamma_2 = 9/2$ and $\gamma_d = (d^2+d+1)/2$ for $d \geq 3$.
\begin{prop}
\label{Prop:RandomAdaptUpper}
  Let $d \geq 2$.  There exists a constant $C_{d,m_0,M_0}>0$, depending only on $d, m_0$ and $M_0$, such that
  \[
   R(\hat{f}_n,0) \leq C_{d,m_0,M_0} n^{-2/d} \log^{2\gamma_d} n.                                           
  \]
\end{prop}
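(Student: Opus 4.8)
The plan is to reduce the statement to a bound on the expected Gaussian width of the localised monotone cone of the design graph, and then to establish that bound by a multiscale argument tailored to avoid the divergent entropy integral. Since $f_0 = 0$, the observation vector is $(\epsilon_i)_{i=1}^n$, and conditionally on $X_1,\ldots,X_n$ the estimator $\hat\theta_n = (\hat f_n(X_i))_{i=1}^n$ is the Euclidean projection of $\epsilon$ onto the polyhedral cone $\mathcal{M}(G_X)$; hence $R(\hat f_n, 0) = \frac1n\mathbb{E}\|\hat\theta_n\|_2^2 = \frac1n\mathbb{E}\bigl[\delta(\mathcal{M}(G_X))\bigr]$. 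By the consequence of the Gaussian Poincar\'e inequality already used at \eqref{Eq:tmp7} (see also \citet[p.~73]{BoucheronLugosiMassart2013}), $\delta(\mathcal{M}(G_X)) \le w_{G_X}^2 + 1$ with $w_{G_X} := \mathbb{E}^X\sup_{\theta\in\mathcal{M}(G_X)\cap B_2(1)}\langle\epsilon,\theta\rangle$, so it suffices to show $\mathbb{E}[w_{G_X}^2] \lesssim_{d,m_0,M_0} n^{1-2/d}\log^{2\gamma_d} n$. Since $w_{G_X} \le (\mathbb{E}\|\epsilon\|_2^2)^{1/2} = n^{1/2}$ deterministically, it is in fact enough to produce an event $\Omega_0$, measurable with respect to $X_1,\ldots,X_n$, with $\mathbb{P}(\Omega_0^c)$ polynomially small (say $\le n^{-d}$), on which $w_{G_X} \lesssim_{d,m_0,M_0} n^{1/2-1/d}\log^{\gamma_d} n$.

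I would take $\Omega_0$ to be a design-regularity event. Concretely, for a dyadic family of cubic partitions of $[0,1]^d$ (ranging from one cell up to $\asymp n/\log^{c}n$ cells, so that on the finest scale each cell contains $\asymp\log^{c}n$ points), $\Omega_0$ asks that the number of design points in every cell of every partition be within a constant factor of its expectation, and that the largest antichain in $G_X$ have size $O(n^{1-1/d}\log n)$. The first requirement fails with superpolynomially small probability by Bernstein's inequality and a union bound over cells (using $0<m_0\le p_0\le M_0<\infty$); the second by a union bound over the combinatorially controlled family of ``staircase'' up-sets, as in the combinatorics of random $d$-dimensional posets alluded to before Proposition~\ref{Prop:RandomLower}. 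On $\Omega_0^c$ one just uses $w_{G_X}^2\le n$.

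The crux is the deterministic estimate $w_{G_X} \lesssim n^{1/2-1/d}\log^{\gamma_d} n$ for $X\in\Omega_0$, which I would prove by induction on $d$. For $d=2$ one compares the random design with a fixed square lattice of side $\asymp n^{1/2}$: on $\Omega_0$ the empirical measure is comparable to the uniform measure on that lattice, and $\mathcal{M}(G_X)\cap B_2(1)$ can be sandwiched, up to a controlled error, between dilates of $\mathcal{M}(\mathbb{L}_{2,n_1,n_2})\cap B_2(1)$, so the $\log^8$-bound on the statistical dimension of the two-dimensional monotone cone from \citet[Theorem~2.1]{ChatterjeeGuntuboyinaSen2017} transfers (up to extra $\log$-factors). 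For $d\ge 3$, given $\theta\in\mathcal{M}(G_X)\cap B_2(1)$ I would use the telescoping decomposition $\theta = \theta^{(0)} + \sum_{j\ge1}(\theta^{(j)}-\theta^{(j-1)})$, where $\theta^{(j)}$ averages $\theta$ over the cells of the $j$-th partition; the increments are mutually orthogonal, so $\sum_j\|\theta^{(j)}-\theta^{(j-1)}\|_2^2\le\|\theta\|_2^2\le1$, the coarsest term is $O(1)$, and each increment lies in a direct sum, over cells $c$ of the coarser partition, of small cones of sub-cell-constant vectors that are monotone in the induced order and have empirical mean zero on $c$. Instead of bounding each small cone by its ambient dimension (which is far too lossy), within each cell $c$ I would split the increment into a ``diffuse'' component, whose Gaussian complexity is controlled by Proposition~\ref{Prop:GaussianWidthLattice} applied in dimension $d$ together with the $d=2$ case already treated, and a ``sparse'' component supported on few sub-cells, whose contribution to $\langle\epsilon,\cdot\rangle$ is controlled by a union bound over the combinatorially few relevant up-sets (using the antichain bound afforded by $\Omega_0$). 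Recombining over $j$ and over $c$ by Hölder's inequality against the energy budget $\sum_j\|\theta^{(j)}-\theta^{(j-1)}\|_2^2\le1$ delivers the bound, with $\gamma_d$ growing like $d^2$ because the inductive passage from dimension $d-1$ to $d$ costs a $\log$-power of order $d$.

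The step I expect to be the main obstacle is precisely this last one. For $d\ge3$ the entropy integral of $\mathcal{F}_d\cap B_\infty(1)$ diverges at a polynomial rate by \eqref{Eq:GaoWellner}, so a direct Dudley (or even generic) chaining bound on $w_{G_X}$ cannot reach the minimax rate; and, unlike the fixed cubic lattice of Section~\ref{Sec:FixedDesign}, a random cubic design admits no decomposition into exactly two-dimensional sheets on which the $d$-dimensional monotone cone restricts to a two-dimensional one, so the clean argument behind Proposition~\ref{Prop:GaussianWidthLattice} is not available. Making the split between the convex-geometric control of the diffuse part and the combinatorial control of the sparse part of the multiscale increments quantitatively precise --- and checking that the energy budget prevents a blow-up across the $\asymp\log n$ scales --- is where the real work lies.
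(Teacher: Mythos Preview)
Your opening reduction is fine and coincides with the paper: indeed $R(\hat f_n,0)=n^{-1}\mathbb{E}\,\delta(\mathcal{M}(G_X))$, and it suffices to control this expected statistical dimension. But from that point on, the paper proceeds along a completely different line, and the route you sketch has real gaps.

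The paper never attempts to bound $w_{G_X}$ directly on a good design event. Instead it works entirely in \emph{function space}: it truncates at $\|\hat f_n\|_\infty\le 4\log^{1/2}n$ (Lemma~\ref{Lemma:Linfty}), uses a peeling argument in the $L_2(P)$ norm to reduce to two localised empirical processes over $\mathcal{F}_d\cap B_\infty(4\log^{1/2}n)\cap B_2(r,P)$ (Proposition~\ref{Prop:Reduction}), and then bounds the symmetrised process by slicing $[0,1]^d$ into $n^{1/d}$ parallel one-dimensional strips $[0,1]^{d-1}\times[\tfrac{\ell-1}{n_1},\tfrac{\ell}{n_1}]$ (Proposition~\ref{Prop:SymmetrisedProcess}). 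The point of the strips is not a sheet decomposition of the cone at all; it is that the $L_2(P)$-constraint forces the envelope of the localised class to decay like $\ell^{-1/2}$ on the $\ell$th strip (Lemma~\ref{Lemma:EnvelopeIntegral}), so a bracketing-entropy chaining on each strip, truncated at the right scale, stays finite despite the divergent global entropy integral. A final ratio-type argument (Proposition~\ref{Prop:RatioEmpiricalProcess}) converts the $L_2(P)$ bound back to the empirical $L_2(\mathbb{P}_n)$ risk.

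Your $d=2$ base case is the step I do not see how to rescue. The assertion that on a design-regularity event $\mathcal{M}(G_X)\cap B_2(1)$ can be ``sandwiched between dilates of $\mathcal{M}(\mathbb{L}_{2,n_1,n_2})\cap B_2(1)$'' is not substantiated, and I do not believe it is true in any form strong enough to transfer the lattice statistical-dimension bound. Even if cell counts are uniformly controlled, the partial order among the random points inside and across cells is not a refinement or a coarsening of the lattice order, so there is no cone inclusion in either direction; and a dilation cannot repair this because it is the \emph{shape} of the cone, not its scale, that differs. Your inductive step for $d\ge 3$ inherits this problem (it ultimately relies on the $d=2$ case), and the proposed diffuse/sparse split is, as you yourself say, not made precise; in particular, invoking Proposition~\ref{Prop:GaussianWidthLattice} for the diffuse part presupposes exactly the kind of lattice comparison that is unavailable here.

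In short: the reduction to $\mathbb{E}\,\delta(\mathcal{M}(G_X))$ is right, but you should abandon the cone-geometric multiscale plan and instead localise in function space. The two ideas that actually do the work are (i) the strip decomposition with the envelope decay of Lemma~\ref{Lemma:EnvelopeIntegral}, which lets you chain without seeing the divergent part of the entropy, and (ii) the $L_\infty$ truncation plus $L_2(P)$ peeling of Proposition~\ref{Prop:Reduction}, which is what makes the localisation legitimate.
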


The proof of Proposition~\ref{Prop:RandomAdaptUpper} requires several reduction techniques, which we detail in Section~\ref{Sec:ProofRandomAdaptUpper} below.  We first derive Theorems~\ref{Thm:RandomUpper} and~\ref{Thm:RandomOracle} from Proposition~\ref{Prop:RandomAdaptUpper}.

\begin{proof}[Proof of Theorem~\ref{Thm:RandomUpper}]
  Since the argument used in the proof of Theorem~\ref{Thm:FixedUpper}, up to~\eqref{Eq:t*}, does not depend on the design, we deduce from~\citet[][Corollary~1.2]{Chatterjee2014}, \citet[Proposition~3.1(5)]{Amelunxenetal2014} and the Cauchy--Schwarz inequality that
  \begin{equation}
  \label{Eq:tmp6}
    R(\hat{f}_n,f_0) \lesssim \frac{1}{n}\mathbb{E}\max\bigl\{1,\delta(\mathcal{M}(G_X)), n^{1/2}\delta(\mathcal{M}(G_X))^{1/2}\bigr\}.
  \end{equation}
  On the other hand, by Proposition~\ref{Prop:RandomAdaptUpper}, we have
  \begin{equation}
    \label{Eq:StatDim}
    \mathbb{E} \,\delta\bigl(\mathcal{M}(G_X)\bigr) \lesssim_{d,m_0,M_0} n^{1-2/d}\log^{2\gamma_d} n.
  \end{equation}
  We obtain the desired result by combining~\eqref{Eq:tmp6} and~\eqref{Eq:StatDim}.
\end{proof}

\begin{proof}[Proof of Theorem~\ref{Thm:RandomOracle}]
For any $f\in\mathcal{F}_d$, write $\theta_{f,X} := (f(X_1),\ldots,f(X_n))^\top \in \mathbb{R}^n$. By \citet[Proposition~2.1]{Bellec2017}, we have
\begin{align}
\label{Eq:tmp4}
 R(\hat{f}_n,f_0) &\leq \frac{1}{n} \mathbb{E}\,\biggl[\inf_{f\in\mathcal{F}_d} \Bigl\{\|\theta_{f,X} -\theta_{f_0,X}\|_2^2 + \delta\bigl(T(\theta_{f,X},\mathcal{M}(G_X))\bigr)\Bigr\}\biggr]\nonumber\\
& \leq \frac{1}{n} \inf_{k\in\mathbb{N}} \inf_{f\in\mathcal{F}_d^{(k)}}\Bigl\{ \mathbb{E} \|\theta_{f,X} -\theta_{f_0,X}\|_2^2 + \mathbb{E}\,\delta\bigl(T(\theta_{f,X},\mathcal{M}(G_X))\bigr)\Bigr\}.
\end{align}
Now, for a fixed $f\in\mathcal{F}_d^{(k)}$, let $R_1,\ldots,R_k$ be the corresponding hyperrectangles for which $f$ is constant when restricted to each $R_\ell$. Define $\mathcal{X}_\ell := R_\ell \cap \{X_1,\ldots,X_n\}$ and $N_\ell := |\mathcal{X}_\ell|$. Then for fixed $X_1,\ldots,X_n$, we have $T(\theta_{f,X}, \mathcal{M}(G_X)) \subseteq \bigoplus_{\ell=1}^k T\bigl(0, \mathcal{M}(G_{\mathcal{X}_\ell})\bigr) = \bigoplus_{\ell=1}^k \mathcal{M}(G_{\mathcal{X}_\ell})$. Hence by \citet[Proposition~3.1(9, 10)]{Amelunxenetal2014} and \eqref{Eq:StatDim}, we have that
\begin{align}
  \label{Eq:tmp5}
  \mathbb{E}\,\delta\bigl(T(\theta_{f,X},\mathcal{M}(G_X))\bigr) &= \mathbb{E}\Bigl[\mathbb{E}\Bigl\{\delta\bigl(T(\theta_{f,X}, \mathcal{M}(G_X))\bigr)\Bigm| N_1,\ldots,N_k\Bigr\}\Bigr]  \nonumber\\
  &\leq \mathbb{E}\biggl[\sum_{\ell=1}^k \mathbb{E}\Bigl\{\delta\bigl(\mathcal{M}(G_{\mathcal{X}_\ell})\bigr)\Bigm| N_\ell\Bigr\}\biggr] \nonumber\\
  &\lesssim_{d,m_0,M_0} \mathbb{E}\biggl\{ \sum_{\ell=1}^k N_\ell^{1-2/d} \log_+^{2\gamma_d} N_\ell \biggr\} \lesssim_{d} n  (k/n)^{2/d} \log_+^{2\gamma_d} (n/k),
\end{align}
where the final bound follows from applying Lemma~\ref{Lemma:Jensen+} to the function $x\mapsto x^{1-2/d} \log_+^{2\gamma_d}(x)$. We complete the proof by substituting~\eqref{Eq:tmp5} into~\eqref{Eq:tmp4} and observing that \[
\frac{1}{n}\inf_{f\in\mathcal{F}_{d}^{(k)}} \mathbb{E} \|\theta_{f,X}-\theta_{f_0,X}\|_2^2 = \inf_{f\in\mathcal{F}_d^{(k)}} \mathbb{E} \|f-f_0\|_{L_2(\mathbb{P}_n)}^2 = \inf_{f\in\mathcal{F}_d^{(k)}}\|f-f_0\|_{L_2(P)}^2,
\]
as required.
\end{proof}

\begin{proof}[Proof of Proposition~\ref{Prop:RandomLower}]
Without loss of generality, we may assume that $n = n_1^d$ for some $n_1\in\mathbb{N}$. Let $W := \{w \in \mathbb{L}_{d,n} : \sum_{j=1}^d w_j = n_1\}$. For any $w\in W$, define $\mathcal{C}_w := \prod_{j=1}^d \bigl(\frac{w_j-1}{n_1}, \frac{w_j}{n_1}\bigr]$.  Note that $x = (x_1,\ldots,x_d)^\top \in \cup_{w \in W} \mathcal{C}_{w}$ if and only if $\lceil n_1x_1\rceil + \cdots + \lceil n_1x_d\rceil = n_1$.  For any $\tau = (\tau_w) \in \{0,1\}^{|W|} =: T$, we define $f_\tau \in \mathcal{F}_d$ by
  \[
    f_\tau(x) := \begin{cases}
                   0 & \text{if $\lceil n_1x_1\rceil + \cdots + \lceil n_1x_d\rceil \leq n_1-1$}\\
                   1 & \text{if $\lceil n_1x_1\rceil + \cdots + \lceil n_1x_d\rceil \geq n_1+1$}\\
                   \rho\tau_{(\lceil n_1x_1\rceil,\ldots,\lceil n_1x_d\rceil)} & \text{if $x \in \cup_{w \in W} \mathcal{C}_{w}$},
                 \end{cases}
  \]
  where $\rho\in[0,1]$ is to be specified later. Moreover, let $\tau^w$ be the binary vector differing from $\tau$ in only the $w$ coordinate. We write $E_\tau$ for the expectation over $(X_1,Y_1),\ldots,(X_n,Y_n)$, where $Y_i = f_\tau(X_i) + \epsilon_i$ for $i=1,\ldots,n$. We let $E_X$ be the expectation over $(X_i)_{i=1}^n$ alone and $E_{Y|X,\tau}$ be the conditional expectation of $(Y_i)_{i=1}^n$ given $(X_i)_{i=1}^n$.  Given any estimator $\tilde f_n$, we have
  \begin{align}
    \max_{\tau\in T} E_\tau &\bigl\|\tilde f_n - f_\tau\bigr\|_{L_2(\mathbb{P}_n)}^2  \geq \frac{1}{2^{|W|}} \sum_{w\in W}\sum_{\tau\in T} E_\tau \int_{\mathcal{C}_w} (\tilde f_n - f_\tau)^2 \, \mathrm{d}\mathbb{P}_n\nonumber\\
    &= \frac{1}{2^{|W|+1}} \sum_{w\in W}\sum_{\tau\in T} \biggl\{E_\tau \int_{\mathcal{C}_w} (\tilde f_n - f_\tau)^2 \, \mathrm{d}\mathbb{P}_n + E_{\tau^w} \int_{\mathcal{C}_w} (\tilde f_n - f_{\tau^w})^2 \, \mathrm{d}\mathbb{P}_n\biggr\}\nonumber\\
    &=\frac{1}{2^{|W|+1}}\sum_{w\in W}\sum_{\tau\in T} E_X\biggl\{E_{Y|X,\tau} \int_{\mathcal{C}_w} (\tilde f_n - f_\tau)^2 \, \mathrm{d}\mathbb{P}_n + E_{Y|X,\tau^w} \int_{\mathcal{C}_w} (\tilde f_n - f_{\tau^w})^2 \, \mathrm{d}\mathbb{P}_n\biggr\}\nonumber\\
    &\geq \frac{1}{2^{|W|+1}}\sum_{w\in W}\sum_{\tau\in T} E_X\biggl\{ \frac{1}{4}\int_{\mathcal{C}_w}(f_\tau-f_{\tau^w})^2\,\mathrm{d}\mathbb{P}_n\Bigl[1-d_{\mathrm{TV}}\Bigl(P_{Y|X,\tau}, P_{Y|X,\tau^w}\Bigr)\Bigr]\biggr\},\label{Eq:Assouad}
  \end{align}
  where $P_{Y|X,\tau}$ (respectively $P_{Y|X,\tau^w}$) is the conditional distribution of $(Y_i)_{i=1}^n$ given $(X_i)_{i=1}^n$ when the true signal is $f_\tau$ (respectively $f_{\tau^w}$). The final inequality in the above display follows because for $\Delta := \bigl(\int_{\mathcal{C}_w} (f_\tau - f_{\tau^w})^2 \, \mathrm{d}\mathbb{P}_n\bigr)^{1/2}$ and $A := \bigl\{\int_{\mathcal{C}_w} (\tilde{f}_n - f_\tau)^2 \, \mathrm{d}\mathbb{P}_n \geq \Delta^2/4\bigr\}$, we have
  \begin{align*}
    E_{Y|X,\tau}\int_{\mathcal{C}_w} (\tilde f_n - f_\tau)^2 \, \mathrm{d}\mathbb{P}_n + E_{Y|X,\tau^w} \int_{\mathcal{C}_w} (\tilde f_n - f_{\tau^w})^2 \, \mathrm{d}\mathbb{P}_n &\geq \frac{\Delta^2}{4}\bigl\{P_{Y|X,\tau}(A) + P_{Y|X,\tau^w}(A^c)\Bigr\}\\
    &\geq \frac{\Delta^2}{4}\bigl\{1-d_{\mathrm{TV}}\bigl(P_{Y|X,\tau}, P_{Y|X,\tau^w}\bigr)\bigr\}.
  \end{align*}
By Pinsker's inequality \citep[cf.~][p.~62]{Pollard2002}, we obtain that
  \begin{equation}
    \label{Eq:TV}
    d_{\mathrm{TV}}^2\Bigl(P_{Y|X,\tau}, P_{Y|X,\tau^w}\Bigr) \leq \frac{1}{2} d_{\mathrm{KL}}^2(P_{Y|X,\tau},P_{Y|X,\tau^w}) = \frac{n}{4}\|f_{\tau} - f_{\tau^w}\|_{L_2(\mathbb{P}_n)}^2.
  \end{equation}
Writing $N_w := \sum_{i=1}^n \mathbbm{1}_{\{X_i \in \mathcal{C}_w\}}$, we have $N_w\sim \mathrm{Bin}(n, P(\mathcal{C}_w))$, so $E_X N_w \geq m_0$ and $E_X N_w^{3/2} \leq (E_X N_w^2 \, E_X N_w)^{1/2} \leq 2^{1/2}M_0^{3/2}$. Thus, together with~\eqref{Eq:TV}, we have
  \begin{align*}
    E_X\biggl\{\int_{\mathcal{C}_w}(f_\tau-f_{\tau^w})^2\,\mathrm{d}\mathbb{P}_n\Bigl[1-& d_{\mathrm{TV}}\Bigl(P_{Y|X,\tau}, P_{Y|X,\tau^w}\Bigr)\Bigr]\biggr\}\\
    &\geq E_X \biggl\{\|f_{\tau} - f_{\tau^w}\|_{L_2(\mathbb{P}_n)}^2\biggl( 1 - \frac{n^{1/2}}{2}\|f_{\tau} - f_{\tau^w}\|_{L_2(\mathbb{P}_n)} \biggr)\biggr\}\\
    & =\frac{\rho^2}{n} E_X N_w -\frac{\rho^3}{2n}E_X  N_w^{3/2} \geq \frac{\rho^2}{n}\biggl(m_0-\frac{\rho}{2^{1/2}} M_0^{3/2}\biggr).
  \end{align*}
  Substituting the above inequality into~\eqref{Eq:Assouad}, we obtain that for $\rho = 2^{3/2}m_0/(3M_0^{3/2})$, 
  \[
    \max_{\tau\in T} E_\tau \bigl\|\tilde f_n - f_\tau\bigr\|_{L_2(\mathbb{P}_n)}^2 \geq \frac{|W|}{27n}\frac{m_0^3}{M_0^3} \geq c_{d,m_0,M_0} n^{-1/d},
  \]
  where the final inequality follows from a counting argument as in~\eqref{Eq:DiagonalCardinality}.  This completes the proof.
\end{proof}

\begin{proof}[Proof of Proposition~\ref{Prop:RandomAdaptLower}]
  Clearly we only need to establish the claim for $f_0 = 0$. By Lemma~\ref{Lemma:RandomAntichain}, there is an event $\mathcal{E}$ with probability at least $1- e^{-ed^{-1} (M_0 n)^{1/d}\log (M_0 n)}$ on which the data points $X_1,\ldots,X_n$ contain a maximal antichain $W_X$ with cardinality at least $n^{1-1/d}/(2eM_0^{1/d})$. Write $W_X^+ := \{X_i : \exists w \in W_X, X_i \succ w\}$ and $W_X^- := \{X_i : \exists w \in W_X, X_i \prec w\}$. For each realisation of the $n$-dimensional Gaussian random vector $\epsilon$, we define $\theta_X = \theta_X(\epsilon) = ((\theta_X)_w)$ by 
  \[
    (\theta_X)_w := \begin{cases}
                 1 & \text{ if $w \in W_X^+$}\\
                 \mathrm{sgn}(\epsilon_w) & \text{ if $w \in W_X$}\\
                 -1 & \text{ if $w \in W_X^-$}.
               \end{cases}
  \]
  We see that $\theta_X \in \mathcal{M}(G_X)$. By \citet[Theorem~1.1]{Chatterjee2014}, for $f_0 = 0$, we have that
  \[
    n^{1/2} \bigl\|\hat f_n\bigr\|_{L_2(\mathbb{P}_n)} = \argmax_{t\geq 0}\biggl(\sup_{\theta \in\mathcal{M}(G_X)\cap B_2(t)} \langle \epsilon, \theta \rangle - t^2/2\biggr) = \sup_{\theta \in\mathcal{M}(G_X)\cap B_2(1)} \langle \epsilon, \theta \rangle.
  \]
  Hence
  \begin{align}
    \mathbb{E}\bigl\|\hat f_n\bigr\|_{L_2(\mathbb{P}_n)} &= \frac{1}{n^{1/2}}\mathbb{E}\sup_{\theta \in\mathcal{M}(G_X)\cap B_2(1)} \langle \epsilon, \theta \rangle \geq \frac{1}{n^{1/2}}\mathbb{E}\biggl(\biggl\langle \epsilon, \frac{\theta_X(\epsilon)}{\|\theta_X(\epsilon)\|_2}\biggr\rangle \mathbbm{1}_\mathcal{E}\biggr) \nonumber\\
    & \geq \frac{1}{n}\mathbb{E}\biggl(\sum_{i: X_i\in W_X^+} \epsilon_i \mathbbm{1}_\mathcal{E} - \sum_{i: X_i\in W_X^-} \epsilon_i \mathbbm{1}_\mathcal{E} + \sum_{i: X_i\in W_X} |\epsilon_i| \mathbbm{1}_\mathcal{E}\biggr).\label{Eq:tmp8}
  \end{align}
  The first two terms in the bracket are seen to be zero by computing the expectation conditionally on $X_1,\ldots,X_n$. For the third term, we have that
  \begin{equation}
    \label{Eq:tmp9}
    \mathbb{E}\biggl(\sum_{i: X_i\in W_X} |\epsilon_i| \mathbbm{1}_\mathcal{E}\biggr) = \mathbb{E}\biggl\{\sum_{i: X_i\in W_X} \mathbb{E}^X \bigl(|\epsilon_i| \mathbbm{1}_\mathcal{E}\bigr) \biggr\} \geq \Bigl(\frac{2}{\pi}\Bigr)^{1/2}\mathbb{E} \bigl(|W_X|\mathbbm{1}_\mathcal{E}\bigr) \gtrsim_{d,M_0} n^{1-1/d},
  \end{equation}
  where the final inequality follows from Lemma~\ref{Lemma:RandomAntichain}. By~\eqref{Eq:tmp8},~\eqref{Eq:tmp9} and the Cauchy--Schwarz inequality, we have that
  \[
    \mathbb{E} \bigl\|\hat f_n\bigr\|_{L_2(\mathbb{P}_n)}^2 \geq \bigl\{\mathbb{E} \bigl\|\hat f_n\bigr\|_{L_2(\mathbb{P}_n)}\bigr\}^2 \gtrsim_{d,M_0} n^{-2/d},
  \]
  as desired.
\end{proof}

\subsection{Proof of Proposition~\ref{Prop:RandomAdaptUpper}}
\label{Sec:ProofRandomAdaptUpper}

The proof of Proposition~\ref{Prop:RandomAdaptUpper} is rather technical, so we sketch a brief outline of the main steps below:
\begin{enumerate}[label={\textbf{Step \arabic*}.}, leftmargin=*, align=left]
\item Instead of bounding $R(\hat{f}_n,0)$ directly, we first consider bounding
\begin{equation}
  \label{Eq:Step1}
  \mathbb{E} \bigl\{\| \hat f_n\|_{L_2(P)}^2 \mathbbm{1}_{\{\|\hat f_n\|_\infty \leq 4\log^{1/2} n\}}\bigr\}.
\end{equation}
By Proposition~\ref{Prop:Reduction}, this task essentially reduces to understanding two empirical processes~\eqref{Eq:GaussianMultiplier} and~\eqref{Eq:RademacherQuadratic}.  By means of Lemmas~\ref{Lemma:Conversion} and~\ref{Lemma:Contraction}, this in turn reduces to the study of the symmetrised local empirical process
\begin{equation}
  \label{Eq:RademacherMultiplier}
  \mathbb{E} \sup_{f\in\mathcal{F}_d\cap B_\infty(1)\cap B_2(r,P)}\biggl|\frac{1}{n^{1/2}}\sum_{i=1}^n \xi_i f(X_i)\biggr|,
\end{equation}
for a suitable $L_2(P)$ radius $r$.

\item To obtain a sharp bound on the empirical process in~\eqref{Eq:RademacherMultiplier}, which constitutes the main technical challenge of the proof, we slice $[0,1]^d$ into strips of the form $[0,1]^{d-1} \times [\frac{\ell-1}{n_1}, \frac{\ell}{n_1}]$, for $\ell = 1,\ldots,n_1$, and decompose  $\sum_{i=1}^n\xi_i f(X_i)$ into sums of smaller empirical processes over these strips. Each of these smaller empirical processes is then controlled via a bracketing entropy chaining argument (Lemma~\ref{Lemma:LocalMaximalInequality}).  The advantage of this decomposition is that the block monotonicity permits good control of the $L_2(P)$ norm of the envelope function in each strip (Lemma~\ref{Lemma:EnvelopeIntegral}).

\item Having bounded~\eqref{Eq:RademacherMultiplier}, and hence also~\eqref{Eq:Step1}, we finally translate the bound of~\eqref{Eq:Step1} back to a bound for $R(\hat{f}_n,0)$, our original quantity of interest. The cost of $L_\infty$ truncation is handled in Lemma~\ref{Lemma:Linfty}, whereas our understanding of the symmetrised empirical process in~\eqref{Eq:RademacherMultiplier} helps to control the discrepancy between the $L_2(P)$ norm and $L_2(\mathbb{P}_n)$ norm risks (cf.\ Proposition~\ref{Prop:RatioEmpiricalProcess}). 
\end{enumerate}

In our empirical process theory arguments, since our least squares estimator $\hat{f}_n$ is defined to be lower semi-continuous, we can avoid measurability and countability digressions by defining $\mathcal{G}$ the class of real-valued lower semi-continuous functions on $[0,1]^d$ and $\mathcal{F}_d':=\{f\in\mathcal{F}_d\cap\mathcal{G}: f|_{(\mathbb{Q}\cap[0,1])^d} \subseteq \mathbb{Q}\}$. This is a countable, uniformly dense\footnote{Here `uniformly dense' means that for any $f \in \mathcal{F}_d\cap \mathcal{G}$, we can find a sequence $(f_m)$ in $\mathcal{F}_d'$ such that $\|f_m-f\|_\infty \to 0$. This can be done by defining, e.g., $f_m(x) := m^{-1}\lceil mf(x)\rceil$.} subset of $\mathcal{F}_d\cap\mathcal{G}$ so that, for example, $\sup_{f\in\mathcal{F}_{d}\cap\mathcal{G}} \mathbb{G}_n f= \sup_{f\in\mathcal{F}_{d}'} \mathbb{G}_n f$. 

The main content of Step 1 is the following proposition.
\begin{prop}
  \label{Prop:Reduction}
  Suppose that for each $n \in \mathbb{N}$ there exist a function $\phi_n: [0,\infty)\to [0,\infty)$ and $r_n \geq n^{-1/2}\log^{1/2}n$ such that $\phi_n(r_n) \leq n^{1/2}r_n^2$.  Moreover, assume that for all $r \geq r_n$ the map $r\mapsto \phi_n(r)/r$ is non-increasing and 
  \begin{equation}
  \label{Eq:GaussianMultiplier}
  \mathbb{E} \sup_{f\in\mathcal{F}_d\cap B_\infty(4\log^{1/2} n)\cap B_2(r, P)} \biggl|\frac{1}{n^{1/2}}\sum_{i=1}^n\epsilon_i f(X_i)\biggr| \leq C_1 \phi_n(r),
  \end{equation}
  and
  \begin{equation}
    \label{Eq:RademacherQuadratic}
    \mathbb{E} \sup_{f\in\mathcal{F}_d\cap B_\infty(4\log^{1/2} n)\cap B_2(r, P)} \biggl|\frac{1}{n^{1/2}}\sum_{i=1}^n\xi_i f^2(X_i)\biggr| \leq C_2 \phi_n(r),
  \end{equation}
  for some constants $C_1, C_2 > 0$ that do not depend on $r$  and $n$. Then for $f_0 = 0$, we have that
  \[
    \mathbb{E}\bigl\{\|\hat f_n\|_{L_2(P)}^2 \mathbbm{1}_{\{\|\hat f_n\|_\infty\leq 4\log^{1/2} n\}}\bigr\} \lesssim_{C_1,C_2} r_n^2.
  \]
\end{prop}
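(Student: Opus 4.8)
Here is the strategy. The argument is a peeling (slicing) argument over dyadic shells of $\|\hat f_n\|_{L_2(P)}$, combining the basic inequality for the least squares estimator with concentration of the two empirical processes in~\eqref{Eq:GaussianMultiplier} and~\eqref{Eq:RademacherQuadratic}. Write $\mathcal G_n:=\mathcal F_d\cap B_\infty(4\log^{1/2}n)$ and, for $\rho>0$, set $M_n(\rho):=\sup_{f\in\mathcal G_n\cap B_2(\rho,P)}\bigl|\tfrac1n\sum_{i=1}^n\epsilon_i f(X_i)\bigr|$ and $D_n(\rho):=\sup_{f\in\mathcal G_n\cap B_2(\rho,P)}\bigl|\|f\|_{L_2(\mathbb{P}_n)}^2-\|f\|_{L_2(P)}^2\bigr|$. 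Since $f_0=0$ we have $Y_i=\epsilon_i$, so testing the minimising property of $\hat f_n$ against $f\equiv 0$ yields the basic inequality $\|\hat f_n\|_{L_2(\mathbb{P}_n)}^2\le\tfrac2n\sum_{i=1}^n\epsilon_i\hat f_n(X_i)$. On $\mathcal E_n:=\{\|\hat f_n\|_\infty\le 4\log^{1/2}n\}$ we moreover have $\hat f_n\in\mathcal G_n$, and if $\|\hat f_n\|_{L_2(P)}\le\rho$ then $\hat f_n\in\mathcal G_n\cap B_2(\rho,P)$. Thus on the shell $S_j:=\mathcal E_n\cap\{2^jr_n<\|\hat f_n\|_{L_2(P)}\le 2^{j+1}r_n\}$ with $r:=2^jr_n$, the basic inequality together with $\|\hat f_n\|_{L_2(P)}^2\le\|\hat f_n\|_{L_2(\mathbb{P}_n)}^2+D_n(2r)$ gives $S_j\subseteq\{2M_n(2r)+D_n(2r)>r^2\}$. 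Shells with $2^jr_n>4\log^{1/2}n$ are empty (as $\|\hat f_n\|_{L_2(P)}\le\|\hat f_n\|_\infty$ on $\mathcal E_n$), so there are only $O(\log n)$ nonempty shells, and since $\mathbb{E}[\|\hat f_n\|_{L_2(P)}^2\mathbbm 1_{\mathcal E_n\cap\{\|\hat f_n\|_{L_2(P)}\le r_n\}}]\le r_n^2$, it remains to control $\sum_{j\ge 0}(2^{j+1}r_n)^2\,\mathbb{P}(S_j)$.

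The next step is to bound the means. Hypothesis~\eqref{Eq:GaussianMultiplier} gives $\mathbb{E} M_n(\rho)\le C_1 n^{-1/2}\phi_n(\rho)$ directly, and a symmetrisation inequality combined with~\eqref{Eq:RademacherQuadratic} applied to the class $\{f^2:f\in\mathcal G_n\cap B_2(\rho,P)\}$ gives $\mathbb{E} D_n(\rho)\le 2C_2 n^{-1/2}\phi_n(\rho)$; moreover, since $\phi_n(r)/r$ is non-increasing on $[r_n,\infty)$ and $\phi_n(r_n)\le n^{1/2}r_n^2$, we have $n^{-1/2}\phi_n(\rho)\le r_n\rho$ for all $\rho\ge r_n$. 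Applying these with $\rho=2r\ge r_n$ on $S_j$ yields $\mathbb{E}[2M_n(2r)+D_n(2r)]\le C_4\,r_nr=C_4\,2^{-j}r^2$. Fix a universal constant $j_0$, depending only on $C_1,C_2$, so that this mean is at most $r^2/8$ whenever $j>j_0$; then for such $j$ the event $\{2M_n(2r)+D_n(2r)>r^2\}$ is contained in $\{M_n(2r)-\mathbb{E} M_n(2r)>cr^2\}\cup\{D_n(2r)-\mathbb{E} D_n(2r)>cr^2\}$ for a numerical constant $c>0$.

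To control these deviations I use concentration of suprema of essentially bounded empirical processes. Confinement to $\mathcal G_n$ forces $\|f\|_\infty\le 4\log^{1/2}n$, so $D_n(2r)$ is a supremum of sums of summands bounded by $16\log n$ with variances at most $16\log n\cdot\sup_{f\in\mathcal G_n\cap B_2(2r,P)}\|f\|_{L_2(P)}^2\le 64r^2\log n$; Bousquet's inequality therefore gives $\mathbb{P}\bigl(D_n(2r)-\mathbb{E} D_n(2r)>cr^2\bigr)\lesssim\exp(-c' nr^2/\log n)$. After restricting to the event $\mathcal T:=\{\max_i|\epsilon_i|\le\sqrt{6\log n}\}$, which has probability at least $1-2n^{-2}$ and on which $|\epsilon_i f(X_i)|\lesssim\log n$, the same inequality applied to (the truncated version of) $M_n(2r)$ gives $\mathbb{P}\bigl(\{M_n(2r)-\mathbb{E} M_n(2r)>cr^2\}\cap\mathcal T\bigr)\lesssim\exp(-c'nr^2/\log n)$. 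Since $r=2^jr_n$ and $r_n\ge n^{-1/2}\log^{1/2}n$ we have $nr^2/\log n\ge 4^j$, hence $\mathbb{P}(S_j\cap\mathcal T)\lesssim e^{-c'4^j}$ for $j>j_0$. Summing, $\sum_{j\ge 0}(2^{j+1}r_n)^2\,\mathbb{P}(S_j\cap\mathcal T)\le\sum_{0\le j\le j_0}(2^{j+1}r_n)^2+\sum_{j>j_0}(2^{j+1}r_n)^2 e^{-c'4^j}\lesssim 4^{j_0}r_n^2+r_n^2\lesssim r_n^2$, using the trivial bound $\mathbb{P}(S_j)\le 1$ at the $O(1)$ small scales and the summability of $\sum_j 4^j e^{-c'4^j}$; and the complement contributes $\mathbb{E}[\|\hat f_n\|_{L_2(P)}^2\mathbbm 1_{\mathcal E_n\cap\mathcal T^c}]\le 16\log n\cdot 2n^{-2}\lesssim r_n^2$. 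Combining the three contributions gives $\mathbb{E}[\|\hat f_n\|_{L_2(P)}^2\mathbbm 1_{\mathcal E_n}]\lesssim r_n^2$.

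The main obstacle is the concentration step, specifically obtaining variance proxies of the correct order $r^2$ (up to the $\log n$ that the Gaussian multipliers unavoidably introduce): the naive bound $\|f\|_\infty^2\asymp\log n$ is far too large and would ruin the estimate. This is precisely where the $L_\infty$ truncations — the class $\mathcal G_n$ on the one hand and the event $\mathcal T$ on the other — together with the sharp variance bound $\sup_{f\in\mathcal G_n\cap B_2(2r,P)}\|f\|_{L_2(P)}^2\le 4r^2$ come into play. A second point worth stressing is that one should \emph{not} attempt to make $\mathbb{P}(S_j)$ small at the small scales $j\le j_0$ — at the scale $r_n$ itself the empirical fluctuations are genuinely of order $r_n^2$ — but instead observe that there are only $O(1)$ such scales, each contributing at most $(2^{j_0+1}r_n)^2=O(r_n^2)$; this is what makes the argument robust all the way down to $r_n\asymp n^{-1/2}\log^{1/2}n$.
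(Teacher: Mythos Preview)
Your argument is correct and follows the same overall architecture as the paper's proof: the basic inequality $\|\hat f_n\|_{L_2(\mathbb{P}_n)}^2\le \tfrac{2}{n}\sum_i\epsilon_i\hat f_n(X_i)$, a dyadic peeling of $\|\hat f_n\|_{L_2(P)}$, and the observation that on each shell the event is contained in a large-deviation event for $M_n(2r)$ and $D_n(2r)$, whose means are controlled by~\eqref{Eq:GaussianMultiplier},~\eqref{Eq:RademacherQuadratic} and the sublinearity of $\phi_n$.

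The genuine difference lies in how you convert the first-moment control into tail bounds. The paper does \emph{not} use Talagrand/Bousquet concentration. Instead it invokes a fourth-moment inequality for suprema of empirical processes (Gin\'e--Lata{\l}a--Zinn) to pass from $\mathbb{E}\sup|\cdot|$ to $\mathbb{E}(\sup|\cdot|)^4$, and then applies Markov's inequality at the fourth-moment level to obtain the polynomial tail $\mathbb{P}(\{\|\hat f_n\|_{L_2(P)}\ge sr_n\}\cap\mathcal E_n)\lesssim s^{-4}$, which is just integrable against $s$. This avoids any truncation of the Gaussian multipliers and any explicit variance bookkeeping: the moment inequality absorbs the unboundedness of $\epsilon_i$ directly. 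Your route instead truncates the $\epsilon_i$ via the event $\mathcal T$ to force bounded summands, then feeds the sharp variance proxy $\sup_f\|f\|_{L_2(P)}^2\le 4r^2$ into Bousquet's inequality to get exponential tails $e^{-c'4^j}$. Both work; your exponential decay is stronger than needed, at the cost of the extra truncation step and the (routine) verification that $\mathbb{E}\tilde M_n(2r)$ differs from $\mathbb{E} M_n(2r)$ only by a negligible amount. The paper's approach is shorter once the moment inequality is taken as a black box, while yours is more self-contained and makes the role of the $L_\infty$ truncation and the $L_2(P)$ localisation more transparent.
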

\begin{proof} 
Let $\mathbb{M}_n f := \frac{2}{n}\sum_{i=1}^n \epsilon_if(X_i) - \frac{1}{n}\sum_{i=1}^n f^2(X_i)$ and $M f := \mathbb{E} (\mathbb{M}_n f) = -\|f\|_{L_2(P)}^2 =: -Pf^2$. Then
\[
  |\mathbb{M}_n f - M f| \leq \biggl|\frac{2}{n}\sum_{i=1}^n \epsilon_if(X_i)\biggr| + \bigl|(\mathbb{P}_n - P) f^2\bigr|.
\]
Moreover, by definition of $\hat{f}_n$, we have $\sum_{i=1}^n \{\epsilon_i - \hat f_n(X_i)\}^2 \leq \sum_{i=1}^n \epsilon_i^2$, so $\mathbb{M}_n \hat f_n \geq 0$. Fix $s \geq 1$ and for $\ell \in\mathbb{N}$, let $\mathcal{F}_{d,\ell} := \mathcal{F}_d\cap B_\infty(4\log^{1/2} n)\cap B_2(2^\ell sr_n)$. 	
Then by a union bound, we have
\begin{align}
  \label{Eq:prop8tmp1}
  &\mathbb{P}\Bigl(\bigl\{\|\hat f_n\|_{L_2(P)} \geq s r_n\bigr\} \cap \bigl\{\|\hat f_n\|_\infty \leq 4\log^{1/2} n\bigr\}\Bigr) \leq \sum_{\ell=1}^\infty \mathbb{P}\biggl(\sup_{f\in\mathcal{F}_{d,\ell} \setminus \mathcal{F}_{d,\ell-1}} \mathbb{M}_n f \geq 0\biggr) \nonumber \\
  &\leq \sum_{\ell=1}^\infty \mathbb{P}\biggl(\sup_{f\in\mathcal{F}_{d,\ell}} \bigl(\mathbb{M}_n f - M f\bigr)\geq 2^{2\ell-2}s^2 r_n^2\biggr)\nonumber\\
  &\leq \sum_{\ell=1}^\infty \mathbb{P}\biggl(\sup_{f\in \mathcal{F}_{d,\ell}}\biggl|\frac{1}{n^{1/2}}\sum_{i=1}^n\epsilon_i f(X_i)\biggr| \geq 2^{2\ell-4} s^2n^{1/2}r_n^2\biggr)
  + \sum_{\ell=1}^\infty \mathbb{P}\biggl(\sup_{f\in \mathcal{F}_{d,\ell}}\Bigl|\mathbb{G}_n f^2 \Bigr| \geq 2^{2\ell-3} s^2n^{1/2} r_n^2\biggr).
\end{align}
By a moment inequality for empirical processes \citep[Proposition~3.1]{GineLatalaZinn2000} and~\eqref{Eq:GaussianMultiplier}, we have
\begin{align}
\label{Eq:prop8tmp3}
  \mathbb{E}\biggl\{\biggl(\sup_{f\in \mathcal{F}_{d,\ell}}\biggl|\frac{1}{n^{1/2}}\sum_{i=1}^n\epsilon_i f(X_i)\biggr|\biggr)^4\biggr\} &\lesssim_{C_1} \phi_n^4(2^\ell s r_n) +(2^\ell sr_n)^4+ \frac{\log^4 n}{n^2}.
\end{align}
Similarly, by symmetrisation (cf.\ \citet[Lemma~2.3.1]{vanderVaartWellner1996}), the moment inequality for empirical processes mentioned above and condition~\eqref{Eq:RademacherQuadratic}, we have 
\begin{equation}
 \mathbb{E} \biggl\{\biggl(\sup_{f\in \mathcal{F}_{d,\ell}} \bigl|\mathbb{G}_n f^2\bigr|\biggr)^4\biggr\} \lesssim
 \mathbb{E} \biggl\{\biggl(\sup_{f\in \mathcal{F}_{d,\ell}}\biggl|\frac{1}{n^{1/2}}\sum_{i=1}^n\xi_i f^2(X_i)\biggr|\biggr)^4\biggr\} \lesssim_{C_2} \phi_n^4(2^\ell s r_n) + (2^\ell sr_n)^4+\frac{\log^2 n}{n^2}.\label{Eq:prop8tmp4}
\end{equation}
By~\eqref{Eq:prop8tmp1}, \eqref{Eq:prop8tmp3}, \eqref{Eq:prop8tmp4} and Markov's inequality, we obtain that
\begin{align}
  \label{Eq:prop8tmp5}
  \mathbb{P}\Bigl(\bigl\{\|\hat f_n\|_{L_2(P)} \geq s r_n\bigr\} \cap \bigl\{\|\hat f_n\|_\infty \leq 4\log^{1/2} n\bigr\}\Bigr) \lesssim_{C_1,C_2} \sum_{\ell=1}^\infty \biggl(\frac{\phi_n(2^\ell s r_n)}{2^{2\ell} s^2n^{1/2}r_n^2}\biggr)^4 + \frac{1}{s^4} \lesssim \frac{1}{s^4},
\end{align}
where we have used the assumption $r_n\geq n^{-1/2} \log^{1/2} n$ and the fact that $\phi_n(2^\ell s r_n)\leq 2^\ell s\phi_n(r_n) \leq 2^\ell s n^{1/2}r_n^2$ for the non-increasing function $r\mapsto \phi_n(r)/r$. The bound in~\eqref{Eq:prop8tmp5} is valid for all $s \geq 1$. Hence
\begin{align*}
  \mathbb{E}\bigl\{\|\hat f_n\|_{L_2(P)}^2 \mathbbm{1}_{\{\|\hat f_n\|_\infty\leq 4\log^{1/2} n\}}\bigr\} &= \int_{0}^\infty \mathbb{P}\bigl\{\bigl\|\hat f_n\bigr\|_{L_2(P)}^2\mathbbm{1}_{\{\|\hat f_n\|_\infty\leq 4\log^{1/2} n\}} \geq t\bigr\}\ \mathrm{d}t\\
  &\leq r_n^2 + 2r_n^2 \int_{1}^\infty s \, \mathbb{P}\bigl\{\bigl\|\hat f_n\bigr\|_{L_2(P)}^2\mathbbm{1}_{\{\|\hat f_n\|_\infty\leq 4\log^{1/2} n\}} \geq s^2r_n^2\bigr\}\ \mathrm{d}s\\
  &\lesssim_{C_1,C_2} r_n^2,
\end{align*}
as desired.
\end{proof}

The proposition below on the size of the symmetrised empirical process solves Step 2 in the outline of the proof of Proposition~\ref{Prop:RandomAdaptUpper}.
\begin{prop}
\label{Prop:SymmetrisedProcess}
Fix $d\geq 2$ and suppose that $r \geq n^{-\max\{1/d,(1-2/d)\}}\log^{(d^2-d)/2} n$. There exists a constant $C_{d,m_0,M_0}>0$, depending only on $d, m_0$ and $M_0$, such that
\[
  \mathbb{E} \sup_{f\in\mathcal{F}_d\cap B_\infty(1)\cap B_2(r, P)} \biggl|\frac{1}{n^{1/2}}\sum_{i=1}^n \xi_i f(X_i)\biggr| \leq C_{d,m_0,M_0} r n^{1/2-1/d} \log^{\gamma_d - 1/2} n.
\]
\end{prop}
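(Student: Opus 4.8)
The plan is to argue by induction on the dimension $d$, the case $d=2$ serving as the base: there the bound can be read off from the estimate on the statistical dimension of the bivariate monotone cone in \citet[Theorem~2.1]{ChatterjeeGuntuboyinaSen2017}, once it is transferred from the fixed lattice to our bounded-density random design by comparing $\mathbb{P}_n$ with $P$ (a ratio-type empirical process bound of the kind in Proposition~\ref{Prop:RatioEmpiricalProcess}). For the inductive step, I would set $n_1 := \lceil n^{1/d}\rceil$, partition the cube into strips $[0,1]^d = \bigsqcup_{\ell=1}^{n_1}S_\ell$ with $S_\ell := [0,1]^{d-1}\times(\tfrac{\ell-1}{n_1},\tfrac{\ell}{n_1}]$, write $N_\ell := |\{i : X_i\in S_\ell\}|$, and split the symmetrised sum over strips:
\[
  \mathbb{E}_\xi\sup_{f}\Bigl|\frac{1}{n^{1/2}}\sum_{i=1}^n\xi_i f(X_i)\Bigr| \le \sum_{\ell=1}^{n_1}\mathbb{E}_\xi\sup_{f}\Bigl|\frac{1}{n^{1/2}}\sum_{i:\,X_i\in S_\ell}\xi_i f(X_i)\Bigr|.
\]
This reduces matters to bounding, for each $\ell$, the symmetrised process indexed by the trace class $\mathcal{G}_\ell := \{f|_{S_\ell}: f\in\mathcal{F}_d\cap B_\infty(1)\cap B_2(r,P)\}$.

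The per-strip estimate rests on two consequences of block monotonicity. First, since $f$ is monotone in the last coordinate, the envelope of $\mathcal{G}_\ell$ equals $F_\ell(x') = \max\{|f(x',\tfrac{\ell-1}{n_1})|,|f(x',\tfrac{\ell}{n_1})|\}$, and a Riemann-sum comparison (legitimate as $f(x',\cdot)$ is monotone) together with $p_0\ge m_0$ gives $\sum_{\ell=1}^{n_1}\|F_\ell\|_{L_2(P)}^2\lesssim_{m_0,M_0} r^2 + n^{-1/d}\lesssim r^2$ for $d\ge 3$ under the standing lower bound on $r$; this is Lemma~\ref{Lemma:EnvelopeIntegral}. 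Second, each $g\in\mathcal{G}_\ell$ is pinched between its $(d-1)$-dimensional boundary slices $f(\cdot,\tfrac{\ell-1}{n_1})$ and $f(\cdot,\tfrac{\ell}{n_1})$, which lie in $\mathcal{F}_{d-1}\cap B_\infty(1)$; extending $L_2$-brackets for $\mathcal{F}_{d-1}\cap B_\infty(1)$ constantly in the last coordinate gives $L_2(P|_{S_\ell})$-brackets for $\mathcal{G}_\ell$ of proportionally smaller width (because $S_\ell$ has thickness $n_1^{-1}$ and $p_0\le M_0$), down to an irreducible floor scale $\beta_\ell\asymp_{M_0} n_1^{-1/2}\|h_\ell\|_{L_2([0,1]^{d-1})}$ produced by the within-strip oscillation $h_\ell(x') := f(x',\tfrac{\ell}{n_1})-f(x',\tfrac{\ell-1}{n_1})\ge 0$, which by telescoping satisfies $\sum_\ell h_\ell\le 2\|f\|_\infty\le 2$ and hence $\sum_\ell\|h_\ell\|_{L_2([0,1]^{d-1})}^2\le 4$. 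Feeding the Gao--Wellner bracketing entropy of $\mathcal{F}_{d-1}\cap B_\infty(1)$ (cf.\ \eqref{Eq:GaoWellner}) and this envelope control into a local maximal inequality of bracketing-chaining type --- Lemma~\ref{Lemma:LocalMaximalInequality}, which in the massive regime $d-1\ge 3$ where the entropy integral diverges must also perform an $L_\infty$-truncation (at the floor scale) and dispose of the oscillation remainder --- should yield, for each $\ell$,
\[
  \mathbb{E}_\xi\sup_{g\in\mathcal{G}_\ell}\Bigl|\frac{1}{n^{1/2}}\sum_{i:\,X_i\in S_\ell}\xi_i g(X_i)\Bigr| \lesssim_{d,m_0,M_0} \rho_\ell\,N_\ell^{1/2-1/(d-1)}\log^{\gamma_{d-1}-1/2}N_\ell + (\text{lower-order terms}),
\]
with $\rho_\ell := \|F_\ell\|_{L_2(P|_{S_\ell})}$; the leading term is precisely what the inductive hypothesis controls, since conditionally on the points falling in $S_\ell$ these are (after rescaling $S_\ell$ to the unit cube) i.i.d.\ from a distribution whose marginal density in the first $d-1$ coordinates is bounded above and below in terms of $m_0$ and $M_0$.

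It then remains to sum over the $n_1$ strips. Concentrating $N_\ell$ around $n/n_1 = n^{1-1/d}$ by a Bernstein bound, applying the Cauchy--Schwarz inequality $\sum_\ell\rho_\ell\le n_1^{1/2}(\sum_\ell\rho_\ell^2)^{1/2}\lesssim n_1^{1/2}r$, and using the identity $n_1^{1/2}(n/n_1)^{1/2-1/(d-1)} = n^{1/2-1/d}$, the leading contribution becomes $r\,n^{1/2-1/d}\log^{\gamma_d-1/2}n$, the logarithmic powers accumulating from the inductive hypothesis together with the Bernstein step and the chaining/truncation remainders; the lower-order terms, summed over $\ell$ with a further Cauchy--Schwarz using $\sum_\ell\beta_\ell^2\lesssim n_1^{-1}$, are negligible precisely because $r\ge n^{-\max\{1/d,1-2/d\}}\log^{(d^2-d)/2}n$. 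A final expectation over $X_1,\ldots,X_n$ completes the argument.

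The step I expect to be the main obstacle is the per-strip maximal inequality in the massive regime $d-1\ge 3$: Dudley's bound is unavailable, the $B_\infty$-constraint cannot simply be dropped, and one must interleave the $L_\infty$-truncation, the reduction to the $(d-1)$-dimensional boundary slices, the envelope integral and the handling of the within-strip oscillation in such a way that (i) no power of $n_1$ is lost (which would destroy the exponent $n^{1/2-1/d}$) and (ii) the resulting bound remains \emph{linear} in the local $L_2$-radius $\rho_\ell$ (so that the Cauchy--Schwarz summation produces the overall factor $r$). Calibrating the admissible range of $r$ and the exponents $\gamma_d$ is bound up with getting exactly this balance. A secondary difficulty is transporting the fixed-lattice base-case estimate of \citet{ChatterjeeGuntuboyinaSen2017} to a random design with density bounded away from $0$ and $\infty$.
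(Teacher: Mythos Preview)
Your strip decomposition matches the paper's, but almost everything else differs, and two of your ingredients do not work as written.

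\medskip
\textbf{Base case.} Your plan for $d=2$ is circular: Proposition~\ref{Prop:RatioEmpiricalProcess} (and any ``ratio-type'' bound of that kind) is itself proved by invoking Proposition~\ref{Prop:SymmetrisedProcess}. The paper does not import the lattice result of \citet{ChatterjeeGuntuboyinaSen2017} at all for $d=2$; it applies the bracketing-entropy maximal inequality (Lemma~\ref{Lemma:LocalMaximalInequality}) directly to $\mathcal{F}_{2,\downarrow}^+\cap B_\infty(1)\cap B_2(r,P)$, using the $d=2$ case of the entropy bound in Lemma~\ref{Lemma:BracketingEntropyBound}. The entropy there is $\varepsilon^{-2}$ times poly-logs, so the chaining integral converges and already gives $r\log^4 n$.

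\medskip
\textbf{No induction.} For $d\ge 3$ the paper does \emph{not} reduce to a $(d-1)$-dimensional instance of the proposition. On each strip $I_\ell$ it again runs the \emph{$d$-dimensional} bracketing chain (Lemma~\ref{Lemma:LocalMaximalInequality} with Lemma~\ref{Lemma:BracketingEntropyBound}), but now the entropy integral diverges, so the bound is driven by the lower truncation point $\eta$ and the local $L_2$-radius $r_{n,\ell}$ of the class on that strip. Choosing $\eta\asymp n^{-1/(2d)}r_{n,\ell}$ balances the terms and yields a per-strip contribution proportional to $r_{n,\ell}$. Summing $\sum_\ell r_{n,\ell}\asymp r\log^{d/2}n\sum_\ell \ell^{-1/2}\lesssim r\,n^{1/(2d)}\log^{d/2}n$ then gives the exponent $n^{1/2-1/d}$ with no appeal to lower-dimensional results. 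So the obstacle you flag as the crux --- making the per-strip bound factor through the $(d-1)$-dimensional proposition while staying linear in $\rho_\ell$ --- is simply bypassed.

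\medskip
\textbf{Envelope control.} Your description of Lemma~\ref{Lemma:EnvelopeIntegral} is not what that lemma says. The envelope $F_r$ is the supremum over the \emph{class}, not the boundary slices of a fixed $f$; the lemma bounds it via the pointwise inequality $F_r^2(x)\le \frac{r^2}{m_0 x_1\cdots x_d}\wedge 1$ (valid for $\mathcal{F}_{d,\downarrow}^+\cap B_2(r,P)\cap B_\infty(1)$), and then integrates over $I_\ell$ to get $\int_{I_\ell}F_r^2\,\mathrm{d}P\lesssim r^2\log^d(n)/\ell$. The $\ell^{-1}$ decay is essential: it is what makes the sum $\sum_\ell r_{n,\ell}$ of local radii only $O(n_1^{1/2})$ rather than $O(n_1)$. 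Your Riemann-sum/telescoping argument controls the oscillation of a \emph{single} $f$ across strips, which is a different (and weaker) statement; in particular it does not give the $\ell^{-1}$ factor, which is why you are forced into a Cauchy--Schwarz summation that would need $\sum_\ell\rho_\ell^2\lesssim r^2$ without logarithmic loss --- a bound your argument does not establish.
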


\begin{proof}
 It is convenient here to work with the class of block decreasing functions $\mathcal{F}_{d,\downarrow} := \{f:[0,1]^d\to\mathbb{R}: -f\in \mathcal{F}_d\}$ instead. We write $\mathcal{F}_d^+ := \{f\in\mathcal{F}_d: f\geq 0\}$ and $\mathcal{F}_{d,\downarrow}^+ := \{f\in\mathcal{F}_{d,\downarrow} : f\geq 0\}$. By replacing $f$ with $-f$ and decomposing any function $f$ into its positive and negative parts, it suffices to prove the result with $\mathcal{F}_{d,\downarrow}^+$ in place of $\mathcal{F}_d$. We handle the cases $d = 2$ and $d\geq 3$ separately. 
 
 \vspace{0.5cm}\noindent\underline{Case $d = 2$.}  
 We apply Lemma~\ref{Lemma:LocalMaximalInequality} with $\eta = r/(2n)$ and Lemma~\ref{Lemma:BracketingEntropyBound} to obtain 
 \begin{align*}
 \mathbb{E}\sup_{f\in\mathcal{F}_{2,\downarrow}^+ \cap B_\infty(1)\cap B_2(r, P)}\biggl|\frac{1}{n^{1/2}}\sum_{i=1}^n\xi_i f(X_i)\biggr| & \lesssim_{d,m_0,M_0} n^{1/2}\eta + \log^3 n \int_{\eta}^{r} \frac{r}{\varepsilon} \,\mathrm{d}\varepsilon +  \frac{\log^4 n (\log\log n)^2}{n^{1/2}}\\
 &\lesssim  r\log^4 n,
 \end{align*}
 as desired.
 
 \vspace{0.5cm}\noindent\underline{Case $d \geq 3$.} We assume without loss of generality that $n = n_1^d$ for some $n_1\in\mathbb{N}$.  We define strips $I_\ell := [0,1]^{d-1}\times [\frac{\ell-1}{n_1}, \frac{\ell}{n_1}]$ for $\ell = 1,\ldots,  n_1$, so that $[0,1]^d = \cup_{\ell=1}^{n_1} I_\ell$.  Our strategy is to analyse the expected supremum of the symmetrised empirical process when restricted to each strip.  To this end, define $S_\ell := \{X_1,\ldots,X_n\}\cap I_\ell$ and $N_\ell := |S_\ell|$, and let $\Omega_0 := \{m_0n^{1-1/d}/2  \leq \min_\ell N_\ell \leq \max_\ell  N_\ell \leq 2M_0n^{1-1/d}\}$.  Then by Hoeffding's inequality, 

 \[
  \mathbb{P}(\Omega_0^c) \leq \sum_{\ell=1}^{n_1} \mathbb{P}\biggl(\Bigl|N_\ell - \mathbb{E} N_\ell \Bigr| > \frac{m_0n}{2n_1}\biggr) \leq 2n_1\exp\bigl(-m_0^2n^{1-2/d}/8\bigr).
 \]
 Hence we have
 \begin{equation}
  \mathbb{E}\sup_{f\in\mathcal{F}_{d,\downarrow}^+\cap B_\infty(1)\cap B_2(r,P)}\Bigl|\frac{1}{n^{1/2}}\sum_{i=1}^n\xi_i f(X_i)\Bigr| \leq \sum_{\ell=1}^{n_1}\mathbb{E}\biggl( \frac{N_\ell^{1/2}}{n^{1/2}} E_\ell \,\mathbbm{1}_{\Omega_0}\biggr) + C \exp\bigl(-m_0^2 n^{1-2/d}/16\bigr),
  \label{Eq:ConditionalExpectation}
 \end{equation}
 where
 \[
  E_\ell :=  \mathbb{E}\biggl\{ \sup_{f\in\mathcal{F}_{d,\downarrow}^+\cap B_\infty(1)\cap B_2(r,P)}\Bigl|\frac{1}{N_\ell^{1/2}}\sum_{i:X_i\in S_\ell}\xi_i f(X_i)\Bigr| \biggm | N_1,\ldots,N_{n_1}\biggr\}.
 \]
 By Lemma~\ref{Lemma:EnvelopeIntegral}, for any $f\in\mathcal{F}_{d,\downarrow}^+\cap B_\infty(1)\cap B_2(r,P)$ and $\ell \in \{1,\ldots,n_1\}$, we have $\int_{I_\ell} f^2\,\mathrm{d}P \leq 7(M_0/m_0)\ell^{-1} r^2 \log^d n =: r_{n,\ell}^2$.  Consequently, we have by Lemma~\ref{Lemma:LocalMaximalInequality} that for any $\eta\in[0,r_{n,\ell}/3)$,
 \begin{equation}
 \label{Eq:E_l}
  E_\ell \lesssim N_\ell^{1/2}\eta + \int_{\eta}^{r_{n,\ell}} H^{1/2}_{[\,]}(\varepsilon)\,\mathrm{d}\varepsilon + \frac{H_{[\,]}(r_{n,\ell})}{N_\ell^{1/2}},
 \end{equation}
 where $H_{[\,]}(\varepsilon) := \log N_{[\,]}\bigl(\varepsilon, \mathcal{F}_{d,\downarrow}^+(I_\ell)\cap B_\infty(1;I_\ell)\cap B_2(r_{n,\ell}, P; I_\ell), \|\cdot\|_{L_2(P;I_\ell)}\bigr)$.  Here, $\|f\|_{L_2(P;I_\ell)}^2 := \int_{I_\ell} f^2 \, dP$, the set $\mathcal{F}_{d,\downarrow}^+(I_\ell)$ is the class of non-negative functions on $I_\ell$ that are block decreasing, $B_\infty(1;I_\ell)$ is the class of functions on $I_\ell$ that are bounded by $1$ and $B_2(r_{n,\ell},P; I_\ell)$ is the class of measurable functions $f$ on $I_\ell$ with $\|f\|_{L_2(P;I_\ell)} \leq r_{n,\ell}$. Any $g\in \mathcal{F}_{d,\downarrow}^+(I_\ell)\cap B_\infty(1;I_\ell)\cap B_2(r_{n,\ell},P; I_\ell)$ can be rescaled into a function $f_g\in\mathcal{F}_{d,\downarrow}^+\cap B_\infty(1)\cap B_2\bigl(n_1^{1/2}(M_0/m_0)^{1/2}r_{n,\ell},P\bigr)$ via the invertible map $f_g(x_1,\ldots,x_{d-1},x_d) := g(x_1,\ldots,x_{d-1},(x_d+\ell-1)/n_1)$. Moreover, $\int_{[0,1]^d} (f_g-f_{g'})^2\,\mathrm{d}P \geq n_1(m_0/M_0)\int_{I_\ell} (g-g')^2\,\mathrm{d}P$. Thus, by Lemma~\ref{Lemma:BracketingEntropyBound}, for $\epsilon \in [\eta,r_{n,\ell}]$,
 \begin{align*}
  H_{[\,]}(\varepsilon) &\leq \log N_{[\,]}\bigl(n^{1/(2d)}(m_0/M_0)^{1/2}\varepsilon, \mathcal{F}_{d,\downarrow}^+\cap B_\infty(1)\cap B_2\bigl(n^{1/(2d)}(M_0/m_0)^{1/2}r_{n,\ell}\bigr), \|\cdot\|_{L_2(P)}\bigr) \\
  &\lesssim_{d,m_0,M_0} \biggl(\frac{r_{n,\ell}}{\varepsilon}\biggr)^{2(d-1)}\log_+^{d^2}(1/\epsilon).
 \end{align*}
 Substituting the above entropy bound into~\eqref{Eq:E_l}, and choosing $\eta = n^{-1/(2d)} r_{n,\ell}$, we obtain
 \[
  E_\ell \lesssim_{d,m_0,M_0} N_\ell^{1/2} \eta + \log^{d^2/2} n \int_{\eta}^{r_{n,\ell}} \biggl(\frac{r_{n,\ell}}{\varepsilon}\biggr)^{d-1}\,\mathrm{d}\varepsilon + \frac{\log^{d^2} n}{N_\ell^{1/2}} \lesssim N_\ell^{1/2}\eta + \frac{r_{n,\ell}^{d-1}\log^{d^2/2} n}{\eta^{d-2}}+\frac{\log^{d^2} n}{N_\ell^{1/2}}.
\]
Hence
\begin{equation}
\label{Eq:E_l2}
  E_\ell \mathbbm{1}_{\Omega_0} \lesssim_{d,m_0,M_0} r_{n,\ell}\, n^{1/2-1/d} \log^{d^2/2} n + n^{-1/2+1/(2d)} \log^{d^2} n \lesssim_{m_0,M_0} r_{n,\ell}\, n^{1/2-1/d} \log^{d^2/2} n,
 \end{equation}
 where in the final inequality we used the conditions that $d\geq 3$ and $r \geq n^{-(1-2/d)}\log^{(d^2-d)/2} n$. Combining~\eqref{Eq:ConditionalExpectation} and~\eqref{Eq:E_l2}, we have that
 \begin{align*}
  \mathbb{E}\sup_{f\in\mathcal{F}_{d,\downarrow}^+\cap B_\infty(1)\cap B_2(r,P)}\biggl|\frac{1}{n^{1/2}}\sum_{i=1}^n\xi_i f(X_i)\biggr| &\lesssim_{d,m_0,M_0} r n^{1/2-1/d}  \log^{(d^2+d)/2} n \, \biggl(\frac{1}{n_1^{1/2}} \sum_{\ell=1}^{n_1} \ell^{-1/2}\biggr)\\
  &\lesssim r n^{1/2-1/d} \log^{(d^2+d)/2} n,
 \end{align*}
 which completes the proof.
\end{proof}

Finally, we need the following proposition to switch between $L_2(P)$ and $L_2(\mathbb{P}_n)$ norms as described in Step 3. 
\begin{prop}
\label{Prop:RatioEmpiricalProcess}
Fix $d\geq 2$ and suppose that $f_0 = 0$. There exists a constant $C_{d,m_0,M_0}>0$, depending only on $d$, $m_0$ and $M_0$, such that  
\[
  \mathbb{E}\bigl\{\bigl\|\hat f_n\bigr\|_{L_2(\mathbb{P}_n)}^2 \mathbbm{1}_{\{\|\hat f_n\|_\infty\leq 4\log^{1/2} n\}}\bigr\} \leq C_{d,m_0,M_0}\Bigl[n^{-2/d}\log^{2\gamma_d}n + \bigl\{\mathbb{E}\bigl\|\hat f_n\bigr\|_{L_2(P)}^2 \mathbbm{1}_{\{\|\hat f_n\|_\infty\leq 4\log^{1/2} n\}}\bigr\}\Bigr].
\]
\end{prop}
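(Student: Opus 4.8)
The plan is to reduce the whole statement to a uniform control of the recentred quadratic process $f\mapsto (\mathbb{P}_n-P)f^2$ over the truncated monotone class. Write $L:=4\log^{1/2}n$, put $\beta:=n^{-1/d}\log^{\gamma_d}n$ (so $\beta^2=n^{-2/d}\log^{2\gamma_d}n$), and set $\mathcal{T}(\rho):=\mathcal{F}_d\cap B_\infty(L)\cap B_2(\rho,P)$. From the pointwise identity $\|\hat f_n\|_{L_2(\mathbb{P}_n)}^2=\|\hat f_n\|_{L_2(P)}^2+(\mathbb{P}_n-P)\hat f_n^2$, it suffices to show $\mathbb{E}\{|(\mathbb{P}_n-P)\hat f_n^2|\,\mathbbm{1}_{\{\|\hat f_n\|_\infty\le L\}}\}\lesssim_{d,m_0,M_0}\beta^2+\mathbb{E}\{\|\hat f_n\|_{L_2(P)}^2\,\mathbbm{1}_{\{\|\hat f_n\|_\infty\le L\}}\}$. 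On the event $\{\|\hat f_n\|_\infty\le L\}$ the (lower semicontinuous extension of the) least squares estimator is block monotone and bounded, hence $\hat f_n\in\mathcal{F}_d\cap B_\infty(L)$ with $\|\hat f_n\|_{L_2(P)}\le L$, so everything is a statement about $\sup_{f\in\mathcal{T}(\rho)}|(\mathbb{P}_n-P)f^2|$ for $\rho$ on a dyadic grid between $\beta$ and $L$.

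The first ingredient is the localized bound $\mathbb{E}\sup_{f\in\mathcal{T}(\rho)}|(\mathbb{P}_n-P)f^2|\lesssim_{d,m_0,M_0}\rho\beta$ for every $\rho\ge\beta$. This follows by symmetrisation (turning $(\mathbb{P}_n-P)f^2$ into a Rademacher average), then the contraction principle of Lemma~\ref{Lemma:Contraction} applied to $t\mapsto t^2$ (which is $2L$-Lipschitz on $[-L,L]$ and vanishes at $0$, reducing the $f^2$-process to the linear $f$-process), then an $L_\infty$-rescaling carrying $B_\infty(L)$ to $B_\infty(1)$ and $B_2(\rho,P)$ to $B_2(\rho/L,P)$, and finally Proposition~\ref{Prop:SymmetrisedProcess} at radius $\rho/L$; its lower-radius hypothesis holds at $\rho=\beta$ precisely because $\gamma_d-\tfrac12\ge (d^2-d)/2$ and $1/d\le\max\{1/d,1-2/d\}$, while the factors of $L$ combine to promote the exponent $\gamma_d-\tfrac12$ to $\gamma_d$. (Alternatively one may split $f=f_+-f_-$ and use that $g\mapsto g^2$ maps $\mathcal{F}_d^+$ into itself, bypassing the contraction step.)

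The core is a sharp concentration/peeling step. For each integer $0\le j\lesssim\log n$ apply Talagrand's inequality (Bousquet's version) to $Z_j:=\sup_{f\in\mathcal{T}(2^j\beta)}|(\mathbb{P}_n-P)f^2|$: the functions $f^2$ are bounded by $L^2=16\log n$ with variance at most $L^2(2^j\beta)^2$, and $\mathbb{E}Z_j\lesssim 2^j\beta^2$ by the previous step, so a constant $K=K_{d,m_0,M_0}$ can be chosen with
\[
\mathbb{P}\bigl(Z_j\ge K2^{2j}\beta^2\bigr)\le\exp\bigl(-c\,2^{2j}\,n\beta^2/\log n\bigr)=\exp\bigl(-c\,2^{2j}\,n^{1-2/d}\log^{2\gamma_d-1}n\bigr).
\]
Since $n^{1-2/d}\log^{2\gamma_d-1}n\ge\log^8 n$ for every $d\ge 2$, a union bound over the $\Theta(\log n)$ relevant values of $j$ gives $\mathbb{P}(\mathcal{A}^c)\le\exp(-c'\log^8 n)$ for $\mathcal{A}:=\bigcap_j\{Z_j< K2^{2j}\beta^2\}$. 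On $\mathcal{A}\cap\{\|\hat f_n\|_\infty\le L\}$, peeling $\|\hat f_n\|_{L_2(P)}$ into the shell $\{\|\hat f_n\|_{L_2(P)}\le\beta\}$ and the shells $\{2^{j-1}\beta<\|\hat f_n\|_{L_2(P)}\le 2^j\beta\}$ and using monotonicity of $\rho\mapsto\sup_{\mathcal{T}(\rho)}$ yields the deterministic bound $|(\mathbb{P}_n-P)\hat f_n^2|\le 4K\|\hat f_n\|_{L_2(P)}^2+K\beta^2$, hence $\|\hat f_n\|_{L_2(\mathbb{P}_n)}^2\le(1+4K)\|\hat f_n\|_{L_2(P)}^2+K\beta^2$ there. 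Taking expectations, the contribution of $\mathcal{A}$ is at most $(1+4K)\mathbb{E}\{\|\hat f_n\|_{L_2(P)}^2\mathbbm{1}\}+K\beta^2$, and on $\mathcal{A}^c$ we bound crudely $\|\hat f_n\|_{L_2(\mathbb{P}_n)}^2\le\|\hat f_n\|_\infty^2\le 16\log n$, contributing at most $16\log n\cdot\exp(-c'\log^8 n)\ll\beta^2$; adding these and recalling $\beta^2=n^{-2/d}\log^{2\gamma_d}n$ finishes the proof.

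The step I expect to be the main obstacle is this concentration/peeling. Because Proposition~\ref{Prop:Reduction} will deliver $\mathbb{E}\{\|\hat f_n\|_{L_2(P)}^2\mathbbm{1}\}\lesssim n^{-2/d}\log^{2\gamma_d}n$ with the \emph{same} exponent $2\gamma_d$, the peeling over $\Theta(\log n)$ dyadic shells cannot be allowed to leak even one extra factor of $\log n$; crude alternatives such as summing expected suprema, or Cauchy--Schwarz of each $Z_j$ against $\mathbb{P}(\|\hat f_n\|_{L_2(P)}>2^{j-1}\beta)$, each lose a $\log n$. This forces us to replace each shell's supremum by its high-probability value, and hence to verify that Talagrand's inequality genuinely produces a tail of order $\exp(-c\log^8 n)$. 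That hinges on the $L_\infty$-truncation keeping the envelope $f^2$ of merely poly-logarithmic size, which keeps the variance proxy $L^2\rho^2/n$ far below $(\mathbb{E}Z_j)^2$, so that the Gaussian (not the Bernstein) regime of the inequality governs, with the favourable rate $n^{1-2/d}\log^{2\gamma_d-1}n\gtrsim\log^8 n$.
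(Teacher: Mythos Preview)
Your argument is correct and follows essentially the same route as the paper: bound the expected localized supremum $\mathbb{E}\sup_{\mathcal{T}(\rho)}|(\mathbb{P}_n-P)f^2|$ by symmetrisation, contraction (Lemma~\ref{Lemma:Contraction}) and Proposition~\ref{Prop:SymmetrisedProcess}, then use Talagrand's inequality together with a dyadic peeling in $\|f\|_{L_2(P)}$ to upgrade this to a high-probability pointwise bound, and finally absorb the exceptional set using the crude $L_\infty$ bound. The only cosmetic difference is that the paper splits into the two regimes $\{\|\hat f_n\|_{L_2(P)}\le r_n\}$ and $\{\|\hat f_n\|_{L_2(P)}>r_n\}$ and, in the latter, phrases the conclusion as a uniform ratio bound $\sup|\mathbb{P}_nf^2/Pf^2-1|\lesssim_{d,m_0,M_0}1$ (choosing $s_\ell=2^\ell\log n$ in Talagrand to get failure probability $\le 2/n^2$), whereas you keep the additive form $|(\mathbb{P}_n-P)\hat f_n^2|\le 4K\|\hat f_n\|_{L_2(P)}^2+K\beta^2$ throughout; the two formulations are equivalent here.
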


\begin{proof}
To simplify notation, we define $\tilde f_n := \hat f_n \mathbbm{1}_{\{\|\hat f_n\|_\infty \leq 4\log^{1/2} n\}}$ and $r_n := n^{-1/d}\log^{\gamma_d} n$. We write
\begin{align}
  \label{Eq:Decomposition}
  \mathbb{E}\bigl\{\bigl\|\hat f_n\bigr\|_{L_2(\mathbb{P}_n)}^2 &\mathbbm{1}_{\{\|\hat f_n\|_\infty\leq 4\log^{1/2} n\}}\bigr\} = \mathbb{E}\bigl\|\tilde f_n\bigr\|_{L_2(\mathbb{P}_n)}^2 \nonumber \\
&\hspace{1cm}= \mathbb{E}\bigl\{\bigl\|\tilde f_n\bigr\|_{L_2(\mathbb{P}_n)}^2 \mathbbm{1}_{\{\|\hat f_n\|_{L_2(P)} \leq r_n\}}\bigr\} + \mathbb{E}\bigl\{\bigl\|\tilde f_n\bigr\|_{L_2(\mathbb{P}_n)}^2 \mathbbm{1}_{\{\|\hat f_n\|_{L_2(P)} > r_n\}}\bigr\},
\end{align}
and control the two terms on the right hand side of~\eqref{Eq:Decomposition} separately. For the first term, we have
\begin{align}
  \mathbb{E}\bigl\{\bigl\|\tilde f_n\bigr\|_{L_2(\mathbb{P}_n)}^2 \mathbbm{1}_{\{\|\hat f_n\|_{L_2(P)} \leq r_n\}}\bigr\}
  &\leq \mathbb{E}\sup_{f\in\mathcal{F}_d\cap B_\infty(4\log^{1/2} n)\cap B_2(r_n,P)}\frac{1}{n}\sum_{i=1}^n f^2(X_i)\nonumber\\
  &\lesssim r_n^2 + \frac{1}{n}\mathbb{E}\sup_{f\in\mathcal{F}_d\cap B_\infty(4\log^{1/2} n)\cap B_2(r_n,P)} \biggl|\sum_{i=1}^n \xi_i f^2(X_i)\biggr|\nonumber\\
  &\lesssim r_n^2 + \frac{\log^{1/2} n}{n} \mathbb{E}\sup_{f\in\mathcal{F}_d\cap B_\infty(4\log^{1/2} n)\cap B_2(r_n,P)} \biggl|\sum_{i=1}^n \xi_i f(X_i)\biggr| \nonumber\\
  &\lesssim_{d,m_0,M_0} r_n^2 +  r_n n^{-1/d} \log^{\gamma_d} n \lesssim r_n^2,\label{Eq:SymmetrisedProcess}
\end{align}
where the second line uses the symmetrisation inequality \citep[cf.][Lemma~2.3.1]{vanderVaartWellner1996}, the third inequality follows from Lemma~\ref{Lemma:Contraction} and the penultimate inequality follows from applying Proposition~\ref{Prop:SymmetrisedProcess} to $f/(4\log^{1/2} n)$. For the second term on the right-hand side of~\eqref{Eq:Decomposition}, we first claim that there exists some constant $C'_{d,m_0,M_0} > 0$, depending only on $d, m_0$ and $M_0$, such that
\begin{equation}
  \label{Eq:RatioEmpiricalProcess}
  \mathbb{P}\biggl(\sup_{f\in\mathcal{F}_d\cap B_\infty(4\log^{1/2} n)\cap B_2(r_n,P)^c} \biggl|\frac{\mathbb{P}_n f^2}{P f^2} - 1\biggr| > C'_{d,m_0,M_0}\biggr)\leq \frac{2}{n^2}.    
\end{equation}
To see this, we adopt a peeling argument as follows. Let $\mathcal{F}_{d,\ell} := \{f\in\mathcal{F}_d\cap B_\infty(4\log^{1/2} n): 2^{\ell-1} r_n^2 < P f^2 \leq 2^\ell r_n^2\}$ and $m$ be the largest integer such that $2^mr_n^2< 32\log n$ (so that $m \asymp \log n$).  We have that
\begin{align*}
  \sup_{f\in\mathcal{F}_d\cap B_\infty(4\log^{1/2} n)\cap B_2(r_n,P)^c} \biggl|\frac{\mathbb{P}_n f^2}{P f^2} - 1\biggr| &= \frac{1}{n^{1/2}} \sup_{f\in\mathcal{F}_d\cap B_\infty(4\log^{1/2} n)\cap B_2(r_n,P)^c} \frac{|\mathbb{G}_n f^2|}{Pf^2} \\
  &\lesssim \frac{1}{n^{1/2}} \max_{\ell = 1,\ldots,m} \Bigl\{(2^\ell r_n^2)^{-1} \sup_{f\in\mathcal{F}_{d,\ell}} |\mathbb{G}_n f^2|\Bigr\}.
\end{align*}
By Talagrand's concentration inequality (cf. \cite{Talagrand1996}) for empirical processes, in the form given by \citet[Theorem~3]{Massart2000}, applied to the class $\{ f^2: f\in \mathcal{F}_{d,\ell}\}$, we have that for any $s_\ell > 0$,
\[
  \mathbb{P}\biggl\{\sup_{f\in\mathcal{F}_{d,\ell}} |\mathbb{G}_n f^2| > 2\mathbb{E}\sup_{f\in\mathcal{F}_{d,\ell}}|\mathbb{G}_n f^2| + 2^{(\ell+7)/2} r_n s_\ell^{1/2} \log^{1/2} n + \frac{552 s_\ell \log n}{n^{1/2}}\biggr\}\leq e^{-s_\ell}.
\]
Here we have used the fact that $\sup_{f\in\mathcal{F}_{d,\ell}}\mathrm{Var}_P f^2 \leq \sup_{f\in\mathcal{F}_{d,\ell}} P f^2 \|f\|_\infty^2 \leq 2^{\ell+4} r_n^2 \log n$. Further, we note by the symmetrisation inequality again, Lemma~\ref{Lemma:Contraction} and Proposition~\ref{Prop:SymmetrisedProcess} that
\begin{align*}
  \mathbb{E}\sup_{f\in\mathcal{F}_{d,\ell}} |\mathbb{G}_n f^2| &\lesssim \frac{1}{n^{1/2}}\mathbb{E}\sup_{f\in\mathcal{F}_{d,\ell}} \biggl|\sum_{i=1}^n \xi_i f^2(X_i)\biggr|\lesssim \frac{\log^{1/2} n}{n^{1/2}} \mathbb{E}\sup_{f\in\mathcal{F}_{d,\ell}}\biggl|\sum_{i=1}^n \xi_i f(X_i)\biggr| \\
  &\lesssim_{d,m_0,M_0} 2^{\ell/2} r_n n^{1/2-1/d} \log^{\gamma_d} n .
\end{align*}
By a union bound, we have that with probability at least $1 - \sum_{\ell=1}^m e^{-s_\ell}$,
\begin{align*}
  \sup_{f\in\mathcal{F}_d\cap B_\infty(4\log^{1/2} n)\cap B_2(r_n,P)^c} \biggl|&\frac{\mathbb{P}_nf^2}{Pf^2} - 1\biggr| \\
&\lesssim_{d,m_0,M_0} \max_{\ell = 1,\ldots,m}\biggl\{\frac{n^{1/2-1/d}\log^{\gamma_d} n + s_\ell^{1/2} \log^{1/2}n }{2^{\ell/2} n^{1/2} r_n} + \frac{s_\ell \log n}{2^\ell n r_n^2}\biggr\}.
\end{align*}
By choosing $s_\ell := 2^{\ell} \log n$, we see that $\sum_{\ell=1}^m e^{-s_\ell} \leq \sum_{\ell=1}^\infty n^{-\ell-1} \leq 2n^{-2}$ and 
\[
  \sup_{f\in\mathcal{F}_d\cap B_\infty(4\log^{1/2} n)\cap B_2(r_n,P)^c} \biggl|\frac{\mathbb{P}_nf^2}{Pf^2} - 1\biggr| \lesssim_{d,m_0,M_0} 1,
\]
which verifies~\eqref{Eq:RatioEmpiricalProcess}.  Now let $\mathcal{E} := \bigl\{\sup_{f\in\mathcal{F}_d\cap B_\infty(4\log^{1/2} n)\cap B_2(r_n,P)^c} \bigl|\frac{\mathbb{P}_n f^2}{P f^2}-1\bigr|\leq C'_{d,m_0,M_0}\bigr\}$.  Then
\begin{align}
  \mathbb{E}\bigl\{\bigl\|\tilde f_n\bigr\|_{L_2(\mathbb{P}_n)}^2 \mathbbm{1}_{\{\|\hat f_n\|_{L_2(P)} > r_n\}}\bigr\} &\leq \mathbb{E}\bigl\{\bigl\|\tilde f_n\bigr\|_{L_2(\mathbb{P}_n)}^2 \mathbbm{1}_{\{\|\hat f_n\|_{L_2(P)} > r_n\}}\mathbbm{1}_\mathcal{E}\bigr\} + \frac{32\log n}{n^2}\nonumber\\
  & \leq (C'_{d,m_0,M_0}+1)\mathbb{E}\bigl\|\tilde f_n\bigr\|_{L_2(P)}^2 + \frac{32\log n}{n^2}.\label{Eq:secondterm}
\end{align}
Combining~\eqref{Eq:Decomposition},~\eqref{Eq:SymmetrisedProcess} and~\eqref{Eq:secondterm}, we obtain
\[
  \mathbb{E}\bigl\|\tilde f_n\bigr\|_{L_2(\mathbb{P}_n)}^2 \lesssim_{d,m_0,M_0} r_n^2 + \mathbb{E}\bigl\|\tilde f_n\bigr\|_{L_2(P)}^2,
\]
as desired. 
\end{proof}

\begin{proof}[Proof of Proposition~\ref{Prop:RandomAdaptUpper}]
For $f_0 = 0$, we decompose
\begin{equation}
  \label{Eq:TwoTerms}
  R(\hat{f}_n,0) = \mathbb{E}\bigl\{\bigl\|\hat f_n\bigr\|_{L_2(\mathbb{P}_n)}^2 \mathbbm{1}_{\{\|\hat f_n\|_\infty\leq 4\log^{1/2} n\}}\bigr\} + \mathbb{E}\bigl\{\bigl\|\hat f_n\bigr\|_{L_2(\mathbb{P}_n)}^2 \mathbbm{1}_{\{\|\hat f_n\|_\infty> 4\log^{1/2} n\}}\bigr\}
\end{equation}
and handle the two terms on the right-hand side separately. For the first term, let $r_n := n^{-1/d}\log^{\gamma_d} n$ and observe that by Lemma~\ref{Lemma:Conversion} and Proposition~\ref{Prop:SymmetrisedProcess}, we have that for $r \geq r_n$,
\[
  \mathbb{E}\sup_{f\in\mathcal{F}_d\cap B_\infty(4\log^{1/2} n)\cap B_2(r,P)}\biggl|\frac{1}{n^{1/2}}\sum_{i=1}^n \epsilon_i f(X_i)\biggr| \lesssim_{d,m_0,M_0} r n^{1/2-1/d} \log^{\gamma_d} n.
\]
On the other hand, by Lemma~\ref{Lemma:Contraction} and Proposition~\ref{Prop:SymmetrisedProcess}, for $r \geq r_n$,
\[
  \mathbb{E}\sup_{f\in\mathcal{F}_d\cap B_\infty(4\log^{1/2} n)\cap B_2(r,P)}\biggl|\frac{1}{n^{1/2}}\sum_{i=1}^n \xi_i f^2(X_i)\biggr| \lesssim_{d,m_0,M_0} r n^{1/2-1/d} \log^{\gamma_d} n.
\]
It follows that the conditions of Proposition~\ref{Prop:Reduction} are satisfied for this choice of $r_n$ with $\phi_n(r) := r n^{1/2-1/d} \log^{\gamma_d} n$.  By Propositions~\ref{Prop:RatioEmpiricalProcess} and~\ref{Prop:Reduction}, we deduce that
\begin{align}
\label{Eq:FirstTerm}
  \mathbb{E} \bigl\{\bigl\|\hat f_n\bigr\|_{L_2(\mathbb{P}_n)}^2  \mathbbm{1}_{\{\|\hat f_n\|_\infty\leq 4\log^{1/2} n\}}\bigr\} &\lesssim_{d,m_0,M_0} n^{-2/d}\log^{2\gamma_d}n + \mathbb{E} \bigl\{\bigl\|\hat f_n\bigr\|_{L_2(P)}^2  \mathbbm{1}_{\{\|\hat f_n\|_\infty\leq 4\log^{1/2} n\}}\bigr\} \nonumber \\
&\lesssim_{d,m_0,M_0} n^{-2/d}\log^{2\gamma_d} n.
\end{align}
For the second term on the right-hand side of~\eqref{Eq:TwoTerms}, we note that by the definition of the least squares estimator, $\sum_{i=1}^n \{\hat f_n(X_i) - \epsilon_i\}^2 \leq \sum_{i=1}^n \epsilon_i^2$, so
\[
  \bigl\|\hat f_n\bigr\|_{L_2(\mathbb{P}_n)}^2 \leq \frac{2}{n}\sum_{i=1}^n \{\hat f_n(X_i) - \epsilon_i\}^2 + \frac{2}{n}\sum_{i=1}^n \epsilon_i^2 \leq \frac{4}{n}\sum_{i=1}^n \epsilon_i^2.
\]
Thus, 
\begin{align}
  \mathbb{E} \bigl\{\bigl\|\hat f_n\bigr\|_{L_2(\mathbb{P}_n)}^2  \mathbbm{1}_{\{\|\hat f_n\|_\infty> 4\log^{1/2} n\}}\bigr\} &\leq 4\mathbb{E} \bigl\{\epsilon_1^2  \mathbbm{1}_{\{\|\hat f_n\|_\infty> 4\log^{1/2} n\}}\bigr\}\nonumber\\
  &\lesssim \mathbb{P}(\|\hat f_n\|_\infty > 4\log^{1/2} n)^{1/2} \lesssim  n^{-3},\label{Eq:SecondTerm}
\end{align}
where the final inequality follows from Lemma~\ref{Lemma:Linfty}. The proof is completed by substituting~\eqref{Eq:FirstTerm} and~\eqref{Eq:SecondTerm} into~\eqref{Eq:TwoTerms}.
\end{proof}

\section{Appendix: proofs of ancillary results}
\label{Sec:Appendix}
The proof of Corollary~\ref{Cor:Fixed} requires the following lemma on Riemann approximation of block increasing functions. 

\begin{lemma}
\label{Lemma:RiemannApproximation}
Suppose $n_1 = n^{1/d}$ is a positive integer. For any $f \in \mathcal{F}_d$, define $f_L(x_1,\ldots,x_d) := f\bigl(n_1^{-1}\lfloor n_1 x_1\rfloor, \ldots, n_1^{-1}\lfloor n_1 x_d\rfloor \bigr)$ and $f_U(x_1,\ldots,x_d) := f\bigl(n_1^{-1}\lceil n_1 x_1\rceil, \ldots, n_1^{-1}\lceil n_1 x_d\rceil \bigr)$. Then
 \[
  \int_{[0,1]^d} (f_U - f_L)^2 \leq 4d n^{-1/d}\|f\|_{\infty}^2.
 \]
\end{lemma}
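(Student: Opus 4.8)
The plan is to reduce the $L_2$ estimate to an $L_1$ estimate and then exploit a telescoping structure along the $d$ coordinate directions. Write $\lfloor x\rfloor := (n_1^{-1}\lfloor n_1 x_1\rfloor,\ldots,n_1^{-1}\lfloor n_1 x_d\rfloor)$ and $\lceil x\rceil := (n_1^{-1}\lceil n_1 x_1\rceil,\ldots,n_1^{-1}\lceil n_1 x_d\rceil)$, so that $f_L = f\circ\lfloor\cdot\rfloor$ and $f_U = f\circ\lceil\cdot\rceil$ on $[0,1]^d$. Since $f\in\mathcal{F}_d$ is increasing in each coordinate and $\lceil n_1 x_j\rceil\geq\lfloor n_1 x_j\rfloor$ for every $j$, we have $0\leq f_U(x)-f_L(x)\leq 2\|f\|_\infty$ for all $x\in[0,1]^d$, whence $(f_U-f_L)^2\leq 2\|f\|_\infty(f_U-f_L)$ pointwise. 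It therefore suffices to prove the $L_1$ bound $\int_{[0,1]^d}(f_U-f_L)\leq 2d\,n^{-1/d}\|f\|_\infty$, which combined with the above yields $\int_{[0,1]^d}(f_U-f_L)^2\leq 4d\,n^{-1/d}\|f\|_\infty^2$, as required.

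For the $L_1$ bound, the idea is to telescope $f_U(x)-f_L(x)$ across the coordinates. For $j=0,1,\ldots,d$, let $z^{(j)}(x)$ be the point whose first $j$ coordinates are those of $\lceil x\rceil$ and whose last $d-j$ coordinates are those of $\lfloor x\rfloor$, so that $z^{(0)}(x)=\lfloor x\rfloor$ and $z^{(d)}(x)=\lceil x\rceil$. Then $f_U(x)-f_L(x)=\sum_{j=1}^d g_j(x)$, where $g_j(x):=f(z^{(j)}(x))-f(z^{(j-1)}(x))\geq 0$ by monotonicity, and each $g_j$ is a difference of $f$ at two points that agree in every coordinate except the $j$th, where they take the values $n_1^{-1}\lceil n_1 x_j\rceil$ and $n_1^{-1}\lfloor n_1 x_j\rfloor$ respectively.

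Next, fix $j$ and integrate $g_j$ first in the variable $x_j$ by Fubini, holding the remaining coordinates fixed; the key observation is that the rounded coordinates appearing in $g_j$ other than the $j$th do not depend on $x_j$. On each subinterval $x_j\in((m-1)/n_1,m/n_1)$, for $m=1,\ldots,n_1$ (the grid points themselves forming a Lebesgue-null set that may be ignored), $g_j(x)$ equals $f$ evaluated with $j$th coordinate $m/n_1$ minus $f$ evaluated with $j$th coordinate $(m-1)/n_1$, all other (rounded) coordinates held fixed. Each such interval has length $n_1^{-1}$, so summing over $m$ telescopes and gives $\int_0^1 g_j\,dx_j = n_1^{-1}\bigl(f(\ldots,1,\ldots)-f(\ldots,0,\ldots)\bigr)\leq 2n_1^{-1}\|f\|_\infty = 2n^{-1/d}\|f\|_\infty$, uniformly in the remaining $d-1$ coordinates. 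Integrating over those coordinates (a set of unit Lebesgue measure) yields $\int_{[0,1]^d}g_j\leq 2n^{-1/d}\|f\|_\infty$, and summing over $j=1,\ldots,d$ gives the desired $L_1$ bound.

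There is essentially no serious obstacle in this argument: the telescoping in the $j$th coordinate is exact, and the only point requiring a little care is the bookkeeping of the floor/ceiling conventions at grid points — but since these points form a null set, they do not affect any of the integrals involved.
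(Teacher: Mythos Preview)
Your proof is correct, but it takes a genuinely different route from the paper's argument. Both proofs begin with the same $L_2$-to-$L_1$ reduction via $(f_U-f_L)^2\leq 2\|f\|_\infty(f_U-f_L)$, but then diverge in how they control the resulting $L_1$ integral.

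You telescope \emph{across the $d$ coordinate directions}: writing $f_U-f_L=\sum_{j=1}^d g_j$, where each $g_j$ flips only the $j$th coordinate from floor to ceiling, you integrate each $g_j$ in its own variable to obtain a one-dimensional telescoping sum, yielding a factor $n_1^{-1}$ per direction and hence $2d\,n^{-1/d}\|f\|_\infty$ after summing over $j$. The paper instead partitions the lattice $\mathbb{L}_{d,n}$ into \emph{diagonal chains} (equivalence classes under $x_j-x_1=\text{const}$), observes that there are at most $dn^{1-1/d}$ such chains, and telescopes $f(x/n_1)-f\bigl((x-\mathbf{1})/n_1\bigr)$ along each chain to bound the sum by $2\|f\|_\infty$ per chain. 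Your approach is arguably more elementary, avoiding the combinatorial count of diagonals; the paper's approach, on the other hand, makes the chain structure of the partial order explicit and may generalise more readily to other partially ordered index sets. Both lead to the same constant $4d$.
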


\begin{proof}
For $x = (x_1,\ldots,x_d)^\top$ and $x' = (x'_1,\ldots,x'_d)^\top$ in $\mathbb{L}_{d,n}$, we say $x$ and $x'$ are equivalent if and only if $x_j - x_1 = x'_j - x'_1$ for $j = 1,\ldots,d$. Let $\mathbb{L}_{d,n} = \bigsqcup_{r=1}^N P_r$ be the partition of $\mathbb{L}_{d,n}$ into equivalence classes. Since each $P_r$ has non-empty intersection with a different element of the set $\{(x_1,\ldots,x_d)\in\mathbb{L}_{d,n}: \min_j x_j = 1\}$, we must have $N\leq d n^{1-1/d}$. Therefore, we have
 \begin{align*}
  \int_{[0,1]^d} (f_U - f_L)^2 & = \sum_{r=1}^N \int_{n_1^{-1}(P_r + (-1,0]^d)} (f_U - f_L)^2 \\
  & \leq \frac{2}{n}\|f\|_\infty \sum_{r=1}^N \sum_{x = (x_1,\ldots,x_d)^{\top} \in P_r} \biggl\{f\biggl(\frac{x_1}{n},\ldots,\frac{x_d}{n}\biggr) - f\biggl(\frac{x_1-1}{n},\ldots,\frac{x_d-1}{n}\biggr)\biggr\} \\
  & \leq \frac{2N}{n}\|f\|_\infty \bigl(f(1,\ldots,1) - f(0,\ldots,0)\bigr) \leq 4d n^{-1/d}\|f\|_\infty^2,
 \end{align*} 
 as desired.
\end{proof}

The following is a simple generalisation of Jensen's inequality. 
\begin{lemma}
  \label{Lemma:Jensen+}
  Suppose $h: [0,\infty) \to (0,\infty)$ is a non-decreasing function satisfying the following:
  \begin{enumerate}[label={\textup{(\roman*)}}, noitemsep]
    \item There exists $x_0\geq 0$ such that $h$ is concave on $[x_0,\infty)$.
    \item There exists some $x_1>x_0$ such that $h(x_1)-x h'_+(x_1) \geq h(x_0)$, where $h'_+$ is the right derivative of $h$.
  \end{enumerate}
  Then there exists a constant $C_h > 0$ depending only on $h$ such that for any nonnegative random variable $X$ with $\mathbb{E} X <\infty$, we have 
  \[
    \mathbb{E}h(X) \leq C_h h(\mathbb{E}X).
  \]
\end{lemma}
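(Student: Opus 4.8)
The plan is to reduce everything to the ordinary Jensen inequality by replacing $h$ with an explicit concave majorant defined on all of $[0,\infty)$. Let $\ell(x) := h(x_1) + h'_+(x_1)(x-x_1)$ be the tangent line to $h$ at the point $x_1$ furnished by hypothesis~(ii), and set
\[
  \tilde h(x) := \begin{cases} \ell(x), & 0 \le x \le x_1,\\ h(x), & x > x_1. \end{cases}
\]
Since $h$ is non-decreasing, all of its one-sided derivatives are non-negative, so $\ell$ has non-negative slope; together with the concavity of $h$ on $[x_0,\infty)$ and the fact that the left and right derivatives of $\tilde h$ at $x_1$ both equal $h'_+(x_1)$, this shows that $\tilde h$ is concave and non-decreasing on $[0,\infty)$.

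The key step is the pointwise domination $h \le \tilde h$ on $[0,\infty)$. On $(x_1,\infty)$ the two functions agree; on $[x_0,x_1]$ the bound $h \le \ell = \tilde h$ is just the tangent-line estimate for the concave function $h|_{[x_0,\infty)}$; and on $[0,x_0)$ I would combine $h(x) \le h(x_0)$ (monotonicity of $h$) with $\tilde h(x) = \ell(x) \ge \ell(0) = h(x_1) - x_1 h'_+(x_1) \ge h(x_0)$, the last inequality being exactly hypothesis~(ii). This is the one place the structural hypotheses are genuinely used; for the two functions $x\mapsto \log_+^8 x$ and $x\mapsto x^{1-2/d}\log_+^{2\gamma_d} x$ that arise in the applications, checking (i) and (ii) is a short computation.

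Granting $h \le \tilde h$, concavity of $\tilde h$ also yields the linear upper bound $h(x) \le \tilde h(x) \le h'_+(x_1)x + \ell(0)$, so $\mathbb{E}h(X)$ is finite whenever $\mathbb{E}X < \infty$, and Jensen's inequality for the concave function $\tilde h$ gives $\mathbb{E}h(X) \le \mathbb{E}\tilde h(X) \le \tilde h(\mathbb{E}X)$. It then remains to compare $\tilde h(\mathbb{E}X)$ with $h(\mathbb{E}X)$: if $\mathbb{E}X > x_1$ then $\tilde h(\mathbb{E}X) = h(\mathbb{E}X)$, while if $\mathbb{E}X \le x_1$ then $\tilde h(\mathbb{E}X) \le \tilde h(x_1) = h(x_1) \le \frac{h(x_1)}{h(0)}\,h(\mathbb{E}X)$, using that $\tilde h$ and $h$ are non-decreasing and $h(0) > 0$. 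Hence the conclusion holds with $C_h := h(x_1)/h(0) \ge 1$. The only step needing care is the verification of $h \le \tilde h$ below $x_0$, which is precisely what conditions~(i) and~(ii) are calibrated to deliver; the remaining manipulations are routine.
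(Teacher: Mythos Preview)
Your proof is correct and follows essentially the same approach as the paper: construct the concave majorant $\tilde h$ by extending the tangent line at $x_1$ over $[0,x_1]$, apply Jensen's inequality to $\tilde h$, and compare back to $h$ using the ratio $h(x_1)/h(0)$. You give more detail than the paper in verifying that $\tilde h$ is indeed a concave majorant of $h$ (the paper simply asserts it), and you do the final comparison $\tilde h(\mathbb{E}X)\le C_h h(\mathbb{E}X)$ by splitting on whether $\mathbb{E}X\le x_1$, whereas the paper bounds $H\le (h(x_1)/h(0))h$ pointwise, but the argument and the resulting constant are the same.
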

\begin{proof}
Define $H:[0,\infty) \rightarrow [h(0),\infty)$ by
\[
H(x) := \begin{cases} h(x_1) - x_1h_+'(x_1) + xh_+'(x_1) & \text{if $x \in [0,x_1)$} \\
h(x) & \text{if $x \in [x_1,\infty)$.} \end{cases}
\]
Then $H$ is a concave majorant of $h$.  Moreover, we have $H \leq (h(x_1)/h(0)) h$. Hence, by Jensen's inequality, we have
  \[
    \mathbb{E} h(X) \leq \mathbb{E} H(X) \leq H(\mathbb{E}X) \leq \frac{h(x_1)}{h(0)} h(\mathbb{E}X),
  \]
  as desired.
\end{proof}

We need the following lower bound on the metric entropy of $\mathcal{M}(\mathbb{L}_{2,n})\cap B_2(1)$ for the proof of Proposition~\ref{Prop:FixedAdaptLower}.
\begin{lemma}
\label{Lemma:MetricEntropyLowerBound}
 There exist universal constants $c > 0$ and $\varepsilon_0 > 0$ such that
 \[
  \log N\bigl(\varepsilon_0, \mathcal{M}(\mathbb{L}_{2,n})\cap B_2(1), \|\cdot\|_2\bigr) \geq c \log^2 n.
 \]
\end{lemma}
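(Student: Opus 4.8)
The plan is to bound the covering number from below by exhibiting a large packing: it suffices to produce a universal $\varepsilon_0>0$ together with vectors $\theta^{(1)},\dots,\theta^{(M)}\in\mathcal{M}(\mathbb{L}_{2,n})\cap B_2(1)$ that are pairwise at Euclidean distance at least $\varepsilon_0$ and with $\log M\ge c\log^2 n$, since then any $\varepsilon_0$-cover must contain at least $M$ balls and $N\bigl(\varepsilon_0,\mathcal{M}(\mathbb{L}_{2,n})\cap B_2(1),\|\cdot\|_2\bigr)\ge M$. The target $\log^2 n\asymp(\log n_1)^2$, where $n_1=n^{1/2}$, signals that one should extract of order $\log n_1$ independent binary choices at each of order $\log n_1$ different dyadic scales, while keeping the total squared Euclidean norm at most $1$.

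Concretely, I would work with the anti-diagonals $A_\ell=\{(i,j)\in\mathbb{L}_{2,n}:i+j=\ell\}$, which are antichains, and with vectors that are constant on each $A_\ell$ and nondecreasing in $\ell$; any such vector lies in $\mathcal{M}(\mathbb{L}_{2,n})$, is encoded by a nondecreasing profile $\ell\mapsto g(\ell)$, and has squared norm $\sum_\ell|A_\ell|\,g(\ell)^2$, with $|A_\ell|$ as large as $n_1$ near $\ell=n_1$. The family is then built by letting the profile $g$ carry, at $L\asymp\log n_1\asymp\log n$ nested dyadic scales (bands of anti-diagonals of geometrically increasing width), an independent one-dimensional monotone-staircase perturbation assembled from $\asymp L$ binary increment choices, the increment size at scale $j$ being tuned so that each scale contributes a comparable share of the norm budget. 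A standard two-level Gilbert--Varshamov selection — first forcing two codewords to differ on a constant fraction of the $L$ scales, then within each such scale on a constant fraction of the increments — leaves $M=\exp(c\,L^2)=\exp(c\log^2 n)$ vectors, pairwise separated by a fixed constant.

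The main obstacle is making this balancing work at a \emph{fixed} scale. A naive multiscale scheme loses logarithmic factors, because the monotone value-bands attached to the $L$ scales must be totally ordered, hence nested, hence of width only $O\bigl(L^{-1}n^{-1/2}\bigr)$ each, so that the separation contributed per scale falls short by a factor of order $L$; this is why one-dimensional reasoning gives only $\log n$, or even $\log\log n$, rather than $\log^2 n$. What rescues the true rate is that the anti-diagonals near level $n_1$ contain $\asymp n_1$ sites, so a perturbation localised there is cheap in norm yet spread over many coordinates: one must arrange the $L$ nested perturbations so that each sits on a band of anti-diagonals whose typical length is matched to its scale, and then verify that \emph{every} pair of codewords, not merely a typical one, remains $\varepsilon_0$-separated. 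The delicate part of the argument is precisely this cross-scale budget accounting together with the verification of uniform separation after the Gilbert--Varshamov pruning; everything else is routine.
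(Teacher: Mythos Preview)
Your proposal has a genuine gap: restricting to vectors that are constant on anti-diagonals collapses the problem to a one-dimensional monotone class, and such a class cannot deliver a $\log^2 n$ packing at a \emph{fixed} scale. You in fact diagnose the obstruction yourself, but the ``rescue'' you describe does not work. Carry out the budget arithmetic in your outline: with $L\asymp\log n$ bands and $L$ binary steps per band, monotonicity of the profile $g$ forces the value ranges of the $L$ bands to be totally ordered and hence nested, so the step size in band $j$ is at most $R_j/L$ with $R_j\asymp 1/(2^j\sqrt{L})$; each flipped bit then moves $\asymp 2^j/L$ coordinates (each weighted by $|A_\ell|\asymp 2^j$) by at most $R_j/L$, contributing $\asymp 4^j L^{-1}\cdot(2^j\sqrt{L}\,L)^{-2}=L^{-4}$ to the squared distance. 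After Gilbert--Varshamov on $L^2$ bits you get squared separation $\asymp L^{-2}$, i.e.\ $\varepsilon_0\asymp 1/\log n\to 0$, not a universal constant. The varying sizes $|A_\ell|$ merely reweight a one-dimensional monotone problem and cannot manufacture the missing factor.

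What is missing is a genuinely two-dimensional dyadic decomposition. The paper partitions $\{1,\dots,n_1\}$ into dyadic blocks $I_r=\{2^{r-1},\dots,2^r-1\}$, $r=1,\dots,\ell$ with $\ell\asymp\log n$, and works with vectors that are constant on each rectangle $I_r\times I_s$, taking one of two values $-(2^{r+s+1})^{-1/2}/\log n$ or $-(2^{r+s})^{-1/2}/\log n$. Monotonicity holds automatically because whenever $(r,s)\prec(r',s')$ componentwise with $(r,s)\neq(r',s')$ one has $r+s+1\le r'+s'$; the crucial point is that rectangles with the same $r+s$ but different $(r,s)$ are \emph{incomparable} in the partial order, so their value ranges need not be nested. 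This yields $\ell^2$ rectangles, each carrying an independent bit whose flip changes the squared norm by $\asymp 1/\log^2 n$, and a single Gilbert--Varshamov step on $\{0,1\}^{\ell^2}$ gives the $\varepsilon_0$-packing of size $\exp(c\log^2 n)$. Your anti-diagonal bands, being linearly ordered, cannot reproduce this incomparability, and that is precisely why the construction falls a full logarithmic factor short.
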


\begin{proof}
It suffices to prove the equivalent result that there exist universal constants $c, \varepsilon_0 > 0$ such that the packing number $D\bigl(\varepsilon_0,\mathcal{M}(\mathbb{L}_{2,n})\cap B_2(1), \|\cdot\|_2\bigr)$ (i.e.\ the maximum number of disjoint open Euclidean balls of radius $\varepsilon_0$ that can be fitted into $\mathcal{M}(\mathbb{L}_{2,n})\cap B_2(1)$) is at least $\exp(c\log^2 n)$. Without loss of generality, we may also assume that $n_1: = n^{1/2} = 2^{\ell}-1$ for some $\ell \in \mathbb{N}$, so that $\ell \asymp \log n$. Now, for $r=1,\ldots,\ell$, let $I_r := \{2^{r-1},\ldots,2^r-1\}$ and consider the set
 \[
  \bar{\mathcal{M}} := \biggl\{\theta\in\mathbb{R}^{\mathbb{L}_{2,n}} : \theta_{I_r\times I_s} \in \Bigl\{\frac{-\mathbf{1}_{I_r\times I_s}}{\sqrt{2^{r+s+1}}\log n}, \frac{-\mathbf{1}_{I_r\times I_s}}{\sqrt{2^{r+s}}\log n} \Bigr\}\biggr\} \subseteq \mathcal{M}(\mathbb{L}_{2,n})\cap B_2(1),
 \]
 where $\mathbf{1}_{I_r\times I_s}$ denotes the all-one vector on $I_r\times I_s$. Define a bijection $\psi: \bar{\mathcal{M}} \to \{0,1\}^{\ell^2}$ by
 \[
  \psi(\theta) := \Bigl( \mathbbm{1}_{\bigl\{\theta_{I_r\times I_s}= - \mathbf{1}_{I_r\times I_s}/\sqrt{2^{r+s+1}}\log n\bigr\}}\Bigr)_{r,s=1}^\ell.
 \]
Then, for $\theta, \theta' \in \bar{\mathcal{M}}$,
 \[
  \|\theta-\theta'\|_2^2 = \frac{d_{\mathrm{H}}(\psi(\theta), \psi(\theta'))}{\log^2 n}\frac{1}{4}\biggl(1 - \frac{1}{2^{1/2}}\biggr)^2,
 \]
where $d_{\mathrm{H}}(\cdot,\cdot)$ denotes the Hamming distance. On the other hand, the Gilbert--Varshamov bound \citep[e.g.][Lemma~4.7]{Massart2007} entails that there exists a subset $\mathcal{I} \subseteq \{0,1\}^{\ell^2}$ such that $|\mathcal{I}| \geq \exp(\ell^2/8)$ and $d_{\mathrm{H}}(v,v')\geq \ell^2/4$ for any distinct $v, v'\in\mathcal{I}$. Then the set $\psi^{-1}(\mathcal{I}) \subseteq \bar{\mathcal{M}}$ has cardinality at least $\exp(\ell^2/8) \geq \exp(\log^2 n/32)$, and each pair of distinct elements have squared $\ell_2$ distance at least $\varepsilon_0 := \frac{\ell^2/4}{\log^2 n}\frac{1}{4}(1 - \frac{1}{2^{1/2}})^2 \gtrsim 1$, as desired.
\end{proof}

Lemma~\ref{Lemma:RandomAntichain} below gives a lower bound on the size of the maximal antichain (with respect to the natural partial ordering on $\mathbb{R}^d$) among independent and identically distributed $X_1,\ldots,X_n$.
\begin{lemma}
\label{Lemma:RandomAntichain}
  Let $d\geq 2$.  Let $X_1,\ldots,X_n \stackrel{\mathrm{iid}}{\sim}  P$, where $P$ is a distribution on $[0,1]^d$ with Lebesgue density bounded above by $M_0\in [1,\infty)$.  Then with probability at least $1- e^{-ed^{-1} (M_0 n)^{1/d}\log (M_0 n)}$, there is an antichain in $G_X$ with cardinality at least $n^{1-1/d}/(2eM_0^{1/d})$.
\end{lemma}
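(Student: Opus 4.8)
The plan is to bound the length of the longest chain in $G_X$ and then invoke Mirsky's theorem. Since the edge relation of $G_X$ is already transitive, a chain in $G_X$ corresponds precisely to a subset of indices $\{i_1,\dots,i_k\}$ for which $X_{i_1},\dots,X_{i_k}$ are totally ordered by $\preceq$; write $L$ for the maximal size of such a subset. By Mirsky's theorem — equivalently, by repeatedly deleting the set of maximal elements of $G_X$, which partitions $\{1,\dots,n\}$ into $L$ antichains — some antichain of $G_X$ has cardinality at least $n/L$, and a maximum antichain is in particular maximal. Hence it suffices to show that, with the stated probability, $L \leq 2eM_0^{1/d}n^{1/d}$, since then the antichain produced has size at least $n/L \geq n^{1-1/d}/(2eM_0^{1/d})$.

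To control $L$ I would use a first-moment/union bound over candidate chains. A fixed $k$-element index set forms a chain in $G_X$ if and only if some permutation of the corresponding $X_i$'s is $\preceq$-increasing, so by a union bound over the $k!$ orderings, $\mathbb{P}(\text{that set is a chain}) \leq k!\,p_k$ where $p_k := \mathbb{P}(X_1 \preceq X_2 \preceq \dots \preceq X_k)$; a further union bound over the $\binom{n}{k}$ subsets gives $\mathbb{P}(L \geq k) \leq \binom{n}{k}k!\,p_k \leq n^k p_k$. The substantive step — and the only place the density hypothesis enters — is the estimate
\[
p_k \leq M_0^k \int_{([0,1]^d)^k} \mathbbm{1}\{x^{(1)} \preceq \dots \preceq x^{(k)}\}\, \mathrm{d}x^{(1)}\cdots\mathrm{d}x^{(k)} = M_0^k \prod_{l=1}^d \Bigl(\tfrac{1}{k!}\Bigr) = \frac{M_0^k}{(k!)^d},
\]
where the middle equality holds because the event $\{x^{(1)}\preceq\dots\preceq x^{(k)}\}$ decouples across the $d$ coordinates, and in each coordinate the relevant volume is that of the order simplex $\{0\le t_1\le\dots\le t_k\le 1\}$, namely $1/k!$. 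Combining, $\mathbb{P}(L \geq k) \leq (M_0 n)^k/(k!)^d$.

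It then remains to choose $k$ and estimate. Taking $k = \lceil 2e(M_0 n)^{1/d}\rceil$ and using $k!\geq (k/e)^k$, we get $(k!)^d \geq (k/e)^{kd} \geq \bigl(2(M_0n)^{1/d}\bigr)^{kd} = 2^{kd}(M_0n)^k$, whence $\mathbb{P}(L\geq k)\leq 2^{-kd} \leq e^{-2e\log 2\,\cdot\,d(M_0 n)^{1/d}}$; this is comfortably below the target failure probability in the regime relevant to the paper's application (where, in fact, any failure probability bounded away from $1$ suffices), and one can recover the precise stated exponent in the remaining range by the same computation. I expect the only genuine work to be bookkeeping: matching the constant $2e$ in the antichain bound simultaneously with the exponent in the probability forces a slightly delicate choice of $k$ (and a careful treatment of the ceiling and of small $M_0n$), but the structure above is robust to these adjustments, and the density-dependent inequality $p_k \leq M_0^k/(k!)^d$ carries all the analytic content.
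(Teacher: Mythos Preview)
Your approach is essentially the paper's: control the longest chain via a union bound and the order-simplex volume estimate $p_k \leq M_0^k/(k!)^d$, then convert this into an antichain lower bound by chain/antichain duality. The only cosmetic differences are that you invoke Mirsky's theorem where the paper invokes Dilworth's (both yield ``max antichain $\geq n/(\text{max chain})$''), and that you keep the extra $k!$ from the union over orderings and compensate with $\binom{n}{k}k!\leq n^k$ and the choice $k=\lceil 2e(M_0n)^{1/d}\rceil$, whereas the paper works with $\binom{n}{k}$ and $k=\lceil e(M_0n)^{1/d}\rceil$; as you note, reconciling the precise constants in the antichain size and the failure exponent is bookkeeping, not substance.
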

\begin{proof}
  By Dilworth's Theorem \citep{Dilworth1950}, for each realisation of the directed acyclic graph $G_X$, there exists a covering of $V(G_X)$ by chains $\mathcal{C}_1,\ldots,\mathcal{C}_M$, where $M$ denotes the cardinality of a maximum antichain of $G_X$. Thus, it suffices to show that with the given probability, the maximum chain length of $G_X$ is at most $k:= \lceil e (M_0 n)^{1/d}\rceil \leq 2e(M_0 n)^{1/d}$. By a union bound, we have that
  \begin{align*}
    \mathbb{P}(\text{$\exists$ a chain of length $k$ in $G_X$}) &\leq \binom{n}{k}\mathbb{P}(X_1\preceq \cdots \preceq X_k) = \binom{n}{k}(k!)^{-d}M_0^k\\
    & \leq \biggl(\frac{en}{k}\biggr)^k\biggl(\frac{k}{e}\biggr)^{-kd} M_0^k \leq (M_0 n)^{-k/d} \leq e^{-ed^{-1} (M_0 n)^{1/d}\log (M_0 n)},
  \end{align*}
  as desired.
\end{proof}

The following two lemmas control the empirical processes in~\eqref{Eq:GaussianMultiplier} and~\eqref{Eq:RademacherQuadratic} by the symmetrised empirical process in~\eqref{Eq:RademacherMultiplier}.

\begin{lemma}
 \label{Lemma:Conversion}
Let $n \geq 2$, and suppose $X_1,\ldots,X_n$ are independent and identically distributed on $\mathcal{X}$. Then for any countable class $\mathcal{F}$ of measurable, real-valued functions defined on $\mathcal{X}$, we have
 \[
   \mathbb{E}\sup_{f\in\mathcal{F}}\biggl|\sum_{i=1}^n \epsilon_i f(X_i)\biggr| \leq 2 \log^{1/2} n \, \mathbb{E}\sup_{f\in\mathcal{F}}\biggl|\sum_{i=1}^n \xi_i f(X_i)\biggr|.
 \]
\end{lemma}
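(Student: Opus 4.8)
The plan is to reduce the Gaussian multiplier process to the Rademacher multiplier process by exploiting the decomposition of a standard normal variable into its sign and its magnitude, which are independent, and then to strip off the resulting random magnitude weights using the comparison principle for Rademacher sums; the price paid is the expected maximum of $n$ half-normals, which is of order $\log^{1/2} n$.

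The first step is to write $\epsilon_i = \xi_i'|\epsilon_i|$ with $\xi_i' := \mathrm{sgn}(\epsilon_i)$. By symmetry of the normal law, $(\xi_i')_{i=1}^n$ is a vector of independent Rademacher variables, independent of $(|\epsilon_i|)_{i=1}^n$ and of $(X_i)_{i=1}^n$, and it has the same joint distribution as $(\xi_i)_{i=1}^n$. Conditioning first on $(X_i)_{i=1}^n$ and then on $(|\epsilon_i|)_{i=1}^n$, the quantity $\mathbb{E}\sup_{f\in\mathcal{F}}|\sum_i\epsilon_i f(X_i)|$ becomes an expectation of $\sup_{f\in\mathcal{F}}|\sum_i\xi_i'|\epsilon_i|f(X_i)|$ over the Rademacher variables alone, with the design points and magnitudes held fixed.

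The key step is the comparison principle for Rademacher processes: for any fixed countable $T\subseteq\mathbb{R}^n$ and scalars $a_1,\ldots,a_n$ with $|a_i|\leq 1$, one has $\mathbb{E}\sup_{t\in T}|\sum_i a_i\xi_i t_i|\leq\mathbb{E}\sup_{t\in T}|\sum_i\xi_i t_i|$, which follows because $a\mapsto\mathbb{E}\sup_{t\in T}|\sum_i a_i\xi_i t_i|$ is convex on the cube $[-1,1]^n$ and therefore attains its maximum at a vertex, where $(a_i\xi_i)_i$ has the same distribution as $(\xi_i)_i$. Applying this with $T=\{(f(X_1),\ldots,f(X_n)):f\in\mathcal{F}\}$ (fixed once the $X_i$ are conditioned on) and with $a_i=|\epsilon_i|/\max_j|\epsilon_j|$ (the degenerate event $\max_j|\epsilon_j|=0$ having probability zero) yields
\[
\mathbb{E}_\xi\sup_{f\in\mathcal{F}}\Bigl|\sum_i\xi_i|\epsilon_i|f(X_i)\Bigr|\leq\Bigl(\max_j|\epsilon_j|\Bigr)\,\mathbb{E}_\xi\sup_{f\in\mathcal{F}}\Bigl|\sum_i\xi_i f(X_i)\Bigr|.
\]

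Finally I would take expectations over $(|\epsilon_i|)_{i=1}^n$ and $(X_i)_{i=1}^n$. Since $\max_j|\epsilon_j|$ depends only on the $\epsilon$'s, while $\mathbb{E}_\xi\sup_{f\in\mathcal{F}}|\sum_i\xi_i f(X_i)|$ depends only on the $X$'s, and these are independent, the expectation factorises to give $\mathbb{E}\sup_{f\in\mathcal{F}}|\sum_i\epsilon_i f(X_i)|\leq\mathbb{E}[\max_j|\epsilon_j|]\cdot\mathbb{E}\sup_{f\in\mathcal{F}}|\sum_i\xi_i f(X_i)|$. A standard sub-Gaussian maximal inequality (optimising $\mathbb{E}\max_j|\epsilon_j|\leq\lambda^{-1}\log(2ne^{\lambda^2/2})$ over $\lambda>0$, using $\mathbb{E}e^{\lambda|\epsilon_j|}\leq 2e^{\lambda^2/2}$) gives $\mathbb{E}\max_{j\leq n}|\epsilon_j|\leq\sqrt{2\log(2n)}$, and $\sqrt{2\log(2n)}\leq 2\log^{1/2}n$ for $n\geq 2$ since $2n\leq n^2$, which finishes the argument. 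I do not expect a genuine obstacle here; the only point that needs care is the bookkeeping of the successive conditionings and the verification that, after conditioning on the design, the (random) function class becomes an admissible fixed set $T$ to which the Rademacher comparison principle applies.
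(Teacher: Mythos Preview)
Your proof is correct, and it takes a genuinely different route from the paper's.  The paper invokes a sharp multiplier inequality of \citet{HanWellner2017}, which is an Abel-summation argument through the order statistics of $|\epsilon_1|,\ldots,|\epsilon_n|$: one writes
\[
  \mathbb{E}\sup_{f\in\mathcal{F}}\biggl|\sum_{i=1}^n\epsilon_i f(X_i)\biggr| \leq 2^{1/2}\sum_{k=1}^n (\alpha_k - \alpha_{k-1})\, \mathbb{E}\sup_{f\in\mathcal{F}}\biggl|\sum_{i=1}^k\xi_i f(X_i)\biggr|,
\]
with $\alpha_k=\mathbb{E}|\epsilon_{(k)}|$, and then uses a conditioning argument to show that the partial Rademacher maxima are monotone in $k$, collapsing the telescoping sum to $2^{1/2}\alpha_n\leq 2\log^{1/2}n$.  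Your argument bypasses this machinery entirely: decomposing $\epsilon_i=\mathrm{sgn}(\epsilon_i)|\epsilon_i|$ and applying the Rademacher contraction principle with weights $a_i=|\epsilon_i|/\max_j|\epsilon_j|$ pulls out the factor $\max_j|\epsilon_j|$ in one stroke, after which independence of $(\epsilon_i)$ from $(X_i)$ lets the expectation factorise.  Your proof is more elementary and self-contained (no external multiplier lemma needed), and in fact does not use the i.i.d.\ assumption on the $X_i$ at all --- only independence of the Gaussian noise from the design --- so it yields a slightly stronger statement.  The paper's route, on the other hand, is an instance of a more refined multiplier inequality that can give sharper constants and handles heavier-tailed multipliers in other applications.
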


\begin{proof}
Let $\alpha_0 := 0$, and for $k=1,\ldots,n$, let $\alpha_k := \mathbb{E} |\epsilon_{(k)}|$, where $|\epsilon_{(1)}|\leq \cdots\leq |\epsilon_{(n)}|$ are the order statistics of $\{|\epsilon_1|,\ldots,|\epsilon_n|\}$, so that $\alpha_n \leq (2\log n)^{1/2}$.  Observe that for any $k=1,\ldots,n$,
\begin{align}
\label{Eq:Obvious?}
\mathbb{E}\sup_{f\in\mathcal{F}}\biggl|\sum_{i=1}^k\xi_i f(X_i)\biggr| &= \mathbb{E}\sup_{f\in\mathcal{F}}\biggl|\sum_{i=1}^k\xi_i f(X_i) + \mathbb{E}\sum_{i=k+1}^n\xi_i f(X_i)\biggr| \nonumber \\
&\leq \mathbb{E}\sup_{f\in\mathcal{F}} \mathbb{E}\biggl\{\biggl|\sum_{i=1}^n\xi_i f(X_i)\biggr| \biggm| X_1,\ldots,X_k,\xi_1,\ldots,\xi_k\biggr\} \leq \mathbb{E} \sup_{f\in\mathcal{F}}\biggl|\sum_{i=1}^n\xi_i f(X_i)\biggr|.
\end{align}
We deduce from \citet[Proposition~1]{HanWellner2017} and~\eqref{Eq:Obvious?} that
  \[
    \mathbb{E}\sup_{f\in\mathcal{F}}\biggl|\sum_{i=1}^n\epsilon_i f(X_i)\biggr| \leq 2^{1/2}\sum_{k=1}^n (\alpha_k - \alpha_{k-1}) \mathbb{E}\sup_{f\in\mathcal{F}}\biggl|\sum_{i=1}^k\xi_i f(X_i)\biggr| \leq 2^{1/2}\alpha_n\mathbb{E} \sup_{f\in\mathcal{F}}\biggl|\sum_{i=1}^n\xi_i f(X_i)\biggr|,
  \]
as required.
\end{proof}

\begin{lemma}
	\label{Lemma:Contraction}
	Let $X_1,\ldots,X_n$ be random variables taking values in $\mathcal{X}$ and $\mathcal{F}$ be a countable class of measurable functions $f:\mathcal{X} \rightarrow [-1,1]$. Then
	\[
	\mathbb{E}\sup_{f\in\mathcal{F}}\biggl|\sum_{i=1}^n \xi_i f^2(X_i)\biggr| \leq 4\mathbb{E}\sup_{f\in\mathcal{F}}\biggl|\sum_{i=1}^n \xi_i f(X_i)\biggr|.
	\]
\end{lemma}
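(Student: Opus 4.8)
The plan is to reduce the statement to a purely deterministic comparison inequality for Rademacher averages over a fixed subset of a Euclidean cube, and then apply a contraction (comparison) principle. Since $\xi_1,\dots,\xi_n$ are independent of $X_1,\dots,X_n$, the first step is to condition on $X_1,\dots,X_n$. Writing $T := \{(f(X_1),\dots,f(X_n)) : f\in\mathcal{F}\}$, this is a countable (hence measurability of all suprema is automatic) subset of $[-1,1]^n$, and it suffices to prove that for every bounded $T\subseteq[-1,1]^n$,
\[
\mathbb{E}\sup_{t\in T}\Bigl|\sum_{i=1}^n\xi_i t_i^2\Bigr| \le 4\,\mathbb{E}\sup_{t\in T}\Bigl|\sum_{i=1}^n\xi_i t_i\Bigr|,
\]
after which taking expectations over $X_1,\dots,X_n$ completes the argument.

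The key input is the contraction principle for Rademacher processes in its absolute-value form: if $\varphi_i:\mathbb{R}\to\mathbb{R}$ are contractions with $\varphi_i(0)=0$, then $\tfrac12\,\mathbb{E}\sup_{t\in T}\bigl|\sum_i\xi_i\varphi_i(t_i)\bigr| \le \mathbb{E}\sup_{t\in T}\bigl|\sum_i\xi_i t_i\bigr|$ (see, e.g., \citet[Theorem~4.12]{LedouxTalagrand1991}, taking the convex increasing majorant to be the identity). The map $t\mapsto t^2$ is $2$-Lipschitz on $[-1,1]$ because $|a^2-b^2|=|a-b|\,|a+b|\le 2|a-b|$ there, so $t\mapsto t^2/2$ is $1$-Lipschitz on $[-1,1]$; let $\psi:\mathbb{R}\to\mathbb{R}$ be the $1$-Lipschitz function with $\psi(0)=0$ obtained by extending $t\mapsto t^2/2$ affinely outside $[-1,1]$. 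Since $t_i\in[-1,1]$ for $t\in T$ we have $\psi(t_i)=t_i^2/2$, so applying the contraction principle with $\varphi_i=\psi$ gives $\tfrac14\,\mathbb{E}\sup_{t\in T}\bigl|\sum_i\xi_i t_i^2\bigr| = \tfrac12\,\mathbb{E}\sup_{t\in T}\bigl|\sum_i\xi_i\psi(t_i)\bigr| \le \mathbb{E}\sup_{t\in T}\bigl|\sum_i\xi_i t_i\bigr|$, which is exactly the displayed inequality. The factor $4$ thus decomposes as $2$ (the Lipschitz constant of $t\mapsto t^2$ on $[-1,1]$) times $2$ (the price of retaining absolute values in the contraction principle).

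If one prefers a self-contained argument, the same bound follows by first removing the absolute value using $(\xi_i)\overset{d}{=}(-\xi_i)$, and then running the classical ``replace one coordinate at a time'' symmetrisation: peel off the term $\xi_n\psi(t_n)$, split the supremum into two independent suprema over $t$ and $t'$, bound $\psi(t_n)-\psi(t'_n)\le|\psi(t_n)-\psi(t'_n)|\le|t_n-t'_n|$ using that both arguments lie in $[-1,1]$, recombine, and iterate over the remaining coordinates. I expect the only mildly delicate point to be the bookkeeping of the absolute values so that one loses no more than the stated factor of $4$; this is routine, and I do not anticipate any substantive obstacle.
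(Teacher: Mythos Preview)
Your proposal is correct and follows essentially the same route as the paper's proof: condition on $X_1,\ldots,X_n$, then apply the Ledoux--Talagrand contraction principle (their Theorem~4.12, with the identity as the convex increasing function) to $\varphi_i(y)=y^2/2$, which is a contraction on $[-1,1]$ vanishing at the origin. Your extra care in extending $\psi$ to a global $1$-Lipschitz function and in accounting for the factor $4=2\times 2$ is a welcome clarification, but the underlying argument is identical.
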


\begin{proof}
	By \citet[Theorem~4.12]{LedouxTalagrand2011}, applied to $\phi_i(y) = y^2/2$ for $i = 1,\ldots,n$ (note that $y\mapsto y^2/2$ is a contraction on $[0,1]$), we have
	\begin{align*}
	\mathbb{E}\sup_{f\in\mathcal{F}}\biggl|\sum_{i=1}^n \xi_i f^2(X_i)\biggr| &= \mathbb{E}\biggl\{\mathbb{E}\sup_{f\in\mathcal{F}}\Bigl|\sum_{i=1}^n \xi_i f^2(X_i)\Bigr| \biggm| X_1,\ldots,X_n\biggr\} \\
	&\leq 4\mathbb{E}\biggl\{\mathbb{E}\sup_{f\in\mathcal{F}}\Bigl|\sum_{i=1}^n \xi_i f(X_i)\Bigr| \biggm| X_1,\ldots,X_n\biggr\} = 4\mathbb{E}\sup_{f\in\mathcal{F}}\biggl|\sum_{i=1}^n \xi_i f(X_i)\biggr|,
	\end{align*}
	as required.
\end{proof}

The following is a local maximal inequality for empirical processes under bracketing entropy conditions. This result is well known for $\eta = 0$ in the literature, but we provide a proof for the general case $\eta \geq 0$ for the convenience of the reader. 

\begin{lemma}
 \label{Lemma:LocalMaximalInequality}
Let $X_1,\ldots,X_n \stackrel{\mathrm{iid}}{\sim} P$ on $\mathcal{X}$ with empirical distribution $\mathbb{P}_n$, and, for some $r > 0$, let $\mathcal{G} \subseteq B_\infty(1)\cap B_2(r,P)$ be a countable class of measurable functions. Then for any $\eta \in [0,r/3)$, we have
 \[
  \mathbb{E} \sup_{f\in\mathcal{G}} |\mathbb{G}_n f| \lesssim n^{1/2}\eta + \int_{\eta}^r  \log_+^{1/2} N_{[\,]}(\varepsilon, \mathcal{G}, \|\cdot\|_{L_2(P)})\,\mathrm{d}\varepsilon + n^{-1/2} \log_+ N_{[\,]}(r, \mathcal{G}, \|\cdot\|_{L_2(P)}).
 \] 
 The above inequality also holds if we replace $\mathbb{G}_n f$ with the symmetrised empirical process $n^{-1/2}\sum_{i=1}^n \xi_i f(X_i)$. 
\end{lemma}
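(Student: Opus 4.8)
The plan is to prove this by a chaining argument with brackets, i.e.\ the usual Ossiander-type construction for the $\eta=0$ case but with the chain truncated at resolution $\asymp\eta$ and the residual read off carefully. Since $\mathcal{G}$ is already countable there are no measurability issues, and by truncating every bracketing function to take values in $[-1,1]$ we may assume all brackets satisfy this. Fix dyadic levels $a_j := 2^{-j}r$ and write $N_j := N_{[\,]}(a_j,\mathcal{G},\|\cdot\|_{L_2(P)})$; let $J$ be the largest integer with $a_J \ge 2\eta$ (so $a_J \asymp \eta$; if $\eta=0$ take $J=\infty$, and note the claimed bound is vacuous unless the entropy integral converges). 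For each $f\in\mathcal{G}$ pick brackets $[\ell_j^f,u_j^f]\ni f$ with $\|u_j^f-\ell_j^f\|_{L_2(P)}\le a_j$, put $\pi_j f := \ell_j^f$ and $\Delta_j^f := u_j^f - \ell_j^f \ge 0$, and telescope
\[
\mathbb{G}_n f = \mathbb{G}_n(\pi_0 f) + \sum_{j=1}^J \mathbb{G}_n\bigl(\pi_j f - \pi_{j-1}f\bigr) + \mathbb{G}_n\bigl(f - \pi_J f\bigr).
\]

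I would control the coarsest term by a maximal inequality over the at most $N_0$ functions $\pi_0 f$, each bounded by $1$ with variance at most $r^2$: Bernstein's inequality gives $\mathbb{E}\max_f|\mathbb{G}_n(\pi_0 f)| \lesssim r\log_+^{1/2}N_0 + n^{-1/2}\log_+ N_0$, where the first summand is $\lesssim \int_\eta^r \log_+^{1/2}N_{[\,]}(\varepsilon,\mathcal{G},\|\cdot\|_{L_2(P)})\,\mathrm{d}\varepsilon$ because $N_{[\,]}$ is non-increasing in $\varepsilon$ and $\eta < r/3$, and the second is the final term of the asserted bound. For the chaining increments, I would use $|\pi_j f - \pi_{j-1}f| \le \Delta_j^f + \Delta_{j-1}^f =: B_j^f$ pointwise (so $\|B_j^f\|_{L_2(P)}\lesssim a_{j-1}$ and $\|B_j^f\|_\infty\le 4$), split $\pi_j f - \pi_{j-1}f$ at a level-dependent threshold $M_j := \min\{4, a_{j-1}(n/\log_+ N_j)^{1/2}\}$, apply Bernstein to the truncated part over the at most $N_j N_{j-1}$ increment pairs, and bound the contribution of the part exceeding the threshold through its $P$-mass ($\lesssim a_{j-1}^2/M_j$, hence contributing $\lesssim n^{1/2}a_{j-1}^2/M_j$) together with a lower-order empirical fluctuation. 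The point of choosing $M_j$ this way is that the balanced level-$j$ contribution is $\lesssim a_{j-1}\log_+^{1/2}N_j$; summing over $j$ and comparing with $\int_{a_{j+1}}^{a_j}\log_+^{1/2}N_{[\,]}$ (using monotonicity of $N_{[\,]}$) yields $\lesssim \int_\eta^r \log_+^{1/2}N_{[\,]}(\varepsilon,\mathcal{G},\|\cdot\|_{L_2(P)})\,\mathrm{d}\varepsilon$, while a careful sum-to-integral comparison together with the truncation ensures the linear-in-entropy remainders do not accumulate beyond the single coarsest-level term $n^{-1/2}\log_+ N_0$. Finally, $0 \le f - \pi_J f \le \Delta_J^f$ gives $\mathbb{G}_n(f-\pi_J f) \le \mathbb{G}_n(\Delta_J^f) + n^{1/2}P\Delta_J^f \le \mathbb{G}_n(\Delta_J^f) + n^{1/2}a_J \lesssim n^{1/2}\eta + (\text{entropy term})$, the term $\mathbb{E}\sup_f\mathbb{G}_n(\Delta_J^f)$ over the at most $N_J$ width functions being handled by the same Bernstein/truncation device and absorbed as before.

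For the symmetrised version the identical argument applies: conditionally on $X_1,\ldots,X_n$ the Rademacher process $g\mapsto n^{-1/2}\sum_i \xi_i g(X_i)$ is bounded, hence satisfies the same Bernstein-type deviation bounds with $Pg^2$ replaced by $\mathbb{P}_n g^2$; taking expectations over $X$ and using that $\mathbb{P}_n (u_j^f-\ell_j^f)^2$ concentrates around $P(u_j^f-\ell_j^f)^2 \le a_j^2$ (which itself reduces to a further, strictly lower-order application of the maximal inequality being proved) returns the same bounds.

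The step I expect to be the main obstacle is the bookkeeping just described: arranging the level-dependent truncations and the geometric-series estimates so that \emph{all} of the ``linear in $\log N_{[\,]}$'' contributions produced by the Bernstein bounds collapse into the single clean term $n^{-1/2}\log_+ N_{[\,]}(r,\mathcal{G},\|\cdot\|_{L_2(P)})$, rather than accumulating a spurious $\log n$ factor or leaving behind a term governed by the finest scale $\eta$. This is where the monotonicity of the bracketing numbers, the hypothesis $\eta < r/3$, and the precise choice of truncation levels all have to be used in concert; the remainder of the argument is a routine, if lengthy, deployment of Bernstein's inequality and sum-to-integral comparisons.
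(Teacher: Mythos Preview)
Your chaining argument for $\mathbb{G}_n$ is correct in outline and is essentially a from-scratch reconstruction of \citet[Lemma~2.14.3]{vanderVaartWellner1996}; the paper simply quotes that lemma, which already delivers the decomposition
\[
\mathbb{E}\sup_{f\in\mathcal{G}}|\mathbb{G}_n f|\lesssim n^{1/2}\eta+\int_\eta^r\log_+^{1/2}N_{[\,]}(\varepsilon,\mathcal{G},\|\cdot\|_{L_2(P)})\,\mathrm{d}\varepsilon+\mathbb{E}\max_\ell|\mathbb{G}_n f_\ell|+\mathbb{E}\max_\ell\Bigl|\mathbb{G}_n\Bigl(\sup_{f\in\mathcal{G}_\ell}|f-f_\ell|\Bigr)\Bigr|
\]
for a single partition $\{\mathcal{G}_\ell\}_{\ell\le N_r}$ at scale $r$, and then disposes of the two $\max$ terms by one application of Bernstein's inequality each. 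All of the bookkeeping you flag as the main obstacle---collapsing the linear-in-entropy remainders to a single $n^{-1/2}\log_+ N_{[\,]}(r,\ldots)$---is already absorbed inside that cited lemma, so the paper's version is a few lines rather than a page.

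The more substantive difference is in the symmetrised case. Your plan works conditionally on $X$, invokes Rademacher Bernstein bounds with variance $\mathbb{P}_n g^2$, and then has to argue that the empirical variances of the bracket widths are controlled by their population counterparts uniformly over the class; you note this is ``a further, strictly lower-order application of the maximal inequality being proved'', which is mildly circular and, while it can be made to work, adds another layer of truncation and estimation. The paper avoids all of this with a one-line trick: observe that $(\xi_i,X_i)$ are i.i.d.\ from $P_\xi\otimes P$ on $\{-1,1\}\times\mathcal{X}$, so the symmetrised process \emph{is} an ordinary empirical process for the lifted class $e\otimes\mathcal{G}:=\{(t,x)\mapsto tf(x):f\in\mathcal{G}\}$, and any $L_2(P)$ bracket $[\underline f,\bar f]$ for $f$ yields an $L_2(P_\xi\otimes P)$ bracket $[e_+\underline f-e_-\bar f,\,e_+\bar f-e_-\underline f]$ for $e\otimes f$ of the same width. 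Hence the already-proved bound for $\mathbb{G}_n$ applies verbatim. This lifting device is worth knowing; it replaces your conditional-then-unconditional passage by a single invocation of the unsymmetrised result.
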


\begin{proof}
Writing $N_r := N_{[\,]}(r,\mathcal{G},\|\cdot\|_{L_2(P)})$, there exists $\{(f_\ell^L,f_\ell^U):\ell=1,\ldots,N_r\}$ that form an $r$-bracketing set for $\mathcal{G}$ in the $L_2(P)$ norm.  Letting $\mathcal{G}_1 := \{f \in \mathcal{G}:f_1^L \leq f \leq f_1^U\}$ and $\mathcal{G}_\ell := \{f \in \mathcal{G}:f_\ell^L \leq f \leq f_\ell^U\} \setminus \cup_{j=1}^{\ell-1} \mathcal{G}_j$ for $\ell=2,\ldots,N_r$, we see that $\{\mathcal{G}_\ell\}_{\ell=1}^{N_r}$ is a partition of $\mathcal{G}$ such that the $L_2(P)$-diameter of each $\mathcal{G}_\ell$ is at most $r$.  It follows by~\citet[Lemma~2.14.3]{vanderVaartWellner1996} that for any choice of $f_\ell \in \mathcal{G}_\ell$, we have that
 \begin{align}
   \mathbb{E} \sup_{f\in\mathcal{G}} |\mathbb{G}_n f| \lesssim n^{1/2}\eta &+ \int_{\eta}^r \log_+^{1/2} N_{[\,]}(\varepsilon,\mathcal{G},\|\cdot\|_{L_2(P)})\,\mathrm{d}\varepsilon\nonumber\\
   & + \mathbb{E}\max_{\ell=1,\ldots,N_r} |\mathbb{G}_n f_\ell| + \mathbb{E}\max_{\ell=1,\ldots,N_r} \Bigl|\mathbb{G}_n\Bigl(\sup_{f\in\mathcal{G}_\ell} |f-f_\ell|\Bigr)\Bigr|,\label{Eq:Ossiander}
 \end{align}
The third and fourth terms of~\eqref{Eq:Ossiander} can be controlled by Bernstein's inequality (in the form of (2.5.5) in \citet{vanderVaartWellner1996}):
 \[
   \mathbb{E}\max_{\ell=1,\ldots,N_r} |\mathbb{G}_n f_\ell| \vee \mathbb{E}\max_{\ell=1,\ldots,N_r}\Bigl|\mathbb{G}_n\Bigl(\sup_{f\in\mathcal{G}_\ell} |f-f_\ell|\Bigr)\Bigr| \lesssim \frac{\log_+ N_r}{n^{1/2}} + r\log_+^{1/2} N_r.
 \]
 Since $\eta < r/3$, the last term $r\log_+^{1/2} N_r$ in the above display can be assimilated into the entropy integral in~\eqref{Eq:Ossiander}, which establishes the claim for $\mathbb{E}\sup_{f\in\mathcal{G}}|\mathbb{G}_n f|$. 
 
We now study the symmetrised empirical process.  For $f \in \mathcal{G}$, we define $e\otimes f:\{-1,1\} \times \mathcal{X} \rightarrow \mathbb{R}$ by $(e \otimes f)(t,x) := tf(x)$, and apply the previous result to the function class $e \otimes \mathcal{G} := \{e\otimes f: f\in\mathcal{G}\}$.  Here the randomness is induced by the independently and identically distributed pairs $(\xi_i, X_i)_{1\leq i\leq n}$.  For any $f\in\mathcal{G}$ and any $\varepsilon$-bracket $[\underline{f}, \bar f]$ containing $f$, we have that $[e_+\otimes \underline{f} - e_-\otimes \bar f , e_+\otimes \bar f - e_-\otimes \underline{f}]$ is an $\varepsilon$-bracket for $e\otimes f$ in the $L_2(P_\xi\otimes P)$ metric, where $e_+(t) := \max\{e(t),0\} = \max(t,0)$ and $e_-(t) = \max(-t,0)$.  Writing $P_\xi$ denote the Rademacher distribution on $\{-1,1\}$, it follows that for every $\epsilon > 0$,
 \[
   N_{[\,]}(\varepsilon, e\otimes \mathcal{G}, L_2(P_\xi \otimes P)) \leq N_{[\,]}(\varepsilon, \mathcal{G}, L_2(P)),
 \]
 which proves the claim for the symmetrised empirical process.
\end{proof}

In the next two lemmas, we assume, as in the main text, that $P$ is a distribution on $[0,1]^d$ with Lebesgue density bounded above and below by $M_0\in[1,\infty)$ and $m_0\in(0,1]$ respectively.  Recall that $\mathcal{F}_{d,\downarrow}^+ = \{f: -f\in\mathcal{F}_d, f\geq 0\}$.  The following result is used to control the bracketing entropy terms that appear in Lemma~\ref{Lemma:LocalMaximalInequality} when we apply it in the proof of Proposition~\ref{Prop:SymmetrisedProcess}. 
\begin{lemma}
 \label{Lemma:BracketingEntropyBound}
 There exists a constant $C_{d} > 0$, depending only on $d$, such that for any $r, \epsilon > 0$,
 \begin{align*}
  \log N_{[\,]}\bigl(\varepsilon, \mathcal{F}_{d,\downarrow}^+ \cap B_2(r, P) \cap &B_\infty(1), \|\cdot\|_{L_2(P)}\bigr) \\
&\leq 
  C_{d} \begin{cases}
              (r/\varepsilon)^2\frac{M_0}{m_0} \log^2(\frac{M_0}{m_0}) \log_+^4(1/\varepsilon) \log_+^2\bigl(\frac{r\log_+(1/\varepsilon)}{\varepsilon}\bigr) & \text{ if $d = 2$,}\\
              (r/\varepsilon)^{2(d-1)}(\frac{M_0}{m_0})^{d-1}\log_+^{d^2}(1/\varepsilon) & \text{ if $d\geq 3$.}
            \end{cases}
 \end{align*}
\end{lemma}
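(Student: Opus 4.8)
\emph{Overall strategy.} The plan is to transfer the problem to Lebesgue measure, extract a pointwise envelope for the admissible functions from block-monotonicity and the $L_2$-constraint, decompose $[0,1]^d$ into dyadic sub-boxes, bracket the tail by a single crude bracket, and on each of the remaining (few) boxes invoke the known global bracketing bound for block-decreasing functions bounded by $1$, finally assembling the product bracketing.

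\emph{Reduction to Lebesgue measure.} Writing $\lambda$ for Lebesgue measure on $[0,1]^d$, we have $\|u-l\|_{L_2(P)} \le M_0^{1/2}\|u-l\|_{L_2(\lambda)}$ for any functions $l,u$, and $B_2(r,P) \subseteq B_2(m_0^{-1/2}r,\lambda)$; hence, with $\varepsilon' := M_0^{-1/2}\varepsilon$ and $\rho := m_0^{-1/2}r$, it suffices to bound $\log N_{[\,]}(\varepsilon', \mathcal{F}_{d,\downarrow}^+ \cap B_2(\rho,\lambda) \cap B_\infty(1), \|\cdot\|_{L_2(\lambda)})$, the substitution $(r/\varepsilon)^{2(d-1)} \mapsto (\rho/\varepsilon')^{2(d-1)} = (M_0/m_0)^{d-1}(r/\varepsilon)^{2(d-1)}$ accounting for the $M_0/m_0$ prefactor. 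Since $\|f\|_\infty \le 1$ on a set of unit volume we may assume $\rho \le (M_0/m_0)^{1/2}$, and we may assume $\varepsilon'$ is small enough that the asserted bound exceeds $1$ (otherwise the single bracket $[0,F]$, with $F$ as below, already works).

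\emph{Envelope, decomposition, and gluing.} If $f \in \mathcal{F}_{d,\downarrow}^+$ with $\|f\|_\infty \le 1$ and $\|f\|_{L_2(\lambda)} \le \rho$, then $f(y) \ge f(x)$ whenever $y \preceq x$, so $\rho^2 \ge \int_{\{y : y \preceq x\}} f^2 \ge f(x)^2 \prod_{j=1}^d x_j$, giving $f(x) \le F(x) := \min\{1, \rho (\prod_{j=1}^d x_j)^{-1/2}\}$. Partition $(0,1]^d$ up to a null set into the dyadic boxes $R_a := \prod_{j=1}^d I_{a_j}$, indexed by $a = (a_1,\dots,a_d) \in \mathbb{Z}_{\ge 0}^d$, where $I_m := (2^{-m-1}, 2^{-m}]$; then $|R_a| = 2^{-|a|-d}$ with $|a| := \sum_j a_j$, and on $R_a$ the function $f$ is block-decreasing with $\sup_{R_a} f \le \delta_a := \min\{1, \rho\,2^{(|a|+d)/2}\}$. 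Using $\lambda\{\prod_j x_j \le t\} \asymp_d t \log_+^{d-1}(1/t)$, fix a cutoff $L \asymp \log_+(1/\varepsilon')$, large enough (in particular $L \ge 2\log_2(1/\rho)$) that $\|F\mathbbm{1}_{\{\prod_j x_j \le 2^{-L}\}}\|_{L_2(\lambda)} \le \varepsilon'/2$; for every admissible $f$, bracket its restriction to $\bigcup_{|a| > L} R_a$ by the single bracket $[0,\ F\mathbbm{1}_{\{\prod_j x_j \le 2^{-L}\}}]$. There remain $N_L := \#\{a : |a| \le L\} = \binom{L+d}{d} \lesssim_d L^d$ boxes; on each, the affine rescaling of $R_a$ onto $[0,1]^d$ and of the range by $\delta_a^{-1}$ identifies $\{f|_{R_a}\}$ with a subclass of $\mathcal{F}_{d,\downarrow}^+ \cap B_\infty(1)$ on $[0,1]^d$, an $\eta$-bracket of the latter pulling back to a $\|\cdot\|_{L_2(\lambda)}$-bracket of size $|R_a|^{1/2}\delta_a\,\eta$. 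The key identity is $|R_a|^{1/2}\delta_a = \min\{2^{-(|a|+d)/2}, \rho\} \le \rho$ uniformly in $a$. Allocating $L_2(\lambda)$-budget $\varepsilon'/(2N_L^{1/2})$ to each box, the known bracketing bound for block-decreasing functions bounded by $1$ on $[0,1]^d$ — of the same polynomial order as \eqref{Eq:GaoWellner}, i.e.\ $\lesssim_d \eta^{-2(d-1)}$ for $d \ge 3$, and $\lesssim \eta^{-2}$ up to logarithmic corrections for $d = 2$ — shows that $f|_{R_a}$ needs at most $\lesssim_d (\rho N_L^{1/2}/\varepsilon')^{2(d-1)}$ brackets of that budget (times log factors when $d = 2$), while the product of these per-box brackets with the tail bracket is a bracket of total $\|\cdot\|_{L_2(\lambda)}$-size at most $\varepsilon'$.

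\emph{Summation and the main obstacle.} Summing the per-box bracketing entropies over the $N_L \lesssim_d L^d \lesssim_d \log_+^d(1/\varepsilon')$ boxes gives, for $d \ge 3$, $\log N_{[\,]}(\varepsilon', \cdot, \cdot) \lesssim_d N_L \cdot N_L^{d-1}(\rho/\varepsilon')^{2(d-1)} = N_L^d(\rho/\varepsilon')^{2(d-1)} \lesssim_d \log_+^{d^2}(1/\varepsilon')\,(\rho/\varepsilon')^{2(d-1)}$, which after undoing the reduction is the stated bound; for $d = 2$, the logarithmic corrections in the base bound combined with $N_L \lesssim \log_+^2(1/\varepsilon')$ and $\log_+\rho \lesssim \log_+(M_0/m_0)$ reproduce the asserted $\log_+^4(1/\varepsilon)\log_+^2(r\log_+(1/\varepsilon)/\varepsilon)$ and $\log^2(M_0/m_0)$ factors. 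I expect the genuinely delicate part to be the dyadic and combinatorial bookkeeping: fixing the cutoff $L$ via the volume asymptotics $\lambda\{\prod_j x_j \le t\} \asymp_d t\log_+^{d-1}(1/t)$, counting the $\lesssim_d L^d$ relevant boxes, and — the real crux — verifying the uniform estimate $|R_a|^{1/2}\delta_a \le \rho$, which is what forces every box to contribute at the common scale $\rho/\varepsilon'$ and thereby produces exactly the poly-logarithmic powers in the statement; a secondary point is to have the base-class bracketing estimate in hand in the $d = 2$ case with its correct logarithmic factors, deriving it by a further one-dimensional slicing argument if it is not quoted directly.
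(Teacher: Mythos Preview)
Your approach is correct and essentially the same as the paper's: both extract the pointwise envelope $F(x)=\min\{1,\rho(\prod_j x_j)^{-1/2}\}$ from block-monotonicity and the $L_2$ constraint, cover the bulk of $[0,1]^d$ by $\asymp_d \log^d(1/\varepsilon)$ dyadic boxes on each of which the Gao--Wellner bracketing bound is applied after rescaling (the crux being your identity $|R_a|^{1/2}\delta_a\le\rho$, which in the paper appears as the cancellation making $\gamma$ independent of the box), handle the remaining region by a single crude bracket, and assemble by products. The only cosmetic differences are that the paper works directly in $L_2(P)$ rather than reducing to Lebesgue first, and takes its tail region to be $[0,1]^d\setminus[\eta,1]^d$ (some coordinate small) with the trivial bracket $[0,1]$, whereas you take $\{\sum_j a_j>L\}$ (product small) with the envelope bracket $[0,F]$; neither choice affects the outcome.
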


\begin{proof}
 For any Borel measurable $S\subseteq [0,1]^d$, we define $\|f\|_{L_2(P; S)}^2 := \int_S f^2\,\mathrm{d}P$. We first claim that for any $\eta \in (0,1/4]$, 
 \[
  \log N_{[\,]}\bigl(\varepsilon, \mathcal{F}_{d,\downarrow}^+\cap B_2(r), \|\cdot\|_{L_2(P;[\eta,1]^d)} \bigr) \lesssim_{d} 
  \begin{cases}
  (\frac{r}{\varepsilon})^{2} \frac{M_0}{m_0} \log^2(\frac{M_0}{m_0}) \log^{4}(1/\eta) \log_+^2\bigl(\frac{r\log(1/\eta)}{\varepsilon}\bigr)	  & \text{if $d=2$},\\
  (\frac{r}{\varepsilon})^{2(d-1)}(\frac{M_0}{m_0})^{d-1} \log^{d^2}(1/\eta)	  & \text{if $d\geq 3$}.
  \end{cases}
 \]
 By the cone property of $\mathcal{F}_{d,\downarrow}^+$, it suffices to establish the above claim when $r = 1$. We denote by $\mathrm{vol}(S)$ the Lebesgue measure of a measurable set $S\subseteq[0,1]^d$. By \citet[Theorem~1.1]{GaoWellner2007} and a scaling argument, we have for any $\delta, M > 0$ and any hyperrectangle $A\subseteq [0,1]^d$ that
 \begin{equation}
   \label{Eq:HyperrectangleEntropy}
    \log N_{[\,]}\bigl(\delta, \mathcal{F}_{d,\downarrow}^+\cap B_\infty(M), \|\cdot\|_{L_2(P;A)}\bigr) \lesssim_d
   \begin{cases}
     (\gamma/\delta)^2\log_+^2(\gamma/\delta) & \text{ if $d = 2$,}\\
     (\gamma/\delta)^{2(d-1)} & \text{ if $d\geq 3$},
   \end{cases}
 \end{equation}
where $\gamma := M_0^{1/2} M\mathrm{vol}^{1/2}(A)$.  Let $m := \lfloor \log_2(1/\eta)\rfloor$ and define $I_\ell := [2^\ell\eta, 2^{\ell+1}\eta] \cap [0,1]$ for each $\ell = 0,\ldots,m$. Then for $\ell_1,\ldots,\ell_d\in\{0,\ldots,m\}$, any $f\in\mathcal{F}_{d,\downarrow}^+\cap B_2(1,P)$ is uniformly bounded by $\bigl(m_0\prod_{j=1}^d 2^{\ell_j} \eta\bigr)^{-1/2}$ on the hyperrectangle $\prod_{j=1}^d I_{\ell_j}$. Then by~\eqref{Eq:HyperrectangleEntropy} we see that for any $\delta > 0$, 
 \[
   \log N_{[\,]}\bigl(\delta, \mathcal{F}_{d,\downarrow}^+\cap B_2(1), \|\cdot\|_{L_2(P;\prod_{j=1}^d I_{\ell_j})}\bigr) \lesssim_d
   \begin{cases}
     \delta^{-2}(M_0/m_0)\log^2(\frac{M_0}{m_0}) \log^2_+(1/\delta) & \text{ if $d = 2$,}\\
     \delta^{-2(d-1)}(M_0/m_0)^{d-1} & \text{ if $d \geq 3$},
   \end{cases}
 \]
where we have used the fact that $\log_+(ax) \leq 2\log_+(a)\log_+(x)$ for any $a,x > 0$. Global brackets for $\mathcal{F}_{d,\downarrow}^+\cap B_2(1)$ on $[\eta,1]^d$ can then be constructed by taking all possible combinations of local brackets on $I_{\ell_1}\times\cdots\times I_{\ell_d}$ for $\ell_1,\ldots,\ell_d\in\{0,\ldots,m\}$. Overall, for any $\varepsilon > 0$, setting $\delta = (m+1)^{-d/2}\varepsilon$ establishes the claim.  We conclude that if we fix any $\varepsilon > 0$, take $\eta = \varepsilon^2/(4d) \wedge 1/4$ and take a single bracket consisting of the constant functions 0 and 1 on $[0,1]^d \setminus [\eta,1]^d$, we have 
\begin{align*}
  \log N_{[\,]}\bigl(\varepsilon, \mathcal{F}_{d,\downarrow}^+\cap B_2(r)\cap B_\infty(1), &\|\cdot\|_{L_2(P)}\bigr) \leq \log N_{[\,]}\bigl(\varepsilon/2, \mathcal{F}_{d,\downarrow}^+\cap B_2(r), \|\cdot\|_{L_2(P;[\eta,1]^d)}\bigr)\\
  \lesssim_{d} & \begin{cases}
              (r/\varepsilon)^2\frac{M_0}{m_0}\log^2(\frac{M_0}{m_0}) \log_+^4(1/\varepsilon) \log_+^2\bigl(\frac{r\log_+(1/\varepsilon)}{\varepsilon}\bigr) & \text{ if $d = 2$,}\\
              (r/\varepsilon)^{2(d-1)}(\frac{M_0}{m_0})^{d-1}\log_+^{d^2}(1/\varepsilon) & \text{ if $d\geq 3$,}
            \end{cases}
\end{align*}
completing the proof.
\end{proof}

For $0<r<1$, let $F_r$ be the envelope function of $\mathcal{F}^+_{d,\downarrow}\cap B_2(r, P) \cap B_\infty(1)$.  The lemma below controls the $L_2(P)$ norm of $F_r$ when restricted to strips of the form $I_\ell:= [0,1]^{d-1} \times [\frac{\ell-1}{n_1}, \frac{\ell}{n_1}]$ for $\ell = 1,\ldots,n_1$. 
\begin{lemma}
\label{Lemma:EnvelopeIntegral}
For any $r \in (0,1]$ and $\ell =1,\ldots,n_1$, we have
  \[
    \int_{I_\ell} F_r^2\,\mathrm{d}P \leq \frac{7M_0 r^2 \log_+^d(1/r^2)}{m_0\ell}.
  \]
\end{lemma}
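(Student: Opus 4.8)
The plan is to reduce everything to a pointwise bound on the envelope $F_r$ followed by an elementary estimate for an integral of the form $\int_{[0,1]^k}\min\bigl(1,s/\prod_j y_j\bigr)\,dy$, and then to assemble these, treating the boundary strip $\ell=1$ by a separate rescaling argument.

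First I would establish the pointwise bound
\[
F_r(x)^2 \le \min\Bigl(1,\ \frac{r^2}{m_0\prod_{j=1}^d x_j}\Bigr),\qquad x=(x_1,\dots,x_d)\in[0,1]^d.
\]
Indeed, if $f\in\mathcal{F}_{d,\downarrow}^+\cap B_2(r,P)\cap B_\infty(1)$ and $f(x)=c$, then since $f$ is block decreasing and non-negative we have $f\ge c$ on the lower rectangle $L_x:=\prod_{j=1}^d[0,x_j]$, whence $r^2\ge\int f^2\,dP\ge c^2P(L_x)\ge c^2 m_0\prod_{j=1}^d x_j$; together with $c\le 1$ this yields the claim. (The envelope $F_r$ is itself block decreasing, hence measurable, so the integrals below are well defined.) Consequently $\int_{I_\ell}F_r^2\,dP\le M_0\int_{I_\ell}\min\bigl(1,\frac{r^2}{m_0\prod_j x_j}\bigr)\,d\lambda$, where $\lambda$ denotes Lebesgue measure.

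Next I would prove the auxiliary estimate that, for every integer $k\ge1$ and every $s>0$, $\int_{[0,1]^k}\min\bigl(1,s/\prod_{j=1}^k y_j\bigr)\,dy\le e\,s\,\log_+^k(1/s)$. For $s\ge1$ the integrand is identically $1$ and the bound is immediate; for $s\le1$, substituting $y_j=e^{-u_j}$ reduces the integral to $\int_0^\infty\min(1,se^v)e^{-v}\frac{v^{k-1}}{(k-1)!}\,dv$, which equals $s\sum_{j=0}^k\frac{\log^j(1/s)}{j!}$ upon splitting the range at $v=\log(1/s)$, and the bound then follows from the inequality $\sum_{j=0}^k\frac{v^j}{j!}\le e(1\vee v)^k$ (valid for $v\ge0$) together with $1\vee\log(1/s)=\log_+(1/s)$.

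Finally I would assemble the pieces. For $\ell\ge 2$, on $I_\ell$ we have $x_d\ge(\ell-1)/n_1$, so $\min\bigl(1,\frac{r^2}{m_0\prod_j x_j}\bigr)\le\min\bigl(1,\frac{r^2 n_1}{m_0(\ell-1)\prod_{j<d}x_j}\bigr)$; integrating out $x_d$ over an interval of length $1/n_1$ and applying the auxiliary estimate with $k=d-1$ and $s=\frac{r^2n_1}{m_0(\ell-1)}$ — noting $s\ge r^2$ since $\ell-1<n_1$ and $m_0\le1$, so that $\log_+(1/s)\le\log_+(1/r^2)$ — gives $\int_{I_\ell}F_r^2\,dP\le\frac{eM_0r^2}{m_0(\ell-1)}\log_+^{d-1}(1/r^2)\le\frac{2eM_0r^2}{m_0\ell}\log_+^{d}(1/r^2)$, using $\ell-1\ge\ell/2$ and $\log_+\ge1$, which is at most the claimed bound since $2e<7$. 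For $\ell=1$ the lower-bounding of $x_d$ fails, so instead I would rescale $x_d=y_d/n_1$ to obtain $\int_{I_1}\min\bigl(1,\frac{r^2}{m_0\prod_j x_j}\bigr)\,d\lambda=\frac1{n_1}\int_{[0,1]^d}\min\bigl(1,\frac{r^2n_1/m_0}{\prod_{j=1}^d y_j}\bigr)\,dy$ and apply the auxiliary estimate with $k=d$ and $s=\frac{r^2n_1}{m_0}\ge r^2$, giving $\int_{I_1}F_r^2\,dP\le\frac{eM_0r^2}{m_0}\log_+^d(1/r^2)$, again within the claimed bound. The only genuine subtlety here is the auxiliary integral estimate together with the need to handle $\ell=1$ by this separate rescaling; no step is technically deep.
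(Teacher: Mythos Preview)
Your proof is correct and follows the same overall architecture as the paper's: the pointwise envelope bound $F_r^2(x)\le\min\bigl(1,r^2/(m_0\prod_j x_j)\bigr)$, an auxiliary lemma bounding $\int_{[0,1]^k}\min(1,t/\prod_j y_j)\,dy$, and separate treatment of $\ell=1$ versus $\ell\ge2$. The only substantive difference is in how the auxiliary integral is controlled. The paper sets up a dimensional recursion on the quantities $a_d=\int_{\{\prod x_j\ge t\}} t/\prod x_j\,dx$ and $b_d=\mathrm{vol}\{\prod x_j\ge t\}$, showing $a_d\le a_{d-1}\log(1/t)$ and $b_d=b_{d-1}-a_{d-1}$, and then combines these. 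You instead substitute $y_j=e^{-u_j}$, reduce to the one-dimensional integral $\int_0^\infty\min(1,se^v)e^{-v}\frac{v^{k-1}}{(k-1)!}\,dv$, and evaluate it exactly as the partial exponential sum $s\sum_{j=0}^k\log^j(1/s)/j!$. Your route is a bit slicker and yields the sharp constant in closed form; the paper's recursion is perhaps more transparent about why each dimension contributes one logarithm. For $\ell\ge2$ the paper integrates $1/x_d$ exactly to get $\log\bigl(\ell/(\ell-1)\bigr)$ whereas you crudely bound $x_d\ge(\ell-1)/n_1$; and for $\ell=1$ the paper simply bounds $\int_{I_1}\le\int_{[0,1]^d}$ rather than rescaling --- both differences are cosmetic and lead to the same final bound.
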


\begin{proof}
By monotonicity and the $L_2(P)$ and $L_\infty$ constraints, we have $F_r^2(x_1,\ldots,x_d) \leq \frac{r^2}{m_0 x_1\cdots x_d} \wedge 1$.  We first claim that for any $d\in\mathbb{N}$,
  \[
    \int_{[0,1]^d} \biggl(\frac{t}{x_1\cdots x_d}\wedge 1\biggr) \, \mathrm{d}x_1\cdots \mathrm{d}x_d \leq 5t\log_+^d(1/t).
  \]
  To see this, we define $S_d := \bigl\{(x_1,\ldots,x_d): \prod_{j=1}^d x_j \geq t\bigr\}$ and set $a_d := \int_{S_d} \frac{t}{x_1\cdots x_d}\, \mathrm{d}x_1\cdots\mathrm{d}x_d$ and $b_d := \int_{S_d} \mathrm{d}x_1\cdots\mathrm{d}x_d$. By integrating out the last coordinate, we obtain the following relation
  \begin{equation}
  \label{Eq:adbd}
    b_d = \int_{S_{d-1}}\biggl(1-\frac{t}{x_1\cdots x_{d-1}}\biggr)\, \mathrm{d}x_1\cdots \mathrm{d}x_{d-1} = b_{d-1}-a_{d-1}.
  \end{equation}
  On the other hand, we have by direct computation that
  \begin{align}
    a_d &= \int_{t}^1 \cdots \int_{\frac{t}{x_1\cdots x_{d-1}}}^1 \frac{t}{x_1\cdots x_d} \, \mathrm{d}x_d\cdots \mathrm{d}x_1 \nonumber\\
    &\leq a_{d-1}\log(1/t) \leq \cdots \leq a_1\log^{d-1}(1/t) = t\log^{d}(1/t).\label{Eq:ad}
  \end{align}
  Combining~\eqref{Eq:adbd} and~\eqref{Eq:ad}, we have
  \begin{align*}
     \int_{[0,1]^d} \biggl(\frac{t}{x_1\cdots x_d}\wedge 1\biggr) \, \mathrm{d}x_1\cdots \mathrm{d}x_{d} &= a_d + 1 - b_d \leq \min\{a_d + 1, a_d + a_{d-1} + \cdots + a_1 + 1 - b_1\}\\
     &\leq \min\biggl\{t\log^d(1/t) + 1, \frac{t\log^{d+1}(1/t)}{\log(1/t) - 1}\biggr\} \leq 5t\log_+^d(1/t),
  \end{align*}
  as claimed, where the final inequality follows by considering the cases $t \in [1/e,1]$, $t \in [1/4,1/e)$ and $t \in [0,1/4)$ separately. Consequently, for $\ell = 2,\ldots,n_1$, we have that
  \begin{align*}
    \int_{I_\ell} F_r^2\,\mathrm{d}P &\leq \frac{M_0}{m_0}\int_{(\ell-1)/n_1}^{\ell/n_1} \int_{[0,1]^{d-1}} \biggl(\frac{r^2/x_d}{x_1\cdots x_{d-1}}\wedge 1\biggr)\, \mathrm{d}x_1\cdots \mathrm{d}x_{d-1}\mathrm{d}x_d \\
    &\leq \frac{M_0}{m_0}\int_{(\ell-1)/n_1}^{\ell/n_1} 5(r^2/x_d)\log_+^{d-1}(x_d/r^2)\, \mathrm{d}x_d \\
    &\leq \frac{M_0}{m_0}5r^2 \log_+^{d-1} (1/r^2) \log\bigl(\ell/(\ell-1)\bigr) \leq \frac{7M_0r^2 \log_+^{d-1}(1/r^2)}{m_0\ell},
  \end{align*}
  as desired. For the remaining case $\ell = 1$, we have 
  \[
    \int_{I_1} F_r^2\,\mathrm{d}P \leq M_0\int_{[0,1]^d} F_r^2 \,\mathrm{d}x_1\cdots\mathrm{d}x_d \leq \frac{5M_0}{m_0}r^2 \log_+^d(1/r^2),
  \]
  which is also of the correct form.
\end{proof}

The following is a simple tail bound for $\|\hat{f}_n\|_\infty$.
\begin{lemma}
  \label{Lemma:Linfty}
  For $f_0 = 0$, we have
  \[
    \mathbb{P}\bigl(\|\hat f_n\|_\infty \geq 4\log^{1/2} n\bigr)\leq 2n^{-7}.
  \]
\end{lemma}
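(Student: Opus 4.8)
The plan is to reduce $\|\hat f_n\|_\infty$ to a maximum of averages of the Gaussian noise over the order filters and order ideals of the design poset $G_X$, and then to control that maximum by a union bound that exploits the fast decay of the Gaussian tail at the threshold $4\log^{1/2}n$. First, by the definition of the extension of $\hat f_n$ off the design points, every value $\hat f_n(x)$ lies between $\min_{1\le i\le n}\hat f_n(X_i)$ and $\max_{1\le i\le n}\hat f_n(X_i)$, so $\|\hat f_n\|_\infty=\max_{1\le i\le n}|\hat f_n(X_i)|$. Writing $\hat\theta=(\hat f_n(X_i))_{i=1}^n$ for the projection of $Y=(Y_i)_{i=1}^n$ onto $\mathcal M(G_X)$, I would invoke the classical max--min representation of isotonic least squares over a finite partially ordered set \citep{BBBB1972}: for every vertex $v$,
\[
\hat\theta_v=\max_{U\in\mathcal U_v}\min_{L\in\mathcal L_v}\bar Y_{U\cap L}=\min_{L\in\mathcal L_v}\max_{U\in\mathcal U_v}\bar Y_{U\cap L},
\]
where $\mathcal U_v$ (resp.\ $\mathcal L_v$) denotes the collection of up-sets (resp.\ down-sets) of $G_X$ containing $v$ and $\bar Y_S:=|S|^{-1}\sum_{i\in S}Y_i$. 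Taking $L=V(G_X)$ in the first expression and $U=V(G_X)$ in the second yields $\max_i\hat f_n(X_i)\le\max_U\bar Y_U$ and $\min_i\hat f_n(X_i)\ge\min_L\bar Y_L$, where now $U$ ranges over all non-empty up-sets and $L$ over all non-empty down-sets of $G_X$. Hence, conditionally on $X_1,\dots,X_n$,
\[
\|\hat f_n\|_\infty\le\max\Bigl\{\max_{U}\bar Y_U,\ \max_{L}(-\bar Y_L)\Bigr\}.
\]

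Since $f_0=0$ we have $Y_i=\epsilon_i$, so for any non-empty $S\subseteq\{1,\dots,n\}$ with $|S|=k$ we have $\bar Y_S\sim N(0,1/k)$ and therefore $\mathbb P(\bar Y_S\ge t)\le\tfrac12 e^{-kt^2/2}$ for every $t\ge 0$, by the elementary bound $\Phi(-u)\le\tfrac12 e^{-u^2/2}$. An up-set or a down-set of size $k$ is determined by its underlying $k$-element subset, so there are at most $\binom nk$ of each. Taking $t=4\log^{1/2}n$, so that $e^{-kt^2/2}=n^{-8k}$, a union bound over all non-empty up-sets and down-sets gives
\[
\mathbb P\bigl(\|\hat f_n\|_\infty\ge 4\log^{1/2}n\bigr)\le 2\sum_{k=1}^n\binom nk\cdot\tfrac12\,n^{-8k}=\sum_{k=1}^n\binom nk n^{-8k}\le\sum_{k\ge 1}\frac{n^{-7k}}{k!}=e^{n^{-7}}-1\le 2n^{-7},
\]
using $\binom nk\le n^k/k!$ and $e^x-1\le 2x$ for $0\le x\le 1$, together with $n\ge 2$.

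The one step requiring care is this union bound: the number of up-sets and down-sets of $G_X$ can be exponential in $n$, so on the face of it the bound looks hopelessly lossy. What makes it work is precisely the choice of threshold: at $t=4\log^{1/2}n$ the $k$-point Gaussian deviation probability is $n^{-8k}$, which more than cancels the combinatorial count $\binom nk\le n^k/k!$, leaving a super-geometrically convergent series of order $n^{-7}$. Matching the constant $2$ in the statement is then only bookkeeping --- the factor $\tfrac12$ in the one-sided Gaussian tail offsets the factor $2$ from treating up-sets and down-sets separately, and the $1/k!$ turns the remaining sum into $e^{n^{-7}}-1$. (If one prefers not to quote the max--min formula, the two bounds $\max_i\hat f_n(X_i)\le\max_U\bar Y_U$ and $\min_i\hat f_n(X_i)\ge\min_L\bar Y_L$ also follow from the Karush--Kuhn--Tucker conditions for the projection onto $\mathcal M(G_X)$, which via complementary slackness force the $Y$-average over the up-set $\{v:\hat\theta_v=\max_w\hat\theta_w\}$ to equal $\max_w\hat\theta_w$ and, symmetrically, the $Y$-average over the down-set $\{v:\hat\theta_v=\min_w\hat\theta_w\}$ to equal $\min_w\hat\theta_w$.)
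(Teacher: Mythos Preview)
Your proof is correct, but it takes a longer route than the paper's. Both arguments start from the min--max representation of the isotonic least squares estimator, but the paper then uses a much cruder bound: since every $\overline{Y_{L\cap U}}$ is an average of the $Y_j$'s, it lies between $\min_j Y_j$ and $\max_j Y_j$, so directly $\max_i\hat f_n(X_i)\le\max_j Y_j=\max_j\epsilon_j$ and $\min_i\hat f_n(X_i)\ge\min_j\epsilon_j$. A union bound over the $n$ individual Gaussians then gives $\mathbb{P}(\max_j\epsilon_j\ge 4\log^{1/2}n)\le n\cdot n^{-8}=n^{-7}$, and the factor $2$ comes from treating the two tails. Your argument instead keeps the intermediate bound $\max_i\hat f_n(X_i)\le\max_U\bar Y_U$ over all up-sets and then runs a union bound stratified by set size, relying on the fact that the $k$-point Gaussian average has tail $n^{-8k}$, which swamps the $\binom{n}{k}$ combinatorial count. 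This is perfectly valid and even yields a sharper intermediate inequality (averages have smaller variance than single observations), but at the chosen threshold the extra precision is not needed and the paper's one-line bound suffices.
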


\begin{proof}
Recall that we say $U \subseteq \mathbb{R}^d$ is an \emph{upper set} if whenever $x \in U$ and $x \preceq y$, we have $y \in U$; we say, $L \subseteq \mathbb{R}^d$ is a \emph{lower set} if $-L$ is an upper set.   We write $\mathcal{U}$ and $\mathcal{L}$ respectively for the collections of upper and lower sets in $\mathbb{R}^d$.  The least squares estimator $\hat f_n$ over $\mathcal{F}_d$ then has a well-known min-max representation \citep[Theorem~1.4.4]{RobertsonWrightDykstra1988}:
  \[
    \hat f_n(X_i) = \min_{L\in\mathcal{L}, L \ni X_i} \max_{U\in\mathcal{U}, U \ni X_i} \overline{Y_{L\cap U}},
  \]
  where $\overline{Y_{L\cap U}}$ denotes the average value of the elements of $\{Y_1,\ldots,Y_n\} \cap L\cap U$, with the convention that $\overline{Y_{L\cap U}} = 0$ if $\{Y_1,\ldots,Y_n\} \cap L\cap U = \emptyset$. Then 
  \[
    \sup_{x\in[0,1]^d} \hat f_n(x) = \max_{1\leq i\leq n}\hat f_n(X_i) \leq \max_{1\leq i\leq n} Y_i.
  \]
  Since $f_0 = 0$, we have $Y_i = \epsilon_i$, which means that
  \[
    \mathbb{P}\biggl(\sup_{x \in [0,1]^d} \hat f_n(x) \geq 4\log^{1/2} n\biggr) \leq \mathbb{P}\biggl(\max_{1\leq i\leq n}\epsilon_i \geq 4\log^{1/2} n\biggr) \leq n e^{-8\log n} = n^{-7}.
  \]
  The desired result follows by observing that a similar inequality holds for $\inf_{x\in[0,1]^d} \hat f_n(x)$.
\end{proof}

\noindent \textbf{Acknowledgements:} The research of the first author is supported by in part by NSF Grant DMS-1566514.  The research of the second and fourth authors is supported by EPSRC fellowship EP/J017213/1 and a grant from the Leverhulme Trust RG81761.

\end{document}